\title{Local well-posedness of Kolmogorov's two-equation model of turbulence in fractional Sobolev Spaces}
\author{Przemys\l aw Kosewski \footnote{Department of Mathematics and Information Sciences, Warsaw University of Technology, ul. Koszykowa 75,
00-662 Warsaw, Poland, E-mail address: przemyslaw.kosewski.dokt@pw.edu.pl} }
\begin{document}
\maketitle

\begin{abstract}
We study Kolmogorov's two-equation model of turbulence on $d-$dimensional torus. First, the local existence of the solution with the initial data from non-homogeneous fractional Sobolev spaces (Bessel potential spaces) $H^s$ with $s>\frac{d}{2}$ is proven using energy methods. Next, we show that solutions are unique in the class of solutions guaranteed by the local existence theorem.   
\end{abstract}

\vspace{0.3cm}

\no Keywords: Kolmogorov's two-equation model of turbulence, local well-posedness, uniqueness, fractional Sobolev space, Bessel potential.

\vspace{0.2cm}

\no AMS subject classifications (2020): 35Q35, 76F02, 35A01.

\section{Introduction}
In 1941 A. N. Kolmogorov proposed a system of equations describing turbulent motion (see: \cite{Kolmog}, \cite{Spal} for discussion of the proposed system):
\begin{align}
\nabla \cdot v & = 0, \label{divEq} \\
v_t + v \nab v - \divv \left( \bno D v \right) & = - \nabla p , \label{vEq} \\
\om_t + v \nab \om - \divv \left( \bno \nab \om \right) & = - \alpha \om^2, \label{omEq} \\
b_t + v \nab b - \divv \left( \bno \nab b \right) & = - b \om + \bno \dvk , \label{bEq} 
\end{align}
where $v$-mean velocity, $b$-mean turbulent energy, $\omega$-dissipation rate. We equip the system with initial data:
\eqnsl{
v(0,x) = v_0(x), ~~~~
\om(0,x) = \om_0(x), ~~~~
b(0,x) =  b_0(x).
}{init}
Nowadays, the model is not used in engineering practice, however, the ones used share with Kolmogorov's model underlying ideas (which manifest as similar dissipation/source terms included in the system). More information regarding turbulence models (e.g. $k-\varepsilon$, $k-\omega$) can be found in \cite{RANS4}, \cite{RANS7}, \cite{TurbMod}, \cite{WILCOX}.
Recently, the research concerning mathematical analysis of Kolmogorov's model has accelerated. 
In \cite{BuM} authors showed the existence of a weak solution to Kolmogorov's turbulence model. It relies on the introduction of a new variable $E = |v|^2/2 + b$ representing total energy in the system. It allows for the replacement of $b$-equation with an equation depending on $E$, for which passage to the limit with the approximate solution is plausible. 
In \cite{MiNa} authors showed the existence of a weak solution, however, without recovering the equation for $b$. Instead, they obtain inequality.   
In \cite{KoKu} authors showed the existence of the strong, unique solution on some small time interval given initial data from $H^2(\mathbb{T}^3)$. 
In \cite{FaGra} and \cite{FaGra2} authors consider the 1D system constructed based on Kolmogorov's system structure. They showed that solutions exist even for initial data for which the diffusion coefficient vanishes. Also, they proved the existence of a class of smooth initial data, for which finite-time blow-up occurs. 
In \cite{KoKu2} authors showed that provided some smallness condition on initial data global strong solutions exist. 

\noindent
The main purpose of this article is to generalise the statement given in \cite{KoKu} by lowering the regularity requirement for initial data and by considering the $d$-dimensional spacial domain. Before we precisely state the result we need to introduce the notion of the solution to the system (\ref{divEq})-(\ref{init}). 
We say that functions $(v,\om,b)$ solve system (\ref{divEq})-(\ref{bEq}) in the weak sense if 
\begin{align}
\n{\partial_t v, w} + \n{v \cdot \nab v, w} + \left(  \frac{b}{\omega} D v, Dw \right) & = 0 
& & \forall w \in H^1_{\divv} (\td),  
\label{vEqW}\\
\n{\partial_t \om, z} + \n{ v \cdot \nab \om, z}  + \left( \frac{b}{\omega} \nab \om, \nabla z \right) 
& = - \alpha \n{\om^2, z}
& & \forall z \in H^1 (\td), 
\label{omEqW}\\
\n{\partial_t b, q} + \n{v \cdot \nab b, q} + \left( \frac{b}{\omega} \nab b, \nabla q \right) 
& = -  \n{ b  \om, q} + \n{\frac{b}{\omega} |D v |^2, q}
& & \forall q \in H^1 (\td)
\label{bEqW}
\end{align}
for a.a. $t \in (0,T)$. Now we can formulate the main theorem.
\begin{theorem} \label{Th1}
Let $d \in \mathbb{N}_{\geq 2}$, $s > d/2$. Let $(v_0,\om_0,b_0) \in \left( \hs \right)^{d+2}$, such that $\divv v_0 = 0$, $\min_{x \in \td} b_0(x) > 0$ and $\min_{x \in \td} \om_0(x) > 0$. 
Additionally, let time $T>0$ be such that
\eqns{
(1 - 2^{-\beta + 1}) \left( 1 + \nhs{v_0, \om_0, b_0}^2 \right)^{-\beta + 1}
 = (\beta - 1) \int_0^T C(\bmi, \omi, s, \tau) d \tau, 
}
where
$C(\omi, \bmi, s, \tau)$ is rational function finite for $\tau \geq 0$ (see (\ref{timeDepConstant})) and $ \beta = \beta(s)>1$. 
Then, there exists $t^*>T$, such that the system (\ref{vEqW})-(\ref{bEqW}), (\ref{init}) has unique solution $(v,\om,b)$ on $[0,t^*)$ such that 
\begin{align}
v & \in \left( C([0,t^*), H^s_{\divv}(\td)) \cap L^2(0,t^*,H^{s+1}_{\divv}(\td)) \cap W^{1,2}(0,t^*,H^{s-1}_{\divv}(\td)) \right)^{d}, \\
(\om,b) & \in \left( C([0,t^*), \hs) \cap L^2(0,t^*,\hsj) \cap W^{1,2}(0,t^*,H^{s-1}(\td)) \right)^{2}.
\end{align}
\end{theorem}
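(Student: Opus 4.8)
\noindent\emph{Outline of the proof.} The plan is to treat (\ref{divEq})--(\ref{bEq}) as a quasilinear parabolic system, the central difficulty being that the diffusion coefficient $b/\omega$ and the right-hand sides depend on the unknowns, and that $b/\omega$ is meaningful only once $\omega$ is bounded below. I would proceed through three blocks: (i) an a priori energy estimate in $H^s(\td)$ yielding exactly the differential inequality whose integration produces the time $T$ of the statement; (ii) pointwise lower bounds on $\omega$ and $b$ from a comparison principle, which close the loop by bounding $b/\omega$ from above and below; (iii) a frozen-coefficient iteration turning the a priori estimate into a genuine solution, together with a weak-norm stability estimate for uniqueness. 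The tools used repeatedly are: $H^s(\td)$ is a Banach algebra for $s>d/2$; the Kato--Ponce commutator bound $\|[\langle D\rangle^s,g]h\|_{L^2}\le C(\|\nabla g\|_{L^\infty}\|h\|_{H^{s-1}}+\|g\|_{H^s}\|h\|_{L^\infty})$; Moser-type product and composition estimates in $H^s$ (used to bound $b/\omega=b\cdot\omega^{-1}$ and $\omega^{-1}$ in $H^s$ in terms of $\|\omega\|_{H^s}$, $\|b\|_{H^s}$ and a lower bound on $\omega$); and the Lions--Magenes embedding $L^2(0,T;H^{s+1})\cap W^{1,2}(0,T;H^{s-1})\hookrightarrow C([0,T];H^s)$ for the time continuity.

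\textbf{Step 1: the energy estimate.} Suppose $(v,\omega,b)$ is a smooth solution on $[0,T]$ with $\omega,b$ bounded below by positive constants, and set $X(t):=1+\|v(t)\|_{H^s}^2+\|\omega(t)\|_{H^s}^2+\|b(t)\|_{H^s}^2$. Apply $\langle D\rangle^s$ to (\ref{vEq})--(\ref{bEq}), pair with $\langle D\rangle^s$ of the corresponding unknown and integrate over $\td$ (rigorously justified on the approximating sequence below, or after mollification). The pressure term vanishes because $\operatorname{div}v=0$ makes the relevant test field divergence free. The transport terms, after one commutator and using $\operatorname{div}v=0$ to kill the symmetric part, are bounded by $C(\|\nabla v\|_{L^\infty}+\|\nabla\omega\|_{L^\infty}+\|\nabla b\|_{L^\infty})X$. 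Each diffusion term yields the coercive gain $\int_{\td}(b/\omega)\,|\nabla\langle D\rangle^s f|^2\,dx\ge\underline{\mu}\,\|f\|_{\dot{H}^{s+1}}^2$ --- here the \emph{lower} bound on $b/\omega$ is essential --- plus a commutator $\int_{\td}[\langle D\rangle^s,b/\omega]\nabla f\cdot\nabla\langle D\rangle^s f\,dx$ which, by Kato--Ponce and Young, is absorbed up to $\tfrac14\underline{\mu}\|f\|_{\dot{H}^{s+1}}^2$ at the cost of terms $\le C(\|\nabla(b/\omega)\|_{L^\infty}^2+\|b/\omega\|_{H^s}^2\|\nabla f\|_{L^\infty}^2)X$, whose coefficients are controlled by the Moser estimates and the lower bound on $\omega$. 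The zeroth-order sources $-\alpha\omega^2$ and $-b\omega$ contribute $\le CX^{3/2}$ by the algebra property. The one genuinely delicate term is the production term $\int_{\td}\langle D\rangle^s\big((b/\omega)|Dv|^2\big)\,\langle D\rangle^s b\,dx$: since $\bigl\||Dv|^2\bigr\|_{H^s}\le C\|\nabla v\|_{L^\infty}\|\nabla v\|_{H^s}$ sits at the order of the $v$-dissipation, one first absorbs a small multiple of $\|v\|_{H^{s+1}}^2$ and is left with $\le C\,Q(X)\,\|\nabla v\|_{L^\infty}^2$ for a polynomial $Q$; then one interpolates $\|\nabla v\|_{L^\infty}\le C\|v\|_{H^s}^{1-\theta}\|v\|_{H^{s+1}}^{\theta}$ with $\theta=\theta(s,d)\in[0,1)$ ($\theta=0$ being admissible only when $s>d/2+1$), absorbs one more small multiple of $\|v\|_{H^{s+1}}^2$, and retains a strictly higher power of $X$ --- this is the origin of $\beta=\beta(s)>1$. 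Collecting everything, discarding the nonnegative dissipation, and inserting the lower bounds of Step 2 below, one obtains
\[
\frac{d}{dt}X(t)\ \le\ C(\bmi,\omi,s,t)\,X(t)^{\beta},\qquad \beta=\beta(s)>1 .
\]
Integrating this scalar ODE, $X$ stays finite --- in fact $X(t)\le 2X(0)$ --- on $[0,T]$, with $T$ as in the statement; the implicit relation there is precisely the integrated majorant $\int_0^T C(\bmi,\omi,s,\tau)\,d\tau=\big(X(0)^{1-\beta}-(2X(0))^{1-\beta}\big)/(\beta-1)$ with $X(0)=1+\|(v_0,\omega_0,b_0)\|_{H^s}^2$.

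\textbf{Step 2: lower bounds, iteration, regularity.} Evaluating (\ref{omEq}) at a spatial minimum of $\omega$, the transport and diffusion terms are $\ge0$, so $\tfrac{d}{dt}\min_x\omega(t,\cdot)\ge-\alpha\|\omega(t)\|_{L^\infty}\min_x\omega(t,\cdot)$, hence $\min_x\omega(t,\cdot)\ge\omi\,\exp\!\big(-\alpha\int_0^t\|\omega\|_{L^\infty}\big)>0$; likewise, since $(b/\omega)|Dv|^2\ge0$, (\ref{bEq}) gives $\min_x b(t,\cdot)\ge\bmi\,\exp\!\big(-\int_0^t\|\omega\|_{L^\infty}\big)>0$. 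As $\|\omega\|_{L^\infty}\le C\|\omega\|_{H^s}$ is controlled by Step 1, these are strictly positive on $[0,T]$; fed back they bound $b/\omega$ from above and close the a priori estimate, and they are exactly what makes $C(\bmi,\omi,s,\tau)$ in (\ref{timeDepConstant}) depend on $\tau$. To construct a solution I run a frozen-coefficient iteration: given $(\bar v,\bar\omega,\bar b)$ in the ball (with the positivity constraints) fixed by Steps 1--2, solve the three \emph{linear} parabolic problems obtained by freezing the coefficient at $\bar b/\bar\omega$ and the drift at $\bar v\cdot\nabla$, replacing $-\alpha\omega^2$ by $-\alpha\bar\omega\,\omega$ and keeping the nonnegative source $(\bar b/\bar\omega)|D\bar v|^2$ in the $b$-equation --- these choices preserve positivity of the new $\omega$ and $b$ through the maximum principle, and the new $v$ is obtained after projecting onto divergence-free fields, which eliminates the pressure. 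Linear parabolic theory gives a unique solution in the stated regularity class; the estimates of Step 1, being stable under this linearization, show the map sends the ball into itself on $[0,T]$, and an elementary energy estimate on differences shows it contracts in $C([0,T];L^2)\cap L^2(0,T;H^1)$ --- a norm in which the ball is complete --- so its Banach fixed point is the desired solution $(v,\omega,b)$. Finally $(v,\omega,b)\in L^\infty(0,T;H^s)\cap L^2(0,T;H^{s+1})$ and, reading $\partial_t$ off the equations, $\partial_t(v,\omega,b)\in L^2(0,T;H^{s-1})$, so Lions--Magenes upgrades this to $C([0,T];H^s)$ and gives the asserted spaces; since at time $T$ the solution still has finite $H^s$ norm and strictly positive lower bounds for $\omega,b$, the construction restarts, so the maximal time $t^\ast$ satisfies $t^\ast>T$.

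\textbf{Step 3: uniqueness, and the main obstacle.} For uniqueness, take two solutions $(v_i,\omega_i,b_i)$, $i=1,2$, in the stated class with the same data and set $\delta v=v_1-v_2$, $\delta\omega=\omega_1-\omega_2$, $\delta b=b_1-b_2$. Subtracting the equations and pairing with $(\delta v,\delta\omega,\delta b)$ in $L^2(\td)$, using that $b_1/\omega_1-b_2/\omega_2$ is a bounded multiple of $|\delta b|+|\delta\omega|$ (by the lower bounds), that $|Dv_1|^2-|Dv_2|^2=D(v_1+v_2):D\,\delta v$, and the parabolic gains to absorb the top-order differences, one arrives at
\[
\frac{d}{dt}\Big(\|\delta v\|_{L^2}^2+\|\delta\omega\|_{L^2}^2+\|\delta b\|_{L^2}^2\Big)\ \le\ g(t)\Big(\|\delta v\|_{L^2}^2+\|\delta\omega\|_{L^2}^2+\|\delta b\|_{L^2}^2\Big),
\]
with $g\in L^1(0,T)$ --- the time integrability coming from factors such as $\|\nabla v_i\|_{L^\infty}^2\le C\|\nabla v_i\|_{H^s}^2\in L^1(0,T)$ --- so Gr\"onwall forces $\delta v=\delta\omega=\delta b=0$. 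I expect the main obstacle to be precisely the coupling between the quasilinear coefficient and the quadratic production: $b/\omega$ makes sense only with a positive lower bound on $\omega$, which issues from a maximum principle that itself needs an $L^\infty$ --- hence $H^s$ --- bound, i.e.\ the a priori estimate, so this loop must be closed simultaneously for all three equations and kept fully quantitative; meanwhile the source $(b/\omega)|Dv|^2$ in (\ref{bEq}) is of the same differential order as the dissipation and, for $s$ close to $d/2$, cannot be controlled by $H^s$ norms alone but only by spending the parabolic smoothing $v\in L^2(0,T;H^{s+1})$ through interpolation, and it is this that forces the exponent $\beta=\beta(s)>1$ --- rather than a clean $\beta=2$ --- and thereby the implicit definition of $T$.
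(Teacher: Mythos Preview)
Your Steps 1 and 3 --- the $H^s$ energy estimate via Kato--Ponce commutators and Moser composition, the interpolation of $\|\nabla v\|_{L^\infty}$ that forces $\beta(s)>1$, and the $L^2$ stability for uniqueness --- match the paper's argument essentially line by line. The genuine difference is the construction: you run a frozen-coefficient iteration with contraction in a weak norm, whereas the paper (Sections 3.1--3.4) uses a Fourier--Galerkin truncation $\mathcal{S}_n$ together with smooth cutoffs $\Pht,\Pst$ that force the approximate coefficient $\nn=\Pht(\bn)/\Pst(\on)$ to lie in $[\mnit,\infty)$ \emph{regardless} of the sign of $\bn,\on$, and then passes to the limit by Aubin--Lions and removes the cutoffs a posteriori via the maximum principle. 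The virtue of the paper's device is that it completely decouples the energy estimate from positivity: coercivity is built in, so there is no bootstrap loop between your Steps 1 and 2, and the finite-dimensional ODE makes the approximate existence elementary. Your iteration is a legitimate alternative, but you must check at each step that the linear parabolic theory delivers the full $L^2_tH^{s+1}\cap C_tH^s$ regularity with merely $L^\infty_tH^s$ coefficients, and that contraction holds on a possibly shorter interval that you then iterate. One small point: your bound $\min_x\omega\ge\omi\exp(-\alpha\!\int_0^t\|\omega\|_{L^\infty})$ is weaker than needed --- at a spatial minimum $\partial_t\omega\ge-\alpha\omega^2$ gives directly $\omega\ge\omi/(1+\alpha\omi t)$, and a symmetric argument at a maximum gives $\omega\le\oma/(1+\alpha\oma t)$; these solution-independent bounds are what make the paper's $C(\bmi,\omi,s,\tau)$ depend only on the initial data and $\tau$, as the theorem asserts, rather than implicitly on $X$.
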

\begin{theorem} \label{Th1b}
Let $d \in \mathbb{N}_{\geq 2}$, $s > d/2$. Let $(v_0,\om_0,b_0) \in \left( \hs \right)^{d+2}$, such that $\divv v_0 = 0$, $\min_{x \in \td} b_0(x) > 0$ and $\min_{x \in \td} \om_0(x) > 0$. Let $T>0$ and
\begin{align}
v^i & \in \left( C([0,T), H^s_{\divv}(\td)) \cap L^2(0,T,H^{s+1}_{\divv}(\td))  \cap W^{1,2}(0,T,H^{s-1}_{\divv}(\td) \right)^{d}, \\
(\om^i,b^i) & \in \left( C([0,T), \hs) \cap L^2(0,T,\hsj)  \cap W^{1,2}(0,T,H^{s-1}(\td) \right)^{2}
\end{align}
be two solutions to system (\ref{vEqW})-(\ref{bEqW}), (\ref{init}). Then
\eqns{
v^1 = v^2, ~~~~ \om^1 = \om^2, ~~~~ b^1 = b^2 ~~~~\forall (t,x) \in [0,T] \times \Omega.  
}
\end{theorem}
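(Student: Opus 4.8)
The plan is to run an $L^2$ energy estimate for the differences of the two solutions and close it with a Grönwall argument, using the parabolic smoothing $L^2(0,T;H^{s+1})$ to absorb the terms in which a gradient falls on the highest-order unknown. Write $V=v^1-v^2$, $W=\omega^1-\omega^2$, $B=b^1-b^2$, $P=p^1-p^2$; note $\divv V=0$ since $\divv v^1=\divv v^2=0$. Subtracting the weak formulations (\ref{vEqW})--(\ref{bEqW}) for the two solutions gives the system for $(V,W,B)$, in which every difference of a nonlinear term is rearranged using
\[
\frac{b^1}{\omega^1}-\frac{b^2}{\omega^2}=\frac{\omega^2B-b^2W}{\omega^1\omega^2},\qquad
b^1\omega^1-b^2\omega^2=\omega^1B+b^2W,\qquad
|Dv^1|^2-|Dv^2|^2=(Dv^1+Dv^2):DV,
\]
together with the transport splitting $v^1\cdot\nabla f^1-v^2\cdot\nabla f^2=v^1\cdot\nabla F+V\cdot\nabla f^2$ for $f\in\{v,\omega,b\}$. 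Thus each such term is, up to factors of $\omega^i,b^i$ that will be controlled from above and below, at most linear in $(V,W,B)$ times one derivative of one of the six known functions.

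Before estimating I would record the a priori bounds used throughout; fix $T'<T$. Since $s>d/2$ we have $H^s\hookrightarrow L^\infty$, so from $v^i,\omega^i,b^i\in C([0,T'];H^s)$ the norms $\|v^i\|_{L^\infty},\|\omega^i\|_{L^\infty},\|b^i\|_{L^\infty}$ are bounded on $[0,T']$ by some $M$, and from $v^i,\omega^i,b^i\in L^2(0,T';H^{s+1})$ the quantities $\|\nabla v^i\|_{L^\infty},\|\nabla\omega^i\|_{L^\infty},\|\nabla b^i\|_{L^\infty}$ belong to $L^2(0,T')$. I also need uniform lower bounds $\omega^i\ge c_0>0$, $b^i\ge c_0>0$ on $[0,T']$; these follow from a minimum principle for the $\omega$- and $b$-equations, whose reaction terms have the favourable structure ($-\alpha\omega^2$ for $\omega$, and $-b\omega+\tfrac{b}{\omega}|Dv|^2$ with nonnegative source for $b$). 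In particular $\tfrac{b^i}{\omega^i}\ge c_0/M$, so the three diffusion terms are coercive. (If positivity of $\omega,b$ is taken as part of the definition of a solution, this step is empty.)

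I would then test the $V$-, $W$- and $B$-equations with $V$, $W$, $B$ respectively --- admissible since $V(t)\in H^1_{\divv}(\td)$, $W(t),B(t)\in H^1(\td)$, and $\partial_t V,\partial_t W,\partial_t B\in L^2(0,T';H^{s-1})\subset L^2(0,T';L^2)$, so $\langle\partial_t V,V\rangle=\tfrac12\tfrac{d}{dt}\|V\|_{L^2}^2$ and similarly --- and add the three identities. The pressure drops out because $V$ is divergence free, and the transport terms $\int(v^1\cdot\nabla V)\cdot V$, $\int(v^1\cdot\nabla W)W$, $\int(v^1\cdot\nabla B)B$ vanish by $\divv v^1=0$. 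The terms carrying a ``new'' gradient --- namely $\int(\tfrac{b^1}{\omega^1}-\tfrac{b^2}{\omega^2})Dv^2:DV$ and its analogues for $W$ and $B$, and the source term $\int\tfrac{b^1}{\omega^1}(Dv^1+Dv^2):DV\,B$ --- are moved by Young's inequality into $\int\tfrac{b^1}{\omega^1}|DV|^2$, $\int\tfrac{b^1}{\omega^1}|\nabla W|^2$, $\int\tfrac{b^1}{\omega^1}|\nabla B|^2$ at the cost of coefficients of the form $\|\nabla v^i\|_{L^\infty}^2$, $\|\nabla\omega^2\|_{L^\infty}^2$, $\|\nabla b^2\|_{L^\infty}^2$; all remaining terms are bounded by $C(M,c_0)\,y(t)$ with $y(t):=\|V(t)\|_{L^2}^2+\|W(t)\|_{L^2}^2+\|B(t)\|_{L^2}^2$. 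Collecting,
\[
y'(t)\le g(t)\,y(t),\qquad g\in L^1(0,T'),\qquad y(0)=0,
\]
so Grönwall gives $y\equiv0$ on $[0,T']$; since $T'<T$ is arbitrary this yields $v^1=v^2$, $\omega^1=\omega^2$, $b^1=b^2$ on $[0,T)$, and on $[0,T]$ by continuity of the solutions.

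The main obstacle is the source term $\tfrac{b}{\omega}|Dv|^2$ of the $b$-equation. In the difference it contributes $\tfrac{b^1}{\omega^1}(Dv^1+Dv^2):DV$, which is quadratic in the highest unknowns and is \emph{not} controlled by $H^s\hookrightarrow W^{1,\infty}$ in the range $d/2<s\le d/2+1$. The estimate closes only because (i) this term is tested against $B$, never against $V$ or $W$ with a gradient on themselves, so a single Young inequality passes it to the coercive $v$-diffusion $\int\tfrac{b^1}{\omega^1}|DV|^2$ against $\|Dv^i\|_{L^\infty}^2\|B\|_{L^2}^2$, and (ii) the parabolic gain $v^i\in L^2(0,T;H^{s+1})$ makes $\|Dv^i\|_{L^\infty}^2$ only $L^1$ in time --- precisely what the Grönwall coefficient $g$ is allowed to be. The other delicate point is the verification of the lower bounds $\omega^i,b^i\ge c_0$, without which $b^i/\omega^i$ and the coercivity of all three diffusion terms are unavailable; this is exactly where the sign of the reaction terms enters.
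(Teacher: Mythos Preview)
Your proposal is correct and follows essentially the same route as the paper: derive the $L^2$ energy identity for the differences, use the minimum/maximum principle to secure the lower bound on $b^i/\omega^i$ for coercivity, absorb the gradient terms via Young's inequality into the diffusion, and close with Gr\"onwall using that the coefficient $g$ (built from $\|\nabla v^i\|_{L^\infty}^2$, $\|\nabla\omega^i\|_{L^\infty}^2$, $\|\nabla b^i\|_{L^\infty}^2$) is $L^1$ in time thanks to the $L^2(0,T;H^{s+1})$ regularity. Your discussion of the critical source term $\tfrac{b}{\omega}|Dv|^2$ matches the paper's treatment exactly.
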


\section{Preliminaries}
Let us start with recalling the definitions of spaces set on $\td = [0,1)^d$.
\begin{de}[see Remark 3.1.5 in \cite{Ruzh2010} or Section 3.2 and 3.5 in \cite{Beal2013} ]
Let $\{ u_n \}_{n=1}^\infty \subset C^\infty(\td)$, $u \in C^\infty(\td)$. We say that $u_n \rightarrow u$ in $C^\infty(\td)$ if $\partial^\alpha u_n \rightarrow \partial^\alpha u$ uniformly for all $\alpha \in \mathbb{N}_0$.
By $\mathcal{D}^\prime(\td)$ we denote continuous linear functionals on $C^\infty(\td)$. 
\end{de}
\begin{de}[see Definition 3.1.6 in \cite{Ruzh2010}]
Let $S(\mathbb{Z}^d)$ denote the space of rapidly decaying functions $\mathbb{Z}^d \rightarrow \mathbb{C}$. That is, $\varphi \in S(\mathbb{Z}^d)$ if for any $k<\infty$ there exists a constant $C_{\varphi, k}$ such that 
\eqns{
|\varphi(\xi)| \le \frac{C_{\varphi, k}}{\n{1+ |\xi|^2}^{k/2}}.
}
The topology on $S(\mathbb{Z}^d)$ is given by the seminorms $p_k$, where $k \in \mathbb{N}_0$ and \eqns{
p_k(\varphi) = \sup_{\xi \in \mathbb{Z}^d} \n{1 + |\xi|^2}^{\frac{k}{2}} |\varphi(\xi)|.
}
Then, a sequence $\{ \varphi_n \}_{n=1}^\infty \subset S(\mathbb{Z}^d)$ converges to the function $\varphi \in S(\mathbb{Z}^d)$ iff 
\eqns{
p_k(\varphi_n - \varphi) \overset{n \rightarrow \infty }{\longrightarrow} 0 \quad \text{for all } k \in \mathbb{N}_0.
}
By $S^\prime(\mathbb{Z}^d)$ we denote continuous linear functionals on $S(\mathbb{Z}^d)$.
\end{de}
\begin{de}[see Definition 3.1.8 in \cite{Ruzh2010}]
Toroidal Fourier transform $\mathcal{F}_{\td} = (f \mapsto \hat{f}): C^\infty(\td) \rightarrow S(\mathbb{Z}^d)$ is defined by 
\eqns{
\hat{f}(\xi) = \int_{\td} f(x) e^{-2 \pi i x \cdot \xi} dx.
}
Inverse toroidal Fourier transform $\mathcal{F}_{\td}^{-1} = (h \mapsto \widecheck{h}): S(\mathbb{Z}^d) \rightarrow C^\infty(\td) $ is given by 
\eqns{
\widecheck{h}(x) = \sum_{\xi \in \mathbb{Z}^d} h(\xi) e^{ i 2 \pi x \cdot \xi}.
}  
\end{de}
\begin{de}[see Definition 3.1.27 in \cite{Ruzh2010}]
Fourier transform extends to the mapping $\mathcal{F}_{\td}: \mathcal{D}^\prime(\td) \rightarrow S^\prime(\mathbb{Z}^d)$ by the formula
\eqns{
\langle \hat{u}, \varphi \rangle := \langle u, \iota \circ \widecheck{\varphi} \rangle,
}
where $u \in \mathcal{D}^\prime(\td)$, $\varphi \in S(\mathbb{Z}^d)$ and $\iota$ is defined by $(\iota \circ \psi)(x) = \psi(-x)$.
\end{de}
\begin{de}\label{defJs}
Let $s \in \mathbb{C}$. Bessel potential $\ls$ on the torus is defined as follows
\eqns{
(\ls f)(x) = \sum_{k \in \mathbb{Z}^d} \left( 1 + 4 \pi^2 |k|^2 \right)^{s/2} e^{2 \pi i x \cdot k} \hat{f}(k), 
}
where $\hat{f}$ denotes Fourier transform of $f$.
\end{de}
\noindent Now, let us recall the definition of fractional inhomogeneous Sobolev spaces on torus $\hsp$.
\begin{de}[see chapter 3.5.4 in \cite{Schm1987}]
Let $s$ be real number, $p \in (1,\infty)$. The inhomogeneous Sobolev Space $\hsp$ is defined as follows
\eqns{
\hsp = \left\{ u \in D^{'}(\td): \nhsp{u}{s}{p} = \norm{\ls u}_p < \infty
\right\}.
}
\end{de}
\noindent
Also, based on orthogonality of $\{e^{2\pi k x}\}_{k \in \mathbb{Z}^d}$ in $L^2(\td)$ the following characterisation is valid
\eqns{
\hs = \left\{ u \in D^{'}(\td): \norm{u}^2_{H^s_2} = \sum_{k \in \mathbb{Z}^d} \left( 1 + 4 \pi^2 |k|^2 \right)^{s} \left| \hat{u}(k) \right|^2 < \infty
\right\}.
}
Moreover, we introduce the following notation
\eqns{
H^s_{\divv}(\td) = \left\{ u \in [H^s(\td)]^d: \divv u = 0 \right\}.
}
\noindent
To simplify further expressions we introduce the following notation:
\eqns{
\inhs{f}{g} = \n{\ls f, \overline{\ls g}}_{L^2(\td)}.
} 
\noindent
Now, we will recall some know facts regarding fractional Sobolev spaces on the torus.
\begin{lem}[see \cite{Card2019}, \cite{Graf2014b}, \cite{Kato1988}, \cite{Guli1996}] \label{Lem1}
Let $s \geq 0$ and $p\in (1,\infty)$, $p_1, p_2, q_1, q_2 \in (1,\infty]$ such that
\eqns{
\frac{1}{p} = \frac{1}{p_1} + \frac{1}{q_1} = \frac{1}{p_2} + \frac{1}{q_2}.
}
Let $f, g \in C^\infty(\td)$. 
Then there exists $C = C(s,d,p,p_1,p_2,q_1,q_2)$ independent of $f$ and $g$ such that the following inequality holds:
\eqns{
\norm{\ls(f g) }_{p} \le C \left( \norm{\ls f}_{p_1} \norm{g}_{q_1} + \norm{f}_{p_2} \norm{\ls g}_{q_2} \right).
}
\end{lem}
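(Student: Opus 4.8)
\textbf{Proof proposal (Lemma \ref{Lem1}).}
The plan is a Littlewood--Paley/paraproduct decomposition adapted to the torus. First I set up the dyadic apparatus: fix a smooth radial $\psi$ supported in $\{1/2\le|\xi|\le2\}$ with $\sum_{j\ge1}\psi(2^{-j}\xi)=1$ for $|\xi|\ge1$, let $\widehat{\Delta_j u}(k)=\psi(2^{-j}k)\hat u(k)$ for $j\ge1$ and let $\Delta_0$ collect the remaining low frequencies, and put $S_j=\sum_{i\le j}\Delta_i$. The periodic convolution kernels of $\Delta_j,S_j$ have $L^1(\td)$ norms bounded uniformly in $j$, and on the support of $\psi(2^{-j}\cdot)$ the multiplier $(1+4\pi^2|k|^2)^{s/2}$ of $\ls$ is comparable to $2^{js}$ (and $\lesssim1$ on the support of $\Delta_0$), with the rescaled symbols $k\mapsto 2^{-js}(1+4\pi^2|k|^2)^{s/2}\tilde\psi(2^{-j}k)$ satisfying H\"ormander--Mikhlin bounds uniformly in $j$; hence $\ls\Delta_j$ is $2^{js}$ times an operator bounded on $L^r(\td)$, $1<r<\infty$, uniformly in $j$, and by the vector-valued Mikhlin/Marcinkiewicz theorem $\norm{(\sum_j 2^{2js}|\Delta_j u|^2)^{1/2}}_r\approx\norm{\ls u}_r$. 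Throughout I will freely use two classical facts on $\td$ (each transferable from $\mathbb{R}^d$ and contained in the cited references): (i) the Littlewood--Paley equivalence $\norm{u}_r\approx\norm{(\sum_j|\Delta_j u|^2)^{1/2}}_r$ for $1<r<\infty$ together with its lacunary dual form $\norm{\sum_j F_j}_r\lesssim\norm{(\sum_j|F_j|^2)^{1/2}}_r$ whenever $\widehat{F_j}$ is supported in $\{|k|\sim2^j\}$; and (ii) $\sup_j|S_j u|\lesssim\mathcal{M}u$ with the maximal bound $\norm{\mathcal{M}u}_r\lesssim\norm{u}_r$ for $1<r\le\infty$ (the case $r=\infty$ being trivial).

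Next I decompose $fg=T_fg+T_gf+R(f,g)$ with $T_fg=\sum_jS_{j-2}f\,\Delta_jg$, $T_gf=\sum_jS_{j-2}g\,\Delta_jf$ and $R(f,g)=\sum_{|j-j'|\le1}\Delta_jf\,\Delta_{j'}g$, and estimate the three pieces separately. For $T_gf$ each summand has frequency support in $\{|k|\sim2^j\}$, so by the lacunary dual bound and the reduction of $\ls\Delta_j$ to $2^{js}\Delta_j$,
\[
\norm{\ls(T_gf)}_p\lesssim\Big\|\Big(\sum_j2^{2js}|S_{j-2}g|^2|\Delta_jf|^2\Big)^{1/2}\Big\|_p\le\Big\|\big(\sup_j|S_{j-2}g|\big)\Big(\sum_j\big|2^{js}\Delta_jf\big|^2\Big)^{1/2}\Big\|_p,
\]
and then H\"older with exponents $q_1,p_1$, the maximal inequality when $q_1<\infty$ (or $\norm{S_{j-2}g}_\infty\lesssim\norm{g}_\infty$ and $p_1=p$ when $q_1=\infty$), followed by the square-function equivalence, give $\norm{\ls(T_gf)}_p\lesssim\norm{g}_{q_1}\norm{\ls f}_{p_1}$; the term $T_fg$ is symmetric and yields $\norm{f}_{p_2}\norm{\ls g}_{q_2}$. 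For the resonant term each $\Delta_jf\,\Delta_{j'}g$ with $|j-j'|\le1$ has frequency support in $\{|k|\lesssim2^j\}$, so $\ls$ contributes at most $2^{js}$ (this is where $s\ge0$ enters); placing this weight on the $f$-factor, applying Cauchy--Schwarz in $j$ and the two square-function bounds gives $\norm{\ls R(f,g)}_p\lesssim\norm{\ls f}_{p_1}\norm{g}_{q_1}$, the $\Delta_0$-block being handled by plain H\"older since there $\ls$ has no growth. Summing the three contributions yields the inequality; as $f,g\in C^\infty(\td)$ all series converge absolutely, so the manipulations are rigorous and no density step is needed.

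The step I expect to be the main obstacle is the configuration in which $\ls$ is forced onto the factor measured in $L^\infty$, i.e. $p_1=\infty$ (hence $q_1=p$) or $p_2=\infty$: there the crude route above breaks down because the square function is unbounded on $L^\infty(\td)$, so one cannot pass from $\norm{(\sum_j2^{2js}|\Delta_jf|^2)^{1/2}}_{\infty}$ to $\norm{\ls f}_\infty$. In the paraproduct $T_gf$ this is still fine, because the derivative there acts on the high-frequency, $L^p$-integrable factor $f$ while only the low-frequency averages $S_{j-2}g$ of the $L^\infty$ factor appear and can be bounded pointwise by $\norm{g}_\infty$; the genuinely delicate term is the high-high piece $R(f,g)$, and controlling it in this endpoint configuration requires the refined argument (exploiting the precise smoothness of the symbol, or a Coifman--Meyer/bilinear multiplier estimate, in place of the naive weight distribution) carried out in the quoted references \cite{Kato1988}, \cite{Guli1996}, \cite{Graf2014b}, \cite{Card2019}. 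A shorter alternative route, which I would mention, is to invoke the Kato--Ponce inequality on $\mathbb{R}^d$ from those references and transfer it to $\td$ by a standard periodization/transference argument for bilinear Fourier multipliers, applied blockwise so that the regulated dyadic symbols are covered.
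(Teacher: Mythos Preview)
The paper does not give its own proof of this lemma; it is stated with citations to \cite{Card2019}, \cite{Graf2014b}, \cite{Kato1988}, \cite{Guli1996} (the first of these already formulates the multilinear Coifman--Meyer/Kato--Ponce estimate directly on $\td$), and no argument follows. So there is nothing to compare your proposal against within the paper itself.

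That said, your paraproduct/Littlewood--Paley sketch is one of the standard routes to this inequality and is essentially the strategy of \cite{Kato1988} and \cite{Graf2014b}. The treatment of the two paraproducts $T_gf$ and $T_fg$ is correct, and you correctly isolate the genuine difficulty: when the $\ls$ weight must land on the factor measured in $L^\infty$ (i.e.\ $p_1=\infty$ or $p_2=\infty$), the resonant piece $R(f,g)$ cannot be handled by the naive square-function argument, and one needs the sharper bilinear-multiplier machinery of the cited references. Your proposed fallback---transferring the $\mathbb{R}^d$ result to $\td$---is also in the spirit of the Remark following Lemma~\ref{Lem2}, where the paper transfers Lemmas~\ref{Lem1.5} and~\ref{Lem2} via an extension operator $f\mapsto\phi\widetilde f$ and interpolation; that Remark does not explicitly treat Lemma~\ref{Lem1}, presumably because \cite{Card2019} already supplies the periodic statement, but the same extension argument would work here as well.
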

\begin{lem}[see chapter 2.8.3 in \cite{Trie1983}] \label{Lem1.5}
    Let $p \in (1,\infty)$, $s > d/p$ and $f,g \in H^{s}_p(\td)$. Then, $fg \in H^{s}_p(\td)$ and there exists a constant $C>0$, independent of $f$ and $g$ such that
    \eqns{
        \norm{fg}_{H^{s}_p(\td)} \le C \norm{f}_{H^{s}_p(\td)} \norm{g}_{H^{s}_p(\td)}.
    }
\end{lem}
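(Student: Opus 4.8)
The plan is to obtain the algebra property as a direct consequence of the fractional Leibniz inequality of Lemma~\ref{Lem1} together with the Sobolev embedding $H^s_p(\td)\hookrightarrow L^\infty(\td)$, which is precisely where the hypothesis $s>d/p$ enters. First I would record that embedding: for $u\in H^s_p(\td)$ with $s>d/p$ one has $\norm{u}_{L^\infty(\td)}\le C\,\norm{u}_{H^s_p(\td)}$. On the torus this follows by writing $u=\ls^{-1}(\ls u)$, expanding in Fourier series, and estimating
\[
|u(x)|\le\sum_{k\in\mathbb{Z}^d}(1+4\pi^2|k|^2)^{-s/2}\,|\widehat{\ls u}(k)|
\]
by H\"older's inequality, using that $k\mapsto(1+4\pi^2|k|^2)^{-s/2}$ lies in $\ell^{p'}(\mathbb{Z}^d)$ exactly when $sp'>d$, i.e.\ $s>d/p$ (no borderline case arises since $p>1$); alternatively one simply invokes the standard embedding of chapter~2.8.1 in \cite{Trie1983}. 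In particular, for $f,g\in H^s_p(\td)$ the pointwise product $fg$ is a well-defined element of $L^\infty(\td)\subset L^p(\td)\subset\mathcal{D}'(\td)$, so the assertion $fg\in H^s_p(\td)$ is meaningful.

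Next, for $f,g\in C^\infty(\td)$ I would apply Lemma~\ref{Lem1} with the admissible choice $p_1=q_2=p$, $q_1=p_2=\infty$ (which satisfies $\tfrac1p=\tfrac1{p_1}+\tfrac1{q_1}=\tfrac1{p_2}+\tfrac1{q_2}$ and $p_1,p_2,q_1,q_2\in(1,\infty]$), obtaining
\[
\norm{fg}_{H^s_p(\td)}=\norm{\ls(fg)}_p\le C\left(\norm{\ls f}_p\,\norm{g}_{L^\infty(\td)}+\norm{f}_{L^\infty(\td)}\,\norm{\ls g}_p\right).
\]
Inserting the embedding bounds $\norm{f}_{L^\infty(\td)}\le C\norm{f}_{H^s_p(\td)}$, $\norm{g}_{L^\infty(\td)}\le C\norm{g}_{H^s_p(\td)}$ and recalling that $\norm{\ls f}_p=\norm{f}_{H^s_p(\td)}$ gives
\[
\norm{fg}_{H^s_p(\td)}\le C\,\norm{f}_{H^s_p(\td)}\,\norm{g}_{H^s_p(\td)}
\]
for all $f,g\in C^\infty(\td)$, with $C=C(s,d,p)$ independent of $f,g$.

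Finally I would remove the smoothness assumption by density. Choose $f_n,g_n\in C^\infty(\td)$ with $f_n\to f$ and $g_n\to g$ in $H^s_p(\td)$. Applying the bilinear estimate just proven to $f_ng_n-f_mg_m=(f_n-f_m)g_n+f_m(g_n-g_m)$ shows that $(f_ng_n)$ is Cauchy in $H^s_p(\td)$, hence converges to some $h\in H^s_p(\td)$; on the other hand $\norm{f_ng_n-fg}_p\le\norm{f_n}_{L^\infty(\td)}\norm{g_n-g}_p+\norm{g}_{L^\infty(\td)}\norm{f_n-f}_p\to0$ by the embedding and $H^s_p(\td)\hookrightarrow L^p(\td)$, so $h=fg$. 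Passing to the limit in the estimate for $(f_n,g_n)$ then yields the claim. The only genuinely delicate ingredient is the embedding $H^s_p(\td)\hookrightarrow L^\infty(\td)$ for $s>d/p$; the remainder is a mechanical choice of exponents in Lemma~\ref{Lem1} followed by a routine approximation argument.
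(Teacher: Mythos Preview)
Your approach is correct and differs from the paper's. The paper does not give a self-contained argument for Lemma~\ref{Lem1.5}; it cites Triebel for the $\mathbb{R}^d$ statement and, in the Remark following Lemma~\ref{Lem7}, indicates that the torus version follows via the extension operator $f\mapsto \phi\widetilde{f}$ and the norm equivalence~(\ref{rem2Eq2}). Your route stays entirely on the torus: apply Lemma~\ref{Lem1} with $p_1=q_2=p$, $q_1=p_2=\infty$, combine with $H^s_p(\td)\hookrightarrow L^\infty(\td)$, and close by density. This is arguably cleaner within the paper's framework, since it uses only tools already stated on $\td$ and makes transparent that Lemma~\ref{Lem1.5} is a corollary of Lemma~\ref{Lem1} once the embedding is available.

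One caution about your Fourier-series sketch of the embedding: the claimed equivalence ``$sp'>d$, i.e.\ $s>d/p$'' is wrong (it gives $s>d/p'$), and more to the point, bounding $\sum_k(1+4\pi^2|k|^2)^{-s/2}|\widehat{J^s u}(k)|$ by H\"older requires $\widehat{J^s u}$ in some $\ell^q$, which from $J^s u\in L^p$ one only gets via Hausdorff--Young for $p\le 2$ (and then in $\ell^{p'}$, forcing the weight into $\ell^p$, i.e.\ $sp>d$). So the direct Fourier argument works only for $p\in(1,2]$. Since you also invoke the standard embedding from Triebel as an alternative, the overall proof is unaffected; but for general $p\in(1,\infty)$ you should simply cite the embedding rather than sketch it.
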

\begin{lem} [See Lemma 2.5(ii) in \cite{Cira2019}] \label{Lem6}
Let $s > \frac{d}{2}$ and $f \in \hs $. Then, function $f$ is continuous and there exists constant $C=C(s,d)$ independent of $f$ such that 
\eqns{
\nif{f} \le C \nhs{f}.
}
\end{lem}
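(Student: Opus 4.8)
The plan is to prove the estimate directly from the Fourier-series characterisation of $\hs$ recalled above, using nothing more than the Cauchy--Schwarz inequality. First I would expand $f$ into its toroidal Fourier series, $f(x) = \sum_{k \in \mathbb{Z}^d} \hat f(k)\, e^{2\pi i x \cdot k}$, and factor each coefficient as $|\hat f(k)| = (1+4\pi^2|k|^2)^{-s/2} \cdot (1+4\pi^2|k|^2)^{s/2}|\hat f(k)|$. Applying Cauchy--Schwarz in $\ell^2(\mathbb{Z}^d)$ then gives
\[
\sum_{k \in \mathbb{Z}^d} |\hat f(k)| \le \left( \sum_{k \in \mathbb{Z}^d} \left( 1 + 4\pi^2|k|^2 \right)^{-s} \right)^{1/2} \left( \sum_{k \in \mathbb{Z}^d} \left( 1 + 4\pi^2|k|^2 \right)^{s} |\hat f(k)|^2 \right)^{1/2},
\]
and the second factor on the right is exactly $\nhs{f}$ by the characterisation of $\hs$ stated above.

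The key step --- and the only one that uses the hypothesis $s > d/2$ --- is verifying that $A_s := \sum_{k \in \mathbb{Z}^d}(1+4\pi^2|k|^2)^{-s}$ is finite. I would do this by comparing the sum with the integral $\int_{\mathbb{R}^d}(1+4\pi^2|\xi|^2)^{-s}\,d\xi$: grouping the lattice points into dyadic shells $\{ 2^j \le |k| < 2^{j+1} \}$, each of which contains at most $c_d\, 2^{jd}$ points and on which the summand is comparable to $2^{-2js}$, reduces the question to the convergence of the geometric series $\sum_j 2^{j(d-2s)}$, which holds precisely when $2s > d$ (the $k=0$ term contributes the finite amount $1$). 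Setting $C = C(s,d) := A_s^{1/2}$ then yields $\sum_k |\hat f(k)| \le C\, \nhs{f} < \infty$, a constant depending only on $s$ and $d$.

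Finally, since $\sum_k |\hat f(k)|$ is finite, the Fourier series of $f$ converges absolutely and uniformly on $\td$; its partial sums are trigonometric polynomials, hence continuous, so their uniform limit is a continuous function, with which $f$ coincides. The estimate then follows from
\[
\nif{f} \le \sum_{k \in \mathbb{Z}^d} |\hat f(k)| \le C\, \nhs{f}.
\]
I do not expect a genuine obstacle here: the whole argument is Cauchy--Schwarz together with the summability of $A_s$, and the exponent $d/2$ is sharp precisely because it is the threshold at which that series converges.
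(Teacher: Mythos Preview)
Your argument is correct and is in fact the standard proof of this Sobolev embedding on the torus. Note, however, that the paper does not give its own proof of this lemma: it simply cites Lemma~2.5(ii) in \cite{Cira2019}. So there is nothing to compare against beyond observing that your Cauchy--Schwarz computation, together with the summability of $\sum_{k\in\mathbb{Z}^d}(1+4\pi^2|k|^2)^{-s}$ for $s>d/2$, is exactly the argument one would expect the cited reference to contain.
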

\begin{lem}[see Theorem 5.5 in \cite{Beny2020} or Section 3.1 in \cite{Adam1991}] \label{Lem2}
Let $s > \frac{d}{2}$. Assume that $G$ is a smooth function on $\mathbb{R}$ with $G(0) = 0$. Then there exists $C$ independent of $f$ and $G$ such that:
\eqns{
\nhs{G(f)} \le C \ncs{G^{\prime}} \left( 1 + \nif{f} \right)^{\lceil s \rceil} \nhs{f}.
} 
\end{lem}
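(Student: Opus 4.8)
\medskip
\noindent\textit{Proof proposal.} The plan is: reduce to smooth $f$ by density, differentiate $G(f)$ with the Fa\`a di Bruno formula, and estimate the resulting products by H\"older's inequality together with (fractional) Gagliardo--Nirenberg interpolation, the non--integer part of $s$ being carried by the Slobodeckij seminorm. The embedding $\hs\hookrightarrow L^\infty(\td)$ of Lemma~\ref{Lem6} --- which is exactly the content of $s>d/2$ --- is what makes the compositions $G^{(j)}(f)$ and the interpolation inequalities meaningful, so it is used throughout.

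Since $C^\infty(\td)$ is dense in $\hs$ and $G$ is Lipschitz on bounded intervals, it suffices to prove the estimate for $f\in C^\infty(\td)$; the general case follows by approximating $f$ in $\hs$ (hence, by Lemma~\ref{Lem6}, also uniformly) and passing to the limit, the right--hand side being continuous along the approximating sequence. Write $m:=\lceil s\rceil$, $\sigma:=s-\lfloor s\rfloor$. For $s\in\mathbb N$ use $\nhs{u}^2\simeq\sum_{|\alpha|\le s}\|\partial^\alpha u\|_{L^2(\td)}^2$; for $s\notin\mathbb N$ use in addition the seminorms $[\partial^\alpha u]_\sigma^2:=\iint_{\td\times\td}|\partial^\alpha u(x)-\partial^\alpha u(y)|^2\,|x-y|^{-d-2\sigma}\,dx\,dy$, $|\alpha|=\lfloor s\rfloor$. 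By the Fa\`a di Bruno formula, $\partial^\alpha(G(f))$ is a finite linear combination (coefficients depending only on $\alpha$) of terms
\[
T=G^{(j)}(f)\,\partial^{\gamma_1}f\cdots\partial^{\gamma_j}f,\qquad \gamma_i\ne 0,\quad \sum_{i=1}^j|\gamma_i|=|\alpha|,\quad 1\le j\le|\alpha| ,
\]
and since $j\le|\alpha|\le m$ one has $\|G^{(j)}(f)\|_{L^\infty}\le\sup_{|y|\le\nif{f}}|G^{(j)}(y)|\le\ncs{G'}$.

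For the $L^2$ contributions with $|\alpha|=k\ge1$, H\"older's inequality gives $\|T\|_{L^2}\le\|G^{(j)}(f)\|_{L^\infty}\prod_i\|\partial^{\gamma_i}f\|_{L^{2k/|\gamma_i|}}$ (note $\sum_i\tfrac{|\gamma_i|}{2k}=\tfrac12$), and the scale--invariant Gagliardo--Nirenberg inequality yields $\|\partial^{\gamma_i}f\|_{L^{2k/|\gamma_i|}}\le C\,\nif{f}^{\,1-|\gamma_i|/k}\,\|f\|_{H^k(\td)}^{|\gamma_i|/k}\le C\,\nif{f}^{\,1-|\gamma_i|/k}\,\nhs{f}^{|\gamma_i|/k}$ (using $k\le s$); since $\sum_i|\gamma_i|/k=1$ the product collapses to $\|T\|_{L^2}\le C\,\ncs{G'}\,\nif{f}^{\,j-1}\,\nhs{f}\le C\,\ncs{G'}\,(1+\nif{f})^m\,\nhs{f}$, and the term $|\alpha|=0$ is handled by $G(0)=0$: $\|G(f)\|_{L^2}\le\ncs{G'}\|f\|_{L^2}$. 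For the fractional contributions ($s\notin\mathbb N$, $|\alpha|=\lfloor s\rfloor=k$) one uses a fractional product rule for the Slobodeckij seminorm, the analogue of Lemma~\ref{Lem1},
\[
[\,g_0\cdots g_j\,]_\sigma\le C\sum_{a=0}^{j}[\,g_a\,]_{\sigma,r_a}\prod_{b\ne a}\|g_b\|_{L^{t_b}},\qquad \frac1{r_a}+\sum_{b\ne a}\frac1{t_b}=\frac12 ,
\]
(obtained by telescoping the product inside the double integral and applying H\"older), with $g_0=G^{(j)}(f)$, $g_i=\partial^{\gamma_i}f$. A factor $\partial^\gamma f$ in an $L^t$-- or $(\sigma,r)$--Slobodeckij norm is estimated by the corresponding (fractional) Gagliardo--Nirenberg inequality; the factor $G^{(j)}(f)$ is either put in $L^\infty$ ($\le\ncs{G'}$) or, when it carries the seminorm, bounded by $[\,G^{(j)}(f)\,]_{\sigma,r}\le\|G^{(j+1)}\|_{L^\infty}[\,f\,]_{\sigma,r}\le\ncs{G'}[\,f\,]_{\sigma,r}$ (here $j+1\le\lfloor s\rfloor+1\le m$). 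The Lebesgue exponents are then \emph{forced} by scale invariance: $\tfrac1{t_i}=\tfrac{|\gamma_i|}{2s}$ and $\tfrac1{r_a}=\tfrac{|\gamma_a|+\sigma}{2s}$ on the factor carrying the seminorm (with the convention $|\gamma_0|:=0$), so the H\"older identity holds because $\sum_i|\gamma_i|+\sigma=k+\sigma=s$; with this choice the powers of $\nhs{f}$ sum to exactly $1$ and those of $\nif{f}$ to $j-1\le m-1$, so that $[\,T\,]_\sigma\le C\,\ncs{G'}\,(1+\nif{f})^m\,\nhs{f}$.

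Summing the finitely many terms gives the estimate for smooth $f$, and the density step completes the proof. The one genuinely non--routine ingredient is the fractional step: it rests on the scale--invariant fractional Gagliardo--Nirenberg inequalities on $\td$ (interpolating $W^{\sigma,r}$--seminorms between $L^\infty$ and $\hs$ at the endpoint exponent) and on the fractional Leibniz rule for the Slobodeckij seminorm, both standard but technical, and it is here that $s>d/2$ is used --- it is precisely what places $f$, $\partial^\gamma f$ and $G^{(j)}(f)$ in the spaces to which these inequalities apply. (Alternatively one may induct on $\lceil s\rceil$ via $\nabla(G(f))=G'(f)\nabla f$, the fractional Leibniz rule of Lemma~\ref{Lem1} and Lemma~\ref{Lem6}, which is the route of the cited references, at the price of more bookkeeping with intermediate Lebesgue exponents.)
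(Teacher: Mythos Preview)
Your argument is correct in substance; the Fa\`a di Bruno / H\"older / Gagliardo--Nirenberg strategy is the standard way to obtain Moser--type composition bounds, and your bookkeeping with the Slobodeckij seminorm for the fractional part is sound (a minor imprecision: when the factor $G^{(j)}(f)$ itself carries the seminorm, the power of $\nif{f}$ you collect is $j$, not $j-1$; since $j\le\lfloor s\rfloor\le\lceil s\rceil$ this is harmless for the final bound).

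However, your route is genuinely different from the paper's. The paper does \emph{not} reprove the estimate from scratch on $\td$: it takes the $\mathbb{R}^d$ version of the lemma from the cited references as a black box and transfers it to the torus via the extension operator $f\mapsto\phi\widetilde{f}$ (periodic extension multiplied by a smooth cutoff), using the norm equivalence $\norm{f}_{H^s(\td)}\simeq\norm{\phi\widetilde{f}}_{H^s(\mathbb{R}^d)}$ and the identity $\widetilde{G(f)}=G(\widetilde{f})$, together with the algebra property (Lemma~\ref{Lem1.5}) to localise with the cutoff $\psi$. Your approach is more self--contained and shows explicitly how each factor in the constant arises; the paper's approach is much shorter, but outsources the real analytic content (precisely your fractional Gagliardo--Nirenberg and Slobodeckij--Leibniz steps) to the $\mathbb{R}^d$ literature. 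Both lead to the same inequality with the same constant structure.
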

\begin{lem} \label{Lem2.5}
Let $s > \frac{d}{2}$. Assume that $G$ is a smooth function on $\mathbb{R}$ with $G^\prime(0) = 0$. Then there exists $C$ independent of $u$, $v$ and $G$ such that:
\eqns{
\nhs{G(u) - G(v)} \le 
C \ncs{G^{\prime \prime}} \nhs{u - v} 
\left( 1 + \nhs{u} + \nhs{v} \right)^{\lceil s \rceil + 1}.
} 
\end{lem}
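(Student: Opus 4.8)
The plan is to reduce the statement to an application of Lemma \ref{Lem2} (the composition estimate for functions vanishing at $0$) applied to a suitable auxiliary function, combined with the algebra/product estimate of Lemma \ref{Lem1.5}. First I would write the difference $G(u) - G(v)$ using the fundamental theorem of calculus along the segment joining $v$ to $u$:
\eqns{
G(u) - G(v) = (u-v)\int_0^1 G^\prime\big( v + \theta(u-v)\big)\, d\theta = (u-v)\, H(u,v),
}
where $H(u,v) := \int_0^1 G^\prime(v+\theta(u-v))\,d\theta$. Since $H^s(\td)$ is a Banach algebra for $s>d/2$ (Lemma \ref{Lem1.5}), it suffices to bound $\nhs{u-v}$ times $\nhs{H(u,v)}$, so the whole problem is to control $\nhs{H(u,v)}$ by $C\ncs{G^{\prime\prime}}\big(1+\nhs{u}+\nhs{v}\big)^{\lceil s\rceil+1}$ — note the linear-in-$\nhs{u-v}$ factor is then automatic, and one extra power in the polynomial (compared to Lemma \ref{Lem2}) is exactly what the statement allows.

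Next I would handle $\nhs{H(u,v)}$. The key point is that $G^\prime(0)=0$, so writing $G^\prime(w) = \int_0^1 G^{\prime\prime}(\theta w)\,d\theta \cdot w =: \widetilde G(w)\, w$ with $\widetilde G$ smooth, we get $G^\prime(v+\theta(u-v)) = \widetilde G(v+\theta(u-v))\cdot(v+\theta(u-v))$. Thus $H(u,v) = \int_0^1 \widetilde G(v+\theta(u-v))(v+\theta(u-v))\,d\theta$. For fixed $\theta$, apply the algebra estimate (Lemma \ref{Lem1.5}) to the product, then Lemma \ref{Lem2} to $\widetilde G(v+\theta(u-v))$ — here $\widetilde G$ need not vanish at $0$, so I would instead apply Lemma \ref{Lem2} to $\widetilde G - \widetilde G(0)$ and absorb the constant $\widetilde G(0)$ (which is bounded by $\ncs{G^{\prime\prime}}$ up to a constant) into a term estimated directly. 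This yields, after using Lemma \ref{Lem6} to replace $\nif{v+\theta(u-v)}$ by $C\nhs{v+\theta(u-v)} \le C(\nhs{u}+\nhs{v})$, a bound of the form
\eqns{
\nhs{H(u,v)} \le C\,\ncs{G^{\prime\prime}}\big(1+\nhs{u}+\nhs{v}\big)^{\lceil s\rceil}\big(\nhs{u}+\nhs{v}\big) \le C\,\ncs{G^{\prime\prime}}\big(1+\nhs{u}+\nhs{v}\big)^{\lceil s\rceil+1},
}
uniformly in $\theta\in[0,1]$, so the $\theta$-integral is harmless. Multiplying by $\nhs{u-v}$ via the algebra property finishes the proof.

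The main obstacle I anticipate is the bookkeeping around the $C^s$ norms: Lemma \ref{Lem2} is stated with $\ncs{G^\prime}$, and here we want the bound in terms of $\ncs{G^{\prime\prime}}$, so I must make sure that the relevant $C^s$ norm of the composand ($\widetilde G$, or rather $\widetilde G - \widetilde G(0)$) is controlled by $\ncs{G^{\prime\prime}}$ — this follows because $\widetilde G(w) = \int_0^1 G^{\prime\prime}(\theta w)\,d\theta$ and each derivative of $\widetilde G$ up to order $\lceil s \rceil$ is an average of derivatives of $G^{\prime\prime}$ up to the same order, hence $\ncs{\widetilde G} \le C\ncs{G^{\prime\prime}}$. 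A secondary technical point is that Lemma \ref{Lem2} requires $G(0)=0$; applying it to $\widetilde G - \widetilde G(0)$ respects this, and the leftover constant multiple of $v+\theta(u-v)$ contributes only a term linear in $\nhs{u}+\nhs{v}$, comfortably inside the claimed polynomial. Everything else — the Banach algebra multiplications, pulling the $\theta$-integral out of the norm by Minkowski's integral inequality — is routine.
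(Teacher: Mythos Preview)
Your proposal is correct and follows essentially the same route as the paper: write $G(u)-G(v)=(u-v)\int_0^1 G'(v+\theta(u-v))\,d\theta$, apply the algebra property (Lemma~\ref{Lem1.5}) to split off $\nhs{u-v}$, pull the $\theta$-integral outside the norm, and control the composition term via Lemma~\ref{Lem2}. The paper is slightly more direct in that it applies Lemma~\ref{Lem2} straight to $G'$ (which already satisfies $G'(0)=0$, the exact hypothesis needed), giving $\nhs{G'(w)}\le C\ncs{G''}(1+\nif{w})^{\lceil s\rceil}\nhs{w}$ immediately and avoiding your auxiliary function $\widetilde G$ altogether.
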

\begin{proof}
The lemma is a direct consequence of Lemma \ref{Lem2}. Proceeding as in \cite{BaCheRa} Corollary 2.66, we see that
\eqns{
G(u) - G(v) = (u - v) \int_0^1 G^\prime(u + \tau (v - u)) d \tau, 
}
which can be understood classically due to $v$, $u$ both being continuous functions (see Lemma \ref{Lem6}). 
By applying $\hs$ norm to the both sides and using Lemma \ref{Lem1.5} we get 
\eqns{
\nhs{G(u) - G(v)} 
\le 
\nhs{u - v} \nhs{ \int_0^1 G^\prime(u + \tau (v - u)) d \tau}.
}   
Next, we may change the order of the norm and integral to get  
\eqns{
\nhs{G(u) - G(v)} 
\le 
\nhs{u - v} \int_0^1 \nhs{G^\prime(u + \tau (v - u))} d \tau.
}
As $G^\prime(0) = 0$ we may apply Lemma \ref{Lem2} to the term under integral
\eqns{
& \nhs{G(u) - G(v)}  \\
& \quad \quad \quad \le 
C \ncs{G^{\prime \prime}} \nhs{u - v} 
\int_0^1 
\left( 1 + \nif{u + \tau (v - u)} \right)^{\lceil s \rceil} \nhs{u + \tau (v - u)}
d \tau.
}
We may estimate the right-hand side using triangle inequality. We get   
\eqns{
& \nhs{G(u) - G(v)}  \\
& \quad \quad \quad \le 
C \ncs{G^{\prime \prime}} \nhs{u - v} 
\left( 1 + \nif{u} + \nif{v} \right)^{\lceil s \rceil} \n{\nhs{u} + \nhs{v}}.
}
By using Lemma \ref{Lem6} we obtain desired inequality.
\end{proof}
\begin{lem}\label{hsNormOfGradient}
Let $f: \td \rightarrow \mathbb{C}$ be such that $f \in H^{s+1}(\td)$. Then $\nhsj{f}^2 = \nhs{\nab f}^2 + \nhs{f}^2 $.
\end{lem}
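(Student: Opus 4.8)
The plan is to reduce everything to the Fourier-side characterisation of the $H^s$ norm recalled above, namely $\nhs{u}^2 = \sum_{k \in \mathbb{Z}^d} \left( 1 + 4\pi^2 |k|^2 \right)^{s} |\hat u(k)|^2$. First I would record how differentiation acts on Fourier coefficients: $\widehat{\partial_j f}(k) = 2\pi i k_j \hat f(k)$ for $j = 1, \dots, d$. Adopting the standard convention that for an $\mathbb{R}^d$-valued function the $H^s$ norm is the $\ell^2$-sum of the component norms, this gives
\[
\nhs{\nab f}^2 = \sum_{j=1}^d \nhs{\partial_j f}^2
= \sum_{k \in \mathbb{Z}^d} \left( 1 + 4\pi^2 |k|^2 \right)^{s} 4\pi^2 \Big( \sum_{j=1}^d k_j^2 \Big) |\hat f(k)|^2
= \sum_{k \in \mathbb{Z}^d} 4\pi^2 |k|^2 \left( 1 + 4\pi^2 |k|^2 \right)^{s} |\hat f(k)|^2,
\]
using $|k|^2 = \sum_{j=1}^d k_j^2$.

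Next I would apply the elementary algebraic identity $\left( 1 + 4\pi^2 |k|^2 \right)^{s+1} = \left( 1 + 4\pi^2 |k|^2 \right)^{s} + 4\pi^2 |k|^2 \left( 1 + 4\pi^2 |k|^2 \right)^{s}$ termwise inside the series defining $\nhsj{f}^2$. Splitting the sum accordingly, the first group of terms is exactly $\nhs{f}^2$ and, by the computation above, the second group is exactly $\nhs{\nab f}^2$; adding the two yields the claimed identity $\nhsj{f}^2 = \nhs{\nab f}^2 + \nhs{f}^2$. Since $f \in H^{s+1}(\td)$ all three series involved converge absolutely (with non-negative terms), so the term-by-term rearrangement needs no further justification.

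There is essentially no obstacle here: the statement is a direct consequence of Plancherel's identity on the torus together with the Fourier-multiplier description of $\nab$. The only point worth stating explicitly is the convention used for the $H^s$ norm of the vector-valued function $\nab f$, namely $\nhs{\nab f}^2 = \sum_{j=1}^d \nhs{\partial_j f}^2$, which is consistent with the componentwise definition of $H^s_{\divv}(\td)$ and of $\left(\hs\right)^{d}$ used throughout the paper.
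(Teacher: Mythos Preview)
Your proposal is correct and follows essentially the same route as the paper: compute $\nhs{\nab f}^2$ via the Fourier-side characterisation using $\widehat{\partial_j f}(k)=2\pi i k_j \hat f(k)$, then apply the algebraic identity $(1+4\pi^2|k|^2)^{s+1}=(1+4\pi^2|k|^2)^{s}+4\pi^2|k|^2(1+4\pi^2|k|^2)^{s}$ to recover $\nhsj{f}^2-\nhs{f}^2$. The only difference is presentational---the paper writes the chain of equalities in one block and ends with $\nhs{\nab f}^2=\nhsj{f}^2-\nhs{f}^2$, while you start from $\nhsj{f}^2$ and split it---and your explicit remarks on the vector-valued norm convention and absolute convergence are welcome but not additional mathematical content.
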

\begin{proof}
By simple calculations we get the statement of the lemma:
\eqns{
\nhs{\nab f}^2 
& = 
\sum_{j=1}^d \sum_{k \in \mathbb{Z}^d} \left( 1 + 4 \pi^2 |k|^2 \right)^{s} \left| \widehat{\n{\frac{\partial f}{\partial x_j}}}(k) \right|^2 \\
& = 
\sum_{j=1}^d \sum_{k \in \mathbb{Z}^d} \left( 1 + 4 \pi^2 |k|^2 \right)^{s} 4 \pi^2 |k_j|^2 \left| \widehat{f}(k) \right|^2 \\
& = 
\sum_{k \in \mathbb{Z}^d} \n{\left( 1 + 4 \pi^2 |k|^2 \right)^{s+1} - \left( 1 + 4 \pi^2 |k|^2 \right)^{s} } \left| \widehat{f}(k) \right|^2 \\
&= \nhsj{f}^2 - \nhs{f}^2.
}
\end{proof}
\begin{lem} [See Lemma 2.5(i) in \cite{Cira2019}] \label{Lem10}
Let $p \in (1,\infty)$ and $\mu, \nu \in \mathbb{R}$ such that $\nu \le \mu$. Then $H^{\mu}_p(\td) \xhookrightarrow{} H^{\nu}_p(\td)$. 
\end{lem}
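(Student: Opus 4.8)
The plan is to reduce the claimed embedding to a single mapping property of the Bessel potential on $L^p(\td)$, and then to invoke a classical Fourier-multiplier (or convolution-kernel) estimate. Set $\sigma := \mu - \nu \ge 0$. By Definition \ref{defJs}, on the Fourier side $\widehat{\mathcal{J}^\rho u}(k) = (1 + 4\pi^2|k|^2)^{\rho/2}\,\hat{u}(k)$ for every $k \in \mathbb{Z}^d$ and every $\rho \in \mathbb{R}$, and since these symbols multiply we have the operator identity $\mathcal{J}^\nu = \mathcal{J}^{-\sigma}\,\mathcal{J}^\mu$ on $\mathcal{D}'(\td)$. Now take $u \in H^\mu_p(\td)$; by definition of the norm, $\mathcal{J}^\mu u \in L^p(\td)$ with $\norm{\mathcal{J}^\mu u}_p = \nhsp{u}{\mu}{p}$. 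If we know that $\mathcal{J}^{-\sigma}$ maps $L^p(\td)$ boundedly into itself, with operator norm $C = C(\sigma,d,p)$, then $\mathcal{J}^\nu u = \mathcal{J}^{-\sigma}(\mathcal{J}^\mu u) \in L^p(\td)$ and
\eqns{
\nhsp{u}{\nu}{p} = \norm{\mathcal{J}^\nu u}_p = \norm{\mathcal{J}^{-\sigma}\big(\mathcal{J}^\mu u\big)}_p \le C \norm{\mathcal{J}^\mu u}_p = C\,\nhsp{u}{\mu}{p},
}
which is exactly the asserted embedding $H^\mu_p(\td) \hookrightarrow H^\nu_p(\td)$. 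So the entire matter reduces to the $L^p(\td)$-boundedness of $\mathcal{J}^{-\sigma}$; no density argument is even needed.

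It remains to show that $\mathcal{J}^{-\sigma} : L^p(\td) \to L^p(\td)$ is bounded for $\sigma \ge 0$ and $p \in (1,\infty)$. For $\sigma = 0$ it is the identity, so assume $\sigma > 0$; I would argue in one of two standard ways. \emph{(i) Multiplier route.} Extend the symbol to $\mathbb{R}^d$ by $m(\xi) := (1 + 4\pi^2|\xi|^2)^{-\sigma/2}$; differentiating shows $m \in C^\infty(\mathbb{R}^d)$ and $|\partial^\alpha_\xi m(\xi)| \le C_\alpha (1 + |\xi|)^{-\sigma - |\alpha|} \le C_\alpha |\xi|^{-|\alpha|}$ for all multi-indices $\alpha$ with $|\alpha| \le \lfloor d/2 \rfloor + 1$, i.e. $m$ satisfies the H\"ormander--Mikhlin condition. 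The periodic (toroidal) version of the Mikhlin multiplier theorem --- see \cite{Ruzh2010} --- then yields that the operator with symbol $m|_{\mathbb{Z}^d}$, which is precisely $\mathcal{J}^{-\sigma}$, is bounded on $L^p(\td)$ for every $p \in (1,\infty)$, with norm depending only on $\sigma, d, p$. \emph{(ii) Kernel route.} Alternatively, $\mathcal{J}^{-\sigma}$ is convolution on $\td$ with the periodization $K_\sigma := \sum_{j \in \mathbb{Z}^d} G_\sigma(\cdot + j)$ of the classical Bessel potential kernel $G_\sigma$ on $\mathbb{R}^d$; since $G_\sigma \ge 0$, $G_\sigma \in L^1(\mathbb{R}^d)$ and $\int_{\mathbb{R}^d} G_\sigma\, dx = 1$, the periodized kernel satisfies $K_\sigma \in L^1(\td)$ with $\norm{K_\sigma}_{L^1(\td)} = 1$, and Young's convolution inequality gives $\norm{\mathcal{J}^{-\sigma} f}_p \le \norm{f}_p$ directly.

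The step I expect to be the real obstacle is exactly this $L^p(\td)$-boundedness of the Bessel multiplier (equivalently, of the Bessel convolution kernel) for $p \ne 2$: it is the only genuinely analytic ingredient, and it rests either on a Calder\'on--Zygmund-type argument (route (i)) or on the explicit $L^1$-integrability of the Bessel potential kernel (route (ii)) --- both classical but non-trivial. For $p = 2$ there is essentially nothing to do: since $0 < m(k) \le 1$ for every $k \in \mathbb{Z}^d$, Parseval's identity gives $\norm{\mathcal{J}^{-\sigma} f}_2 \le \norm{f}_2$ at once, and the desired inequality follows with $C = 1$. This is why in the text the statement is simply cited (as Lemma 2.5(i) in \cite{Cira2019}) rather than reproven, the harmonic-analysis machinery for general $1 < p < \infty$ being standard.
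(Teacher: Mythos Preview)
Your proof is correct. The paper does not prove this lemma at all; it merely cites it from \cite{Cira2019}, so there is no ``paper's own proof'' to compare against. Your reduction to the $L^p(\td)$-boundedness of $\mathcal{J}^{-\sigma}$ via the semigroup identity $\mathcal{J}^\nu=\mathcal{J}^{-\sigma}\mathcal{J}^\mu$ is the standard argument, and both routes you offer for that boundedness are valid: the kernel route (ii) is the more elementary one and even yields the embedding with constant $C=1$, while the multiplier route (i) via the toroidal Mikhlin theorem (available in \cite{Ruzh2010}, already referenced in the paper) is more robust and generalises to symbols not arising from an $L^1$ kernel. Your remark that no density argument is needed is also correct, since the space $H^\mu_p(\td)$ is defined directly by the condition $\mathcal{J}^\mu u\in L^p(\td)$.
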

\begin{lem} [See Lemma 2.5(iii) in \cite{Cira2019}] \label{Lem11}
Let $p,q \in (1,\infty)$ and $\mu, \nu \in \mathbb{R}$ such that $\nu \le \mu$ and  
\eqns{
\mu - \frac{d}{p} = \nu - \frac{d}{q}.
}
Then $H^{\mu}_p(\td) \xhookrightarrow{} H^{\nu}_q(\td)$. 
\end{lem}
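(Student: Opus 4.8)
The plan is to deduce the embedding from a single mapping property: the negative-order Bessel potential $\mathcal{J}^{-\sigma}$ sends $L^p(\td)$ boundedly into $L^q(\td)$ whenever $\sigma = d/p - d/q \ge 0$ and $1 < p \le q < \infty$. This is the torus version of the Hardy--Littlewood--Sobolev inequality, and once it is available the lemma follows at once.

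First I would set $\sigma := \mu - \nu$. The hypothesis $\mu - d/p = \nu - d/q$ forces $\sigma = d/p - d/q$, so in particular $p \le q$ and $\sigma \in [0,d)$ (indeed $\sigma < d/p < d$). If $\sigma = 0$ then $p = q$ and $H^\mu_p(\td) = H^\nu_q(\td)$ with identical norms, so assume $\sigma > 0$. For $u \in H^\mu_p(\td)$ put $f := \mathcal{J}^\mu u \in L^p(\td)$. The operators $\mathcal{J}^t$ (Fourier multipliers $(1 + 4\pi^2|k|^2)^{t/2}$ acting on $\mathcal{D}'(\td)$) form a one-parameter group with $\mathcal{J}^t \mathcal{J}^{t'} = \mathcal{J}^{t + t'}$, so $\mathcal{J}^\nu u = \mathcal{J}^{\nu - \mu} f = \mathcal{J}^{-\sigma} f$ and hence $\|u\|_{H^\nu_q(\td)} = \|\mathcal{J}^{-\sigma} f\|_{L^q(\td)}$. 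Thus the lemma reduces to the bound $\|\mathcal{J}^{-\sigma} f\|_{L^q(\td)} \le C \|f\|_{L^p(\td)}$.

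To prove that bound I would realize $\mathcal{J}^{-\sigma}$ as convolution on $\td$ with the kernel $K_\sigma$ whose $k$-th Fourier coefficient is $(1 + 4\pi^2 |k|^2)^{-\sigma/2}$. By the Poisson summation formula $K_\sigma$ is the $\mathbb{Z}^d$-periodization of the classical Euclidean Bessel kernel $G_\sigma$ on $\mathbb{R}^d$ (the one with $\widehat{G_\sigma}(\xi) = (1 + 4\pi^2|\xi|^2)^{-\sigma/2}$). Using the standard size estimates for $G_\sigma$ — namely $G_\sigma \ge 0$, $G_\sigma(x) \le C|x|^{\sigma - d}$ for $|x| \le 1$ and $G_\sigma(x) \le C e^{-|x|/2}$ for $|x| \ge 1$ — one sees that on the fundamental cube $[-\tfrac12,\tfrac12)^d$ the periodized kernel satisfies $0 \le K_\sigma(x) \le C(|x|^{\sigma - d} + 1) \le C' |x|^{\sigma - d}$; in particular $K_\sigma \in L^{r,\infty}(\td)$ with $1/r = 1 - \sigma/d$. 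Then the weak Young (Hardy--Littlewood--Sobolev) inequality on $\td$ — which for $1 < p \le q < \infty$ and $1 + 1/q = 1/p + 1/r$, i.e. precisely for $\sigma = d/p - d/q$, gives $\|K_\sigma * f\|_{L^q(\td)} \le C \|K_\sigma\|_{L^{r,\infty}(\td)} \|f\|_{L^p(\td)}$ — yields the estimate. One first checks the identity $\mathcal{J}^{-\sigma} f = K_\sigma * f$ for trigonometric polynomials $f$, where it is immediate from matching Fourier coefficients, and then extends the inequality to all $f \in L^p(\td)$ by density; the torus form of Hardy--Littlewood--Sobolev itself follows from the $\mathbb{R}^d$ version by a routine localization argument, or directly by real interpolation between the weak-type endpoint bounds for convolution with $K_\sigma$.

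I expect the main obstacle to be the kernel step: identifying $\mathcal{J}^{-\sigma}$ with convolution against the periodized Bessel kernel via Poisson summation and, above all, extracting the pointwise bound $K_\sigma(x) \le C|x|^{\sigma - d}$ near the origin from the near-diagonal and exponential far-field asymptotics of $G_\sigma$. One must also keep track of the restriction $0 < \sigma < d$, which is exactly what makes $|x|^{\sigma - d}$ locally integrable and places $K_\sigma$ in the correct weak-$L^r$ class with $r \in (1,\infty)$. Everything else — the group law for $\mathcal{J}^t$, density of trigonometric polynomials in $L^p(\td)$, and the Hardy--Littlewood--Sobolev inequality on the torus — is classical and can be quoted.
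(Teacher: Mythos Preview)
The paper does not actually prove this lemma: it is simply quoted from \cite{Cira2019} (Lemma~2.5(iii)) without argument. Your proposal, by contrast, supplies a complete and correct proof via the standard route---reduce to the mapping property $\mathcal{J}^{-\sigma}:L^p(\td)\to L^q(\td)$ for $\sigma=d/p-d/q$, realize $\mathcal{J}^{-\sigma}$ as convolution with the periodized Bessel kernel, and invoke Hardy--Littlewood--Sobolev on the torus.

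A brief comparison: the only alternative the paper hints at (in the Remark following Lemma~\ref{Lem7}, used there for Lemmas~\ref{Lem1.5} and~\ref{Lem2}) is transference from $\mathbb{R}^d$ via the extension $f\mapsto\phi\widetilde{f}$ and the norm equivalence~(\ref{rem2Eq2}). That route would also work here and is arguably shorter, since the Euclidean embedding $H^\mu_p(\mathbb{R}^d)\hookrightarrow H^\nu_q(\mathbb{R}^d)$ can be quoted directly. Your kernel approach is more self-contained but requires the Poisson-summation and pointwise-estimate step you flag; both are classical, and your identification of the constraint $0<\sigma<d$ (needed so that $K_\sigma\in L^{r,\infty}(\td)$ with $r=d/(d-\sigma)\in(1,\infty)$) is exactly right.
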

\begin{lem}[see \cite{Kato1988}, Appendix I] \label{Lem7}
Suppose that $s>0$, $p, p_2, p_4 \in (1,\infty)$ and $p_1,p_3\in (1,\infty]$ such that 
\eqns{
\frac{1}{p} = \frac{1}{p_1} + \frac{1}{p_2} = \frac{1}{p_3} + \frac{1}{p_4}. 
}
Let $f, g \in C^\infty(\mathbb{T}^d)$, then there exists constant $C$ independent of $f$ and $g$ such that 
\eqns{
\norm{\left[ \ls, f\right]g}_{L^p(\mathbb{T}^d)} \le C \left(
\norm{\nab f}_{L^{p_1}(\mathbb{T}^d)} \norm{\lsmj g}_{L^{p_2}(\mathbb{T}^d)}
+
\norm{g}_{L^{p_3}(\mathbb{T}^d)} \norm{\ls f}_{L^{p_4}(\mathbb{T}^d)}
\right),
}
where $\left[ \ls, f\right]g := \ls(fg) - f \ls g$.
\end{lem}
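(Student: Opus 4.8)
The plan is to pass to the Fourier side and run a Littlewood--Paley/paraproduct argument directly on $\td$, following Appendix~I of \cite{Kato1988}. Write $\langle \xi \rangle := (1 + 4\pi^2 |\xi|^2)^{1/2}$, so that $\ls$ is the toroidal Fourier multiplier with symbol $\langle \xi \rangle^s$ and $\lsmj$ the one with symbol $\langle \xi \rangle^{s-1}$. Since $\widehat{fg}(\xi) = \sum_{\eta \in \mathbb{Z}^d} \widehat f(\eta)\widehat g(\xi - \eta)$, the commutator is the bilinear Fourier multiplier
\eqns{
\widehat{[\ls, f]g}(\xi) = \sum_{\eta \in \mathbb{Z}^d} \bigl( \langle \xi \rangle^s - \langle \xi - \eta \rangle^s \bigr)\, \widehat f(\eta)\, \widehat g(\xi - \eta),
}
with symbol $\sigma(\xi,\eta) = \langle \xi \rangle^s - \langle \xi - \eta \rangle^s$. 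The only structural input needed is the elementary bound $|\partial^\alpha \langle \xi \rangle^s| \le C_\alpha \langle \xi \rangle^{s-|\alpha|}$, valid for every multi-index $\alpha$ because $\langle\xi\rangle\ge 1$ everywhere; in particular $\langle\cdot\rangle^s\in C^\infty(\mathbb{R}^d)$ and there is no symbol singularity to handle, even for non-integer $s$.

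First I would fix Littlewood--Paley projections $\Delta_j$ and partial sums $S_j = \sum_{j'<j}\Delta_{j'}$ on $\td$ and decompose $fg$ by Bony's paraproduct, which splits the commutator as $[\ls,f]g = \Pi_1 + \Pi_2 + \Pi_3$, where $\Pi_1 = \sum_j [\ls, \Delta_j f]\, S_{j-2} g$ (frequency of $f$ much larger than that of $g$), $\Pi_2 = \sum_k [\ls, S_{k-2} f]\, \Delta_k g$ (frequency of $f$ much smaller), and $\Pi_3 = \sum_{|j-k|\le 2}[\ls, \Delta_j f]\,\Delta_k g$ (comparable frequencies). In $\Pi_1$ and $\Pi_3$ the output frequency is at most of order $2^j$, so one simply distributes $\ls$ onto $f$: through Bernstein's inequality and the (periodic) Coifman--Meyer bilinear multiplier theorem each dyadic summand is controlled by quantities of the form $2^{js}\|\Delta_j f\|_{L^{p_4}}\|S_{j-2} g\|_{L^{p_3}}$, and summing over the scales with the $L^p$ square-function characterisation ($1<p<\infty$) gives $\|\Pi_1\|_{L^p}+\|\Pi_3\|_{L^p}\le C\|\ls f\|_{L^{p_4}}\|g\|_{L^{p_3}}$, the finitely-generated low-frequency blocks being absorbed via Lemma~\ref{Lem10}.

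The heart of the matter is $\Pi_2$, where $\widehat f$ sits on frequencies $\eta$ with $|\eta|\ll|\xi-\eta|\sim 2^k$. Here I would Taylor-expand the symbol to first order in $\eta$ about $\xi-\eta$,
\eqns{
\langle \xi \rangle^s - \langle \xi - \eta \rangle^s
& = \eta \cdot (\nab \langle \cdot \rangle^s)(\xi - \eta) \\
& \quad + \int_0^1 (1-\theta)\, \eta^{\top}(\nab^2 \langle \cdot \rangle^s)(\xi - \eta + \theta\eta)\,\eta \; d\theta ,
}
so that $\Pi_2 = \sum_{\ell=1}^d \sum_k (\partial_\ell S_{k-2}f)\,(m_{k,\ell}(D)\Delta_k g) + R$, where the linear symbol $m_{k,\ell}$, once frequencies are rescaled to unit size, is smooth and of size $\langle\cdot\rangle^{s-1}$ uniformly in $k$, while the remainder $R$ carries an extra factor $2^{-k}$ from the second-derivative bound and is summable by the same mechanism. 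The main term is a bilinear multiplier acting on the pair $(\nab f,\lsmj g)$ whose rescaled symbols form a Coifman--Meyer family uniformly across dyadic blocks; applying the periodic Coifman--Meyer theorem blockwise and summing with the $L^p$ square function yields $\|\Pi_2\|_{L^p}\le C\|\nab f\|_{L^{p_1}}\|\lsmj g\|_{L^{p_2}}$. The endpoints $p_1=\infty$ or $p_3=\infty$ are harmless: one keeps $\|\nab S_{k-2}f\|_{L^\infty}\le\|\nab f\|_{L^\infty}$ (resp.\ $\|S_{j-2}g\|_{L^\infty}\le\|g\|_{L^\infty}$) and uses square functions only in the remaining variable. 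Adding the three bounds gives the claim; the periodic Coifman--Meyer estimate used throughout follows by transference, since $\sigma$ restricted to $\mathbb{Z}^d\times\mathbb{Z}^d$ extends to a smooth symbol on $\mathbb{R}^d\times\mathbb{R}^d$ with the same H\"ormander-type bounds.

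The step I expect to be the main obstacle is the bookkeeping inside $\Pi_2$: extracting exactly one derivative of $f$ — not $\lceil s\rceil$ as in Lemma~\ref{Lem2}, and not a fractional amount — while leaving precisely $\lsmj$ on $g$, and then checking that the resulting family of rescaled bilinear symbols is genuinely of Coifman--Meyer type uniformly over all dyadic scales, together with making the $\mathbb{Z}^d$ transference precise. By contrast, $\Pi_1$, $\Pi_3$, the low-frequency blocks, and the endpoint exponents are routine once the symbol estimates $|\partial^\alpha\langle\xi\rangle^s|\le C_\alpha\langle\xi\rangle^{s-|\alpha|}$ are recorded.
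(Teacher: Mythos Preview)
Your plan is sound and lands on essentially the same three-region split as the paper, but the machinery you use is genuinely different in two places. The paper does not use Littlewood--Paley blocks or Bony's paraproduct at all: instead it writes the commutator symbol $\sigma(\xi,\eta)=\langle\xi+\eta\rangle^s-\langle\eta\rangle^s$ (with $\xi$ the frequency of $f$, $\eta$ that of $g$) and cuts it with a smooth partition $\Phi_1+\Phi_2+\Phi_3=1$ in the ratio $\frac{1+|\xi|^2}{1+|\eta|^2}$. Your $\Pi_2$ matches their $\sigma_1$ (low $f$-frequency): both Taylor-expand to peel off exactly one factor of $\xi$ onto $f$ and $\langle\eta\rangle^{s-1}$ onto $g$, then invoke a periodic Coifman--Meyer theorem; the paper expands to all orders and sums the resulting geometric series, whereas your first-order-plus-integral-remainder version is equally valid. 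Your $\Pi_1$ matches their $\sigma_3$ and is handled the same way in spirit.

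The real divergence is the comparable-frequency piece. You treat your $\Pi_3$ by the usual paraproduct argument (Bernstein on the output ball plus square-function summation). The paper's $\sigma_2$ instead faces the difficulty that $\langle\xi+\eta\rangle^{s}$ need not satisfy the required symbol bounds when $|\xi|\sim|\eta|$ are large but $\xi+\eta$ is small, so derivatives of the symbol blow up; the paper resolves this by embedding $s$ in a complex parameter, checking the bounds at $\re z=0$ and at $\re z$ large enough, and invoking an analytic-families interpolation theorem (Grafakos--Masty{\l}o). Your paraproduct route sidesteps this interpolation entirely, which is a genuine simplification; the price is that the diagonal summation needs a touch more care than your one-line ``Bernstein plus square function'' suggests, since the output of $\Delta_j f\cdot\Delta_k g$ is not frequency-localised near $2^j$ but only supported in $|\xi|\lesssim 2^j$, so one has to use almost-orthogonality of the outputs (standard, but worth flagging). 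Finally, the paper is explicit about the $\mathbb{Z}^d$ setting: it uses the periodic multilinear multiplier theorem of Cardona--Kumar directly, together with a lemma passing from continuous derivative bounds to discrete difference bounds, rather than appealing to transference as you do. Both approaches are correct; yours is closer to modern textbook treatments, while the paper stays faithful to Kato--Ponce's original argument adapted to $\td$.
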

\begin{rem}
The above result was proven in \cite{Kato1988} for the case of $\mathbb{R}^d$. In Appendix I we provide detailed proof in the case of $\mathbb{T}^d$.
\end{rem}
\begin{rem}
References of Lemmas \ref{Lem1.5} and \ref{Lem2}  are provided for $\mathbb{R}^d$ domain. By following argument presented in Section 2.3.1 of \cite{Cira2019}, those formulations can be adapted for $\td$ case by considering extension operator $ H^s_p(\td) \ni f \rightarrow \phi \widetilde{f} \in H^s_p(\mathbb{R}^d)$, where $\phi$ is smooth, compactly supported function defined on $\mathbb{R}^d$, such that $\phi_{ | [0,1)^d } = 1$ and
\eqns{
\widetilde{f}(x) = f(x - \lfloor x \rfloor),
}
where $\lfloor x \rfloor = (\lfloor x_1 \rfloor, \dots, \lfloor x_d \rfloor)$, ($\lfloor \cdot \rfloor$ - floor function).
Then, it is clear that
\eqnsl{
\norm{f}_{W^{k,p}(\td)} 
\le 
\norm{\phi \widetilde{f}}_{W^{k,p}(\mathbb{R}^d)}
\le
C \norm{f}_{W^{k,p}(\td)} .
}{rem2Eq1}
From complex interpolation (see e.g.: Theorem 2.6 in \cite{Luna2018}), we can deduce analogous inequality for the fractional spaces. 
\eqnsl{
\norm{f}_{H^{s}_{p}(\td)} 
\le 
\norm{\phi \widetilde{f}}_{H^{s}_{p}(\mathbb{R}^d)}
\le
C \norm{f}_{H^{s}_{p}(\td)} .
}{rem2Eq2}
Lastly, by considering Lemma \ref{Lem1} for function $\phi \widetilde{f}$ and using (\ref{rem2Eq2}) we get the needed statements. The proof of the Lemma \ref{Lem2} is more complicated and we shall give more details.  
\end{rem}
\begin{proof}
Let $\psi$, $\phi$ be smooth, compactly supported functions defined on $\mathbb{R}^d$, such that $\psi_{ | [0,1]^d} \equiv 1$ and $\phi_{ | \sup \psi} \equiv 1$.
Also let us observe that $\widetilde{G(f)} = G(\widetilde{f})$.
Now, using Lemma \ref{Lem1.5} and the fact that $G(0) = 0$ we can write 
\eqns{
\norm{\psi \widetilde{G(f)}}_{H^s(\mathbb{R}^d)}
& =
\norm{\psi G(\widetilde{f})}_{H^s(\mathbb{R}^d)}
=
\norm{\psi G(\phi \widetilde{f})}_{H^s(\mathbb{R}^d)}
\le C
\norm{\psi}_{H^s(\mathbb{R}^d)}
\norm{G(\phi \widetilde{f})}_{H^s(\mathbb{R}^d)}
\\ & \le C
\norm{G(\phi \widetilde{f})}_{H^s(\mathbb{R}^d)}
\le C 
\ncs{G^{'}} \left( 1 + \norm{\phi \widetilde{f}}_{L^\infty(\mathbb{R}^d)} \right)^{\lceil s \rceil} \norm{\phi \widetilde{f}}_{H^s(\mathbb{R}^d)}.
}  
We can easily estimate both sides using (\ref{rem2Eq2}) to get
\eqns{
\norm{G(f)}_{H^s(\td)}
 \le 
 C \ncs{G^{'}} \left( 1 + \norm{f}_{L^\infty(\td)} \right)^{\lceil s \rceil} \norm{f}_{\hs)}.
} 
\end{proof}
\begin{lem} \label{Lem9}
Let $f \in \hsj $ and $s > \frac{d}{2}$. Then we have 
\eqnsl{
\nif{\nab f} \le C
\norm{f}_{H^{s}(\Omega)}^{\frac{1}{2}\left( s - \frac{d}{2} \right)}
\norm{f}_{H^{s+1}(\Omega)}^{1-\frac{1}{2}\left( s - \frac{d}{2} \right)}
\text{~~for~} s \in \left( \frac{d}{2}, \frac{d}{2} + 1 \right]
}{Lem9A}
and
\eqns{
\nif{\nab f} \le C \nhs{f} \text{~~for~} s \in \left( \frac{d}{2} + 1, \infty  \right).
} 
\end{lem}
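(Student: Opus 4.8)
The plan is to argue componentwise: since $|\nab f|\le\sum_{j=1}^{d}|\partial_{x_j}f|$, it suffices to bound each $\nif{\partial_{x_j}f}$ and absorb the factor $d$ into the constant. Fix $j$; I will repeatedly use the elementary bounds
\[
\norm{\partial_{x_j}f}_{H^{s-1}(\td)}\le\nhs{f},\qquad \norm{\partial_{x_j}f}_{H^{s}(\td)}\le\nhsj{f},
\]
which follow either by comparing Fourier coefficients (multiplying $\hat f(k)$ by $2\pi i k_j$ only decreases the relevant weights) or from Lemma~\ref{hsNormOfGradient} applied with smoothness index $s-1$, respectively $s$. In the regime $s\in(\tfrac d2+1,\infty)$ the conclusion is then immediate: here $s-1>\tfrac d2$, so Lemma~\ref{Lem6} gives $\nif{\partial_{x_j}f}\le C\,\norm{\partial_{x_j}f}_{H^{s-1}(\td)}\le C\,\nhs{f}$, and summing over $j$ finishes this case.

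For the delicate regime $s\in(\tfrac d2,\tfrac d2+1]$ the embedding $H^{s-1}(\td)\hookrightarrow L^\infty(\td)$ is unavailable, so the idea is to place $\partial_{x_j}f$ in an intermediate Bessel space $H^{\sigma}(\td)$ with $\sigma$ just above $\tfrac d2$ and interpolate. Concretely, set
\[
\sigma:=\frac s2+\frac d4,\qquad \mu:=\sigma-(s-1)=1-\tfrac12\left(s-\tfrac d2\right).
\]
A short computation shows that the hypothesis $s\in(\tfrac d2,\tfrac d2+1]$ (in fact $s\in(\tfrac d2,\tfrac d2+2]$ is already enough for this step) yields exactly the three facts needed: $\sigma>\tfrac d2$, so that $\partial_{x_j}f\in H^\sigma(\td)$ and Lemma~\ref{Lem6} applies to give $\nif{\partial_{x_j}f}\le C\,\norm{\partial_{x_j}f}_{H^{\sigma}(\td)}$; and $s-1\le\sigma\le s$, so that $\mu\in[\tfrac12,1)$ is an admissible interpolation weight.

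The core of the argument is the interpolation inequality
\[
\norm{g}_{H^{\sigma}(\td)}\le\norm{g}_{H^{s-1}(\td)}^{\,1-\mu}\,\norm{g}_{H^{s}(\td)}^{\,\mu}\qquad\text{for all }g\in H^{s}(\td),
\]
which I would prove directly from the Fourier-series characterisation of the Bessel norms: since $(s-1)(1-\mu)+s\mu=\sigma$,
\[
\big(1+4\pi^2|k|^2\big)^{\sigma}|\hat g(k)|^2=\Big[\big(1+4\pi^2|k|^2\big)^{s-1}|\hat g(k)|^2\Big]^{1-\mu}\Big[\big(1+4\pi^2|k|^2\big)^{s}|\hat g(k)|^2\Big]^{\mu},
\]
so summing over $k\in\mathbb Z^d$ and applying Hölder's inequality with exponents $\tfrac1{1-\mu}$ and $\tfrac1\mu$ gives the claim (alternatively this is an instance of complex interpolation between $H^{s-1}(\td)$ and $H^{s}(\td)$). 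Taking $g=\partial_{x_j}f$, inserting the two derivative bounds from the first paragraph, and summing over $j$ then yields
\[
\nif{\nab f}\le C\,\nhs{f}^{\,1-\mu}\,\nhsj{f}^{\,\mu}=C\,\nhs{f}^{\,\frac12\left(s-\frac d2\right)}\,\nhsj{f}^{\,1-\frac12\left(s-\frac d2\right)},
\]
which is precisely \eqref{Lem9A}.

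I do not expect a genuine obstacle here: the whole proof reduces to one Sobolev embedding (Lemma~\ref{Lem6}), the trivial derivative estimates coming from Lemma~\ref{hsNormOfGradient}, and the one-line Hölder interpolation of Bessel norms. The only point demanding care is the exponent bookkeeping --- checking that the single choice $\sigma=\tfrac s2+\tfrac d4$ simultaneously lies strictly above $\tfrac d2$ and inside $[s-1,s]$, which is exactly what the range $s\in(\tfrac d2,\tfrac d2+1]$ guarantees, and that $1-\mu$ then equals $\tfrac12(s-\tfrac d2)$. (One may also note, via $\nhs{f}\le\nhsj{f}$ from Lemma~\ref{Lem10}, that the same argument in fact delivers the stronger inequality with $s-\tfrac d2$ in place of $\tfrac12(s-\tfrac d2)$; the weaker stated form is all that is needed in the sequel.)
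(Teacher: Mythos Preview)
Your proposal is correct and follows essentially the same approach as the paper's own proof: both choose the intermediate index $\sigma=\tfrac12(s+\tfrac d2)$, apply the embedding of Lemma~\ref{Lem6} to place $\nabla f$ (or its components) in $L^\infty$ via $H^{\sigma}$, and then interpolate between $H^{s-1}$ and $H^{s}$ to obtain the stated exponents. Your version is slightly more explicit (working componentwise and spelling out the H\"older argument for the interpolation step), but the structure and the key choice of $\sigma$ coincide exactly with the paper's.
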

\begin{proof}
First we concentrate on the case $s \in \left( \frac{d}{2}, \frac{d}{2} + 1 \right]$. We see that based on Lemma \ref{Lem6} we have
\eqns{
\norm{\nabla f}_{L^\infty(\Omega)}
\le 
C\norm{\nabla f}_{H^{\frac{1}{2}\left( s + \frac{d}{2} \right)}(\Omega)}.
}
We see that
\eqns{
\frac{1}{2}\left( s + \frac{d}{2} \right) = \frac{1}{2}\left( s - \frac{d}{2} \right) (s-1) + \left(1-\frac{1}{2}\left( s - \frac{d}{2} \right) \right)s
}
and thus we may use interpolation inequality to get
\eqns{
\norm{\nabla f}_{L^\infty(\Omega)}
& \le C
\norm{\nabla f}_{H^{s-1}(\Omega)}^{\frac{1}{2}\left( s - \frac{d}{2} \right)}
\norm{\nabla f}_{H^{s}(\Omega)}^{1-\frac{1}{2}\left( s - \frac{d}{2} \right)}.
}
Thus based on Lemma \ref{hsNormOfGradient} we get
\eqns{
\norm{\nabla f}_{L^\infty(\Omega)}
\le C
\norm{f}_{H^{s}(\Omega)}^{\frac{1}{2}\left( s - \frac{d}{2} \right)}
\norm{f}_{H^{s+1}(\Omega)}^{1-\frac{1}{2}\left( s - \frac{d}{2} \right)}.  
}
If $s \in \left( \frac{d}{2} + 1, \infty \right)$ we have
\eqns{
\norm{\nabla f}_{L^\infty(\Omega)}
\le 
C_1\norm{\nabla f}_{H^{s-1}(\Omega)}
\le 
C_2\norm{f}_{H^{s}(\Omega)},
}
which follows from Lemma \ref{Lem6} and \ref{hsNormOfGradient}.
\end{proof}

\section{Proof of Theorem \ref{Th1}}
The proof will be divided into several steps to better present and simplify reasoning.
\subsection{Definitions of auxiliary functions}
Given initial data $\om_0$, $b_0$ we define $\bmi$, $\omi$, $\oma$ in the following way 
\eqq{\bmi = \min_{x \in \td} b_{0}(x) ,  }{f}
\eqq{
\omi = \min_{x \in \td} \om_{0}(x), 
~~~~~~~~
\oma = \max_{x \in \td} \om_{0}(x).
}{g}
We define auxiliary functions as follows
\eqq{
\bmt = \bmit,  \hd ~
\omt = \omitt, \hd ~
\ommt = \omat, \hd ~
\mnit = \frac{1}{4}\frac{\bmt}{\ommt}.  }{h}
To define an approximate problem we have to introduce a few auxiliary functions. For fixed $t>0$ we denote by $\Pht= \Pht(x)$ a smooth, non-decreasing function such that
\eqq{
\Pht(x)= \left\{
\begin{array}{rll}
\frac{1}{2}\bmt & \m{ for } & x<\frac{1}{2}\bmt \\ x &  \m{ for } & x\geq\bmt\\
\end{array}  \right. ,}{defPsi}
where  $\bmt$ is defined by (\ref{h}).  We assume that the function $\Pht $ also satisfies
\eqq{ 
|\Pht^{(k)}(x)|\leq c_{0} (\bmt)^{1 - k} ~~~\forall k \in \{ 1, \dots, \lceil s \rceil + 1 \}   
}{estiPsi}
where, $c_{0}$ is a constant independent on $x$ and $t$ (see in the appendix of \cite{KoKu} for details). We  also need smooth, non-decreasing function $\Pst$ such that
\eqq{ \Pst(x)=\left\{
\begin{array}{rll}
\frac{1}{2}\omt & \m{ for } & x< \frac{1}{2}\omt \\
x  & \m{ for } & x \in [ \omt, \ommt] \\
2\ommt  & \m{ for } & x >  2\ommt \\
\end{array} \right. ,}{defPhi}
We assume that this function additionally satisfy
\eqq{ 
|\Pst^{(k)}(x)|\leq c_{0} (\omt)^{1 - k} ~~~\forall k \in \{ 1, \dots, \lceil s \rceil + 1 \}, 
}{estiPhi}
for some constant $c_{0}$ (the construction of $\Pst$ is similar to argument presented in the appendix of \cite{KoKu}).
\subsection{Approximated system}
To obtain the approximate system we will follow the procedure used in \cite{Wang2013}, \cite{Wang2011}. Let us define the operator $\pn$ in the following way 
\eqnsl{
\pn f(x) = \sum_{|k| < n} f_k e^{2 \pi i k x}, ~~~~~
f_k = \int_{\mathbb{T}^d} f(x) e^{-2 \pi i k x} dx .
}{operatorPn}
In later parts, we will require some properties of the $\pn$ operator. First, it is obvious that
\eqns{
\pn \pn = \pn ,
} 
which follows from orthogonality in $L^2(\td)$ of functions $\{ e^{2 \pi i k x} \}_{k \in \mathbb{Z}^d}$. Let us define $C(n) = \sum_{|k| < n} 1$ and observe that for $m \in \mathbb{N}$ and $1\le p \le \infty$ we have
\eqns{
\norm{\frac{\partial^m  \n{\pn f}}{\partial x_{i_1} \dots \partial x_{i_m}} }_p 
& = 
(2 \pi)^{m} \norm{\sum_{|k| < n} k_{i_1} \dots k_{i_m} f_k e^{2 \pi i k \cdot}}_p 
 \le 
(2 \pi)^{m} \sum_{|k| < n} |k_{i_1} \dots k_{i_m}| |f_k| \norm{e^{2 \pi i k \cdot}}_p \\ &
\le 
(2 \pi n)^{m} \sum_{|k| < n}  \left| f_k \right|
\le 
(2 \pi n)^{m} \sqrt{C(n)} \bigg(\sum_{|k| < n} \left| f_k \right|^2 \bigg)^\jd \\
& = 
(2 \pi n)^{m} \sqrt{C(n)} \nd{\pn f}.
}
Thus for $m \in \mathbb{N}$ and $1\le p \le \infty$ we have
\eqnsl{
\norm{\pn f}_{W^{m,p}(\mathbb{T}^d)} \le C(n,m,d) \nd{\pn f }.
}{operatorPnProp1}
The obtained result is not surprising and could be justified based on the equivalence of norms in finite-dimensional spaces. 
Also, we can easily check that the order of differentiation and $\pn$, when sequentially applied to function $f:\td \rightarrow \mathbb{C}$, is interchangeable 
\eqns{
\n{\pn \frac{\partial f}{\partial x_i}}(x) 
& = \sum_{|k| < n} \int_{\mathbb{T}^d} \frac{\partial f(x^\prime)}{\partial x_i^\prime}  e^{-2 \pi i k x^\prime} dx^\prime e^{2 \pi i k x}
= \sum_{|k| < n} 2 \pi i k_i \int_{\mathbb{T}^d} f(x^\prime) e^{-2 \pi i k x^\prime} dx^\prime e^{2 \pi i k x} \\ &
= \frac{\partial }{\partial x_i} \n{\sum_{|k| < n} \int_{\mathbb{T}^d} f(x^\prime) e^{-2 \pi i k x^\prime} dx^\prime e^{2 \pi i k x} } 
= \n{ \frac{\partial }{\partial x_i} \pn f} (x).
}
Thus it is clear that the following hold:
\eqnsl{
\pn \divv f = \divv \pn f, ~~~~
\pn \nabla f = \nabla \pn f, ~~~~
\pn \Delta f = \Delta \pn f.
}{pnDerivativeCommutatorProp}
Before we define the ODE system we aim to solve (for $\alpha^n_{k,j}$, $\gamma^n_k$, $\eta^n_k$), we introduce the following functions
\eqnsl{
& v_{j}^n(x,t) = \sum_{|k|<n} \alpha^n_{k,j}(t) e^{2 \pi i k x },
~~~~
\om^n(x,t) = \sum_{|k|<n} \beta^n_k(t) e^{2 \pi i k x },
\\ &
b^n(x,t) = \sum_{|k|<n} \gamma^n_k(t) e^{2 \pi i k x },
~~~~~~
p^n(x,t) = \sum_{|k|<n} \eta^n_k(t) e^{2 \pi i k x }.
}{approxDef}
Thanks to orthonormality of basis $\{ e^{2 \pi i k x } \}_{k \in \mathbb{Z}^d}$ in $L^2(\td)$ we have 
\eqnsl{
\ndk{\vn(t)} = \sum_{|k|<n} |\alpha^n_{k,j}(t)|^2,~~~~
\ndk{\on(t)} = \sum_{|k|<n} |\beta^n_{k}(t)|^2,~~~~
\ndk{\bn(t)} = \sum_{|k|<n} |\gamma^n_{k}(t)|^2.
}{pressureEqA0}
Additionally, we define function $\overline{\nn}$ is the following way:
\eqnsl{
\overline{\nn} = \frac{\Pht (\re (\bn)) }{\Pst (\re(\on))}.
}{nnA1}
The modification was introduced to guarantee positive signs of diffusive terms. Moreover, the presence of $\re(\cdot)$ in the definition of $\overline{\nn}$ allows us to deal with possibly complex-valued solutions. 
Now we consider the following system of equations
\begin{align}
 \partial_t \alpha^n_{k,j}
& =
\left(
\n{
-\pn \left( \vn \cdot \nab \vn \right)
+ 
\divv \n{  \pn \left( \overline{\nn} D (\vn) \right) }
+
\nabla p_n
}_j
, e^{-2\pi i k \cdot} 
\right), 
\label{vnEqC1} \\ 
\partial_t \beta_k & = 
\n{
- \pn \left(\vn \cdot \nab \on \right) +  \divv \n{ \pn \left( \overline{\nn} \nab \on \right)} - \alpha \pn \left( \on^2 \right),
e^{-2\pi i k \cdot}
}, \label{omnEqC1} \\
\partial_t \gamma_k & =
\n{- \pn \left(\vn \cdot \nab \bn \right) + \divv \n{ \pn \left( \overline{\nn} \nab \bn \right) }
 - \pn \left( \bn  \on \right) + \pn \left( \overline{\nn} |D \vn |^2 \right)
, e^{-2\pi i k \cdot}
} \label{bnEqC1}
\end{align}
supplied with initial conditions 
\eqnsl{
\alpha_{k,j}^n(0) = \n{v_{0,j}, e^{-2\pi i k \cdot}}, ~~~~
\beta_{k}^n(0) = \n{\omega_{0}, e^{-2\pi i k \cdot}}, ~~~~
\gamma_{k}^n(0) = \n{b_{0}, e^{-2\pi i k \cdot}}
}{initEqC1}
and equation from which pressure is calculated
\eqnsl{
- \Delta p_n
=
\divv \Big[\pn \left( \vn \cdot \nab \vn \right) - \divv \n{  \pn \left( \overline{\nn} D (\vn) \right)} \Big], \quad \quad \quad
\int_{\mathbb{T}^d} p_n(x) dx = 0.
}{pressureEqA2}
The introduced system of equations can be represented in the following form
	\begin{align}
    \frac{d}{dt} & \begin{bmatrix}
           \n{\alpha_{k,j}^n}_{\substack{k = 1,...,n \\ j=1,...,d}} \\
           \n{\beta_{k}^n}_{k = 1,...,n} \\
           \n{\gamma_{k}^n}_{k = 1,...,n}
         \end{bmatrix}
         = F\n{ 
         \n{\alpha_{k,j}^n}_{\substack{k = 1,...,n \\ j=1,...,d}},
         \n{\beta_{k}^n}_{k = 1,...,n}, 
         \n{\gamma_{k}^n}_{k = 1,...,n}, 
         t 
         }.
    \end{align}
To show the existence of the solution of system (\ref{vnEqC1}) - (\ref{pressureEqA2}) we will show that the right-hand side is locally Lipschitz continuous with respect to $ \alpha_{k,j}^n $, $\beta_{k}^n$, $\gamma_{k}^n$, so basically, we need to estimate 
\eqns{
|
F(\alpha_{k,j}^{n,2},\beta_{k}^{n,2}, \gamma_{k}^{n,2}, t)
-
F(\alpha_{k,j}^{n,1},\beta_{k}^{n,1}, \gamma_{k}^{n,1}, t )
|.
} 
To this end, we introduce the following functions
\eqnsl{
v_{n,j}^{m}(x) = \sum_{|k|<n} \alpha^{n,m}_{k,j} e^{2 \pi i k x },
~~~~
\om^{m}_n(x) = \sum_{|k|<n} \beta^{n,m}_k e^{2 \pi i k x },
~~~~
b^{m}_n(x) = \sum_{|k|<n} \gamma^{n,m}_k e^{2 \pi i k x }.
}{pressureEqA3.05}
We also introduce 
\eqnsl{
\overline{\nn^m} = \frac{\Pht (\re (\bn^m)) }{\Pst (\re(\on^m))}.
}{pressureEqA3.1}
Additionally, functions $p_n^m$ are calculated with the help of the system (\ref{pressureEqA2}), with natural substitution of functions on the right-hand side: $v_{n,j} \rightarrow v_{n,j}^{m}$, $\om_{n} \rightarrow \om_{n}^{m}$, $b_{n} \rightarrow b_{n}^{m}$. \newline \noindent
We start checking local Lipschitz continuity by considering the term of the right-hand side of (\ref{vnEqC1}) involving pressure term. We see that
\eqns{
\left| 
\left( (\nabla p_n^2)_j , e^{-2\pi i k \cdot }  \right) 
-
\left( (\nabla p_n^1)_j , e^{-2\pi i k \cdot }  \right)
\right|
\le \nd{e^{-2\pi i k \cdot }} \nd{\nabla \n{p_n^2 - p_n^1}}
\le \nd{\nabla \n{p_n^2 - p_n^1}}.
}
From the basic theory of elliptic partial differential equations, the solution of (\ref{pressureEqA2}) system exists and the following estimate holds 
\eqnsl{
\ndk{\nabla \left( p_n^2 - p_n^1 \right)} 
& \le C \big(
\ndk{
\pn \left( \vn^2 \cdot \nab \vn^2 \right)
-
\pn \left( \vn^1 \cdot \nab \vn^1 \right)
}\\
& +
\ndk{ 
  \divv \n{ \pn  \left( \overline{\nn^2} D \vn^2 \right)}
- \divv \n{ \pn  \left( \overline{\nn^1} D \vn^1 \right)} 
}
\big).
}{pressureEqA4}
Let us analyse terms of the right-hand side of (\ref{pressureEqA4}) separately. First by using triangle inequality, H\"older inequality and (\ref{operatorPnProp1}) we get
\eqnsl{
& \nd{
\pn \left( \vn^2 \cdot \nab \vn^2 \right)
-
\pn \left( \vn^1 \cdot \nab \vn^1 \right)
}
\le 
\nd{
(\vn^2 - \vn^1) \cdot \nab \vn^2
-
\vn^1 \cdot \nab (\vn^1 - \vn^2)
}\\
& ~~~~\le 
\nif{\nab \vn^2}
\nd{
\vn^2 - \vn^1
}
+
\nif{\vn^1}
\nd{
\nab (\vn^1 - \vn^2)
} 
\le 
C (\nd{\vn^2} + \nd{\vn^1})
\nd{
\vn^2 - \vn^1
}.
}{pressureEqA5}
Now, we go back to analysing the second term of the right-hand side of (\ref{pressureEqA4}). Again, by using (\ref{operatorPnProp1}) we get 
\eqns{
\nd{ 
  \divv \n{ \pn  \left( \overline{\nn^2} D \vn^2 \right)}
- \divv \n{ \pn  \left( \overline{\nn^1} D \vn^1 \right)}
} 
\le 
C \nd{ 
  \overline{\nn^2} D \vn^2
- \overline{\nn^1} D \vn^1
}.
}
By using triangle inequality and H\"older inequality we get 
\eqns{
& \nd{ 
  \divv \n{ \pn  \left( \overline{\nn^2} D \vn^2 \right)}
- \divv \n{ \pn  \left( \overline{\nn^1} D \vn^1 \right)}
} \\
& ~~~~ \le C \nd{ 
  (\overline{\nn^2} - \overline{\nn^1}) D (\vn^2)
- \overline{\nn^1} D (\vn^1 - \vn^2)
} \\
& ~~~~ \le C \big(\nif{D (\vn^2)} \nd{ 
  \overline{\nn^2} - \overline{\nn^1}}
+ 
\nif{\overline{\nn^1}} \nd{D (\vn^1 - \vn^2)}
\big).
}
With the help of (\ref{operatorPnProp1}) we get 
\eqnsl{
& \nd{ 
  \divv \n{ \pn  \left( \overline{\nn^2} D \vn^2 \right)}
- \divv \n{ \pn  \left( \overline{\nn^1} D \vn^1 \right)}
} 
\\ & ~~~~~~ 
\le C \left(\nd{
\vn^2
}
\nd{ 
  \overline{\nn^2} - \overline{\nn^1}}
+ 
\nif{\overline{\nn^1}}
\nd{
\vn^2 - \vn^1
}
\right). 
}{pressureEqA6}
We see that based on (\ref{pressureEqA3.1}), (\ref{defPsi}), (\ref{defPhi}) and (\ref{operatorPnProp1}) we have
\eqnsl{
\nif{\overline{\nn^1}} 
=
\nif{\frac{\Pht \n{\re(\bn^1)} }{\Pst \n{\re(\on^1)}}}
\le 
\frac{\frac{1}{2}\bmt + \nif{\bn^1}}{\frac{1}{2} \omt }
\le 
\frac{\bmt + 2C\nd{\bn^1}}{\omt } 
}{pressureEqA7}
and
\eqns{
& \nd{ 
\overline{\nn^2} - \overline{\nn^1}
}
=
\nd{ 
\frac{\Pht (\re \bn^2) - \Pht (\re \bn^1) }{\Pst (\re \on^2)}
- 
\Pht (\re \bn^1) \frac{ \Pst (\re \on^2) - \Pst (\re \on^1)}{\Pst (\re \on^1)\Pst (\re \on^2)}
} \\
& \quad \quad \le
\frac{2}{\omt} 
\nd{ 
\Pht (\re \bn^2 ) - \Pht (\re \bn^1) }
+ \frac{\frac{1}{2}\bmt + C\nd{\bn^1}}{ \n{\frac{1}{2}\omt}^2 } \nd{\Pst (\re \on^2) - \Pst (\re \on^1)}
.
}
Now we use the fact that functions $\Pst$, $\Pht$ are Lipschitz continuous and obtain 
\eqnsl{
\nd{ 
\overline{\nn^2} - \overline{\nn^1}
}
\le C\left(
\frac{2}{\omt} 
\nd{ 
\bn^2 - \bn^1 }
+ \frac{\frac{1}{2}\bmt + C\nd{ \bn^1}}{ \n{\frac{1}{2}\omt}^2 } \nd{\on^2 - \on^1}
\right).
}{pressureEqA8}
Thus by plugging (\ref{pressureEqA8}) and (\ref{pressureEqA7}) into (\ref{pressureEqA6}) we get
\eqnsl{
& \nd{ 
  \divv \n{ \pn  \left( \overline{\nn^2} D (\pn \vn^2) \right)}
- \divv \n{ \pn  \left( \overline{\nn^1} D (\pn \vn^1) \right)}
} \\
& ~~ \le C_1 
\bigg(
\nd{\vn^2 - \vn^1}
+
\nd{\bn^2 - \bn^1}
+
\nd{\on^2 - \on^1}
\bigg),
}{pressureEqA9}
where $C_1 = C_1 \big(\nd{b^1_n}, \nd{\vn^2}, \bmt, \omt \big)$. Thus by using (\ref{pressureEqA9}), (\ref{pressureEqA5}), (\ref{pressureEqA4}) with the help of (\ref{pressureEqA3.05}), (\ref{pressureEqA0}) we get 
\eqns{
& \ndk{\nabla \left( p_n^2 - p_n^1 \right)}  
\le C_2 
\bigg(
\sum_{j=1}^d \sum_{|k|<n}
|\alpha^{n,2}_{k,j}-\alpha^{n,1}_{k,j}|^2
+
|\beta^{n,2}_{k}-\beta^{n,1}_{k,j}|^2
+
\sum_{|k|<n}
|\gamma^{n,2}_{k}-\gamma^{n,1}_{k}|^2
\bigg),
}
where 
\eqns{
C_2 = C_2 \left(
\sum_{|k|<n} |\gamma^{n,1}_{k}|^2, \sum_{j=1}^d \sum_{|k|<n}
|\alpha^{n,1}_{k,j}|^2, \sum_{j=1}^d \sum_{|k|<n}
|\alpha^{n,2}_{k,j}|^2, \bmt, \omt 
\right).
} 
Verification of the Lipschitz condition for other terms is analogous to conducted calculations. We will give one more estimate 
\eqns{
\big|
\n{ \pn \left( \overline{\nn^2} |D \vn^2 |^2 \right), e^{-2\pi i k \cdot}}
& -
\n{ \pn \left( \overline{\nn^1} |D \vn^1 |^2 \right), e^{-2\pi i k \cdot}}
\big| \\ & 
\le 
C \nd{
\pn \left( \overline{\nn^2} |D \vn^2 |^2 \right)
-
\pn \left( \overline{\nn^1} |D \vn^1 |^2 \right)
}.
}
By H\"older's inequality, we get
\eqns{
& \nd{
\pn \left( \overline{\nn^2} |D \vn^2 |^2 \right)
-
\pn \left( \overline{\nn^1} |D \vn^1 |^2 \right)
} \\
& ~~~\le
\nd{\overline{\nn^2} - \overline{\nn^1}} \nif{D \vn^2}^2 + \nif{\overline{\nn^1}}\nif{D \vn^2 + D \vn^1} \nd{D \vn^2 - D \vn^1}.
}
Now, based on (\ref{pressureEqA7}), (\ref{pressureEqA8}), (\ref{operatorPnProp1}) and (\ref{pressureEqA3.05}) we get
\eqns{
& \nd{
\pn \left( \overline{\nn^2} |D \pn \vn^2 |^2 \right)
-
\pn \left( \overline{\nn^1} |D \pn \vn^1 |^2 \right)
} \\ & ~~~~~~~~~~~~~~~~~~~~
\le \overset{\sim}{C} 
\bigg(
\nd{\vn^2 - \vn^1} 
+
\nd{\bn^2 - \bn^1} 
+
\nd{\on^2 - \on^1}
\bigg),
}
where 
\eqns{
\overset{\sim}{C} = \overset{\sim}{C} \left(
\sum_{|k|<n} |\gamma^{n,1}_{k}|^2, \sum_{j=1}^d \sum_{|k|<n}
|\alpha^{n,1}_{k,j}|^2, \sum_{j=1}^d \sum_{|k|<n}
|\alpha^{n,2}_{k,j}|^2, \bmt, \omt \right).
}
Because all right-hand side's terms in (\ref{vnEqC1}) - (\ref{bnEqC1}) are locally Lipschitz continuous, thus the existence of the unique solution for some $T_n> 0$ follows from Cauchy-Lipschitz theorem. Now let us multiply equations (\ref{vnEqC1})-(\ref{bnEqC1}) by $e^{2\pi i k x}$ and make summation over $|k|<n$: 
\eqnsl{
\partial_t \vn  + \pn \left( \vn \cdot \nab \vn \right) & -  \divv \n{ \pn \left( \overline{\nn} D \vn \right)}
+ \nabla p_n = 0,
}{vnEqA1}
\eqnsl{
\partial_t \on + \pn \left( \vn \cdot \nab \on \right) & -  \divv \n{ \pn \left( \overline{\nn} \nab  \on \right)} = - \alpha \pn \left( \on^2 \right) ,
}{onEqA1}
\eqnsl{
\partial_t \bn + \pn & \left( \vn \cdot \nab \bn \right)  - \divv \n{ \pn \left( \overline{\nn} \nab \bn \right) }
 = - \pn \left( \bn  \on  \right) + \pn \left( \overline{\nn} |D \vn |^2 \right),
}{bnEqA1}
\eqns{
\vn(0,x) = \pn v_0(x), ~~~~
\on(0,x) = \pn \om_0(x), ~~~~
\bn(0,x) = \pn b_0(x).
}
Now, by applying divergence to equation (\ref{vnEqA1}) and by using (\ref{pressureEqA2}) we get
\eqnsl{
\divv \vn  = 0.
}{divEqB1}
Also by taking imaginary parts of the system (\ref{vnEqA1}) - (\ref{bnEqA1}) and having in mind that initial data is real-valued is easy to check that solutions $(\vn, \on, \bn)$ are also real-valued. We will provide more details for $\vn$. Let us apply $\im $ to equation (\ref{vnEqA1}), multiply result by $\im \vn$ and integrate over $\td$ to obtain
\eqns{
\n{\partial_t \im \vn, \im \vn }  & 
+ \n{ \pn \left( \im \vn \cdot \nab \re \vn \right), \im \vn}
+ \n{ \pn \left( \re \vn \cdot \nab \im \vn \right), \im \vn }
\\ &
+ \n{ \pn \left( \overline{\nn} D \im \vn \right), D \im \vn}
- \n{ \im p_n , \divv (\im \vn) } = 0.
}
The last term on the left-hand side is zero due to (\ref{divEqB1}). Moreover, by  (\ref{nnA1}), (\ref{defPsi}), (\ref{defPhi}) we have
\eqns{
\jd \ddt \ndk{\im \vn}
+ \mnit \ndk{ D \im \vn }
& \le \nif{\nab \re \vn} \ndk{ \im \vn } 
\\ & \phantom{\le}
+ \nif{\re \vn} \nd{\im \vn } \nd{\nab \im \vn}.
}
By using Young inequality we get
\eqns{
\jd \ddt \ndk{\im \vn} 
+ \frac{\mnit}{2} \ndk{ \nabla \im \vn }
& \le \nif{\nab \re \vn} \ndk{ \im \vn }
+ \frac{1}{2 \mnit}\nif{\re \vn}^2 \nd{\im \vn }^2 \\ & + \frac{\mnit}{2} \nd{\nab \im \vn}^2.
}
Using Gr\"ownwall lemma and the fact that initial data is real-valued we conclude that
\eqns{
\ndk{\im \vn(t)} = 0 ~~~\forall t\in [0,T_n)
}
and thus that velocity is real-valued. A similar approach can be applied to $\on$ and $\bn$.
Thus, system (\ref{vnEqA1}) - (\ref{bnEqA1}) can be rewritten in a following way:
\eqnsl{
\divv \vn = 0,
}{divnEq}
\eqnsl{
\partial_t \vn + \pn \left(\vn \cdot \nab \vn \right) - \divv \n{ \pn  \left( \nn D \vn \right)} + \nabla p_n = 0,
}{vnEq}
\eqnsl{
\partial_t \on + \pn \left(\vn \cdot \nab \on \right) - \divv \n{ \pn  \left( \nn \nab \on \right)} = - \alpha \pn \left( \on^2 \right),
}{onEq}
\eqnsl{
\partial_t \bn + \pn \left(\vn \cdot \nab \bn \right) -\divv \n{ \pn  \left( \nn \nab \bn \right)} = - \pn \left(\bn  \on \right)+ \pn \left( \nn |D \vn |^2 \right),
}{bnEq}
\eqnsl{
\vn(0,x) = \pn v_0(x), ~~~~
\on(0,x) = \pn \om_0(x), ~~~~
\bn(0,x) = \pn b_0(x),
}{initn}
where:
\eqnsl{
\nn = \frac{\Pht (\bn) }{\Pst (\on)}.
}{defMn}
From the equation (\ref{approxDef}) it is clear that functions $(\vn, \on, \bn)$ are smooth with respect to spatial coordinates. By employing a standard iterative approach from the theory of ODE's ($C^k$ right-hand side implies $C^{k+1}$ solution) applied to the system (\ref{vnEqC1})-(\ref{bnEqC1}), it is easy to conclude that $(\vn, \on, \bn)$ are also smooth with respect to time.
\subsection{Energy estimates}
Before we start deriving energy estimates we need to establish some relations between $\ls$ and the derivative. Let $f:\td \rightarrow \mathbb{C}$, $w:\td \rightarrow \mathbb{C}^d$ such that $\pn f = f$ and $\pn w = w$. Let us recall that $\hat{f}(k) = \int_{\td} f(x) e^{-2 \pi i x \cdot k} dx$. Then we have
\eqns{
\frac{\partial \ls f}{\partial x_i}(x)
& = 
\frac{\partial}{\partial x_i}
\sum_{k \in \mathbb{Z}^d: |k|<n} \left( 1 + 4 \pi^2 |k|^2 \right)^{s/2} e^{2 \pi i x \cdot k} \hat{f}(k) \\
&  = 
\sum_{k \in \mathbb{Z}^d: |k|<n} \left( 1 + 4 \pi^2 |k|^2 \right)^{s/2} e^{2 \pi i x \cdot k} (2 \pi i k_i) \hat{f}(k).
}
Now using the properties of Fourier transform acting on a derivative we get 
\eqns{
&\frac{\partial \ls f}{\partial x_i}(x)
 = 
\sum_{k \in \mathbb{Z}^d: |k|<n} \left( 1 + 4 \pi^2 |k|^2 \right)^{s/2} e^{2 \pi i x \cdot k} \widehat{\n{\frac{\partial f}{\partial x_i}}}(k)
 = \ls \n{\frac{\partial f}{\partial x_i}}(x).
}
Thus we have
\eqnsl{
\nabla \ls f = \ls \nabla f, 
~~~~~~
D \ls f = \ls D f, 
~~~~~~
\ls \divv w = \divv \ls w, 
~~~~~~ 
\Delta \ls f = \ls \Delta f.
}{EstProp0}
Also, it can similarly be shown that
\eqnsl{
\pn \ls f = \ls \pn f.
}{EstProp0.5}
In the next part, we will be calculating the various inner products in $L^2(\td)$. Thus to simplify reasoning it is beneficial to observe that if function $f$ is a real-valued function so is $\ls f$. Before we proceed with energy estimates we need to introduce notation regarding constants dependent on time. Positive constants dependent on the time of the form $C(\eta_1, ...,\eta_m, t)$ used in the later parts of the proof can be expressed by
\eqnsl{
C(\eta_1, ...,\eta_m, t) = \widetilde{C}(\eta_1, ...,\eta_m) (1 + t)^\gamma
}{timeDepConstant}
for some $\gamma \in \mathbb{R}$.
Let us apply the $\ls$ operator to equation (\ref{vnEq}), multiply the result by $\ls \vn$, and integrate over $\td$. From this we obtain
\eqns{
\jd \ddt \nd{\ls \vn}^2  + \n{\ls \pn \n{ \vn \cdot \nab \vn}, \ls \vn} 
& - \n{\ls \divv \pn \n{\nn D \vn}, \ls \vn} \\
& = - \n{\ls \nabla p_n , \ls \vn}.
}
Using properties (\ref{EstProp0}), (\ref{EstProp0.5}), integration by parts, and the fact that $\divv \vn = 0$ we get 
\eqnsl{
\jd \ddt \nhs{\vn}^2 + \n{\ls \n{\nn D \vn}, \ls \nab \vn}
=
- \n{\ls \n{ \vn \cdot \nab \vn}, \ls \vn}.
}{EstVn0}
Now using H\"older inequality and Lemma \ref{Lem1.5} we get 
\eqnsl{
\left| \n{\ls \n{ \vn \cdot \nab \vn}, \ls \vn} \right| 
\le 
C \nhs{\vn}^2\nhs{\nab \vn}.
}{EstVn1} 
Using properties (\ref{EstProp0}) and integration by parts we get 
\eqns{
-\n{\ls \divv \n{ \nn D \vn}, \ls \vn}
=
\n{\ls \n{\nn D \vn},\nab \n{ \ls \vn}} .
}
Now, we rewrite the expression in a way that will enable us to use the commutator estimate:
\eqnsl{
\inld{\ls \left( \nn D \vn \right)}{\nab \ls \vn}
& =
\inld{\nn D \ls \vn}{\nab \ls \vn} 
+ \inld{ \left[ \ls, \nn \right] D \vn }{\nabla \ls \vn},  
}{EstVn1.1}
where $\left[ \ls, \nn \right] D \vn := \ls(\nn D \vn) - \nn \ls D \vn$. We want to estimate the above expression from below. Using H\"older inequality,  Lemmas \ref{Lem7}, \ref{hsNormOfGradient}, we get 
\eqnsl{
\inld{ \left[ \ls , \nn \right] D  \vn}{\nabla \ls \vn}
& \le 
C \left( \nif{\nabla \nn} \nhsmj{\nab \vn} + \nhs{\nn}\nif{\nab \vn} \right)
\nhs{\nab \vn} \\
& \le 
C \left( \nif{\nabla \nn} \nhs{\vn} + \nhs{\nn}\nif{\nab \vn} \right)
\nhs{\nab \vn}.
}{EstVn1.2}
For now, we will leave inequality in the above form. By definitions (\ref{h}), (\ref{defPsi}), (\ref{defPhi}) and (\ref{defMn})  we have  
\eqnsl{
\inld{\nn D \ls \vn}{\nab \ls \vn}
=
\inld{\nn D \ls \vn}{D \ls \vn} 
\geq 
\mnit \nhs{D \vn}^2.
}{EstVn1.99}
Let us rewrite the right-hand side in the following way 
\eqns{
\nhs{D \vn}^2
& = 
  \sum_{i,j=1}^d \n{ \frac{\n{\ls \vn}_{i,j} + \n{\ls \vn}_{j,i}}{2}, \frac{\n{\ls \vn}_{i,j} + \n{\ls \vn}_{j,i}}{2}} 
\\ & = 
\frac{1}{2} \sum_{i,j=1}^d 
\n{ \n{\ls \vn}_{i,j}, \n{\ls \vn}_{i,j}}
+
\n{ \n{\ls \vn}_{i,j}, \n{\ls \vn}_{j,i}}.
}
By performing integration by parts and using the fact that $\divv \vn = 0 $ in combination with (\ref{EstProp0}) and (\ref{EstVn1.99}) we get 
\eqns{
\inld{\nn D \ls \vn}{\nab \ls \vn}
\geq 
\jd \mnit \nhs{ \nab \vn}^2.
}
By combining (\ref{EstVn1.1}), (\ref{EstVn1.2}) and above inequality, we get 
\eqnsl{
\inhs{\nn D \vn}{\nab \vn} 
& \geq
\mnit \nhs{\nab \vn}^2
 - C \nif{\nabla \nn} \nhs{\vn} \nhs{\nab \vn} \\
& \phantom{\le} - C \nhs{\nn}\nif{\nab \vn} \nhs{\nab \vn}.
}{EstVn2}
Using estimates (\ref{EstVn1}) and (\ref{EstVn2}) in (\ref{EstVn0}) we get 
\eqns{
\jd & \ddt \nhs{\vn}^2 + \mnit \nhs{\nab \vn}^2 \\
& \le
C \Big( 
\nif{\nabla \nn} \nhs{\vn} \nhs{\nab \vn}
+ 
\nhs{\nn}\nif{\nab \vn} \nhs{\nab \vn} 
 +
\nhs{\vn}^2 \nhs{\nab \vn}
\Big).
}
Now, according to Lemma \ref{hsNormOfGradient} we can express $\nhs{\nab \vn}^2$ in the following way
\eqns{
\nhs{\nab \vn}^2 = \nhsj{\vn}^2 - \nhs{\vn}^2.
}
We can rewrite inequality in the following way:
\eqnsl{
\jd & \ddt \nhs{\vn}^2 + \mnit \nhsj{\vn}^2 \le  \mnit \nhs{\vn}^2 \\
& +
C \Big( 
\nif{\nabla \nn} \nhs{\vn} \nhsj{\vn}
+ 
\nhs{\nn}\nif{\nab \vn} \nhsj{\vn} 
+
\nhs{\vn}^2 \nhsj{\vn}
\Big).
}{EstVn3}
Now, we will proceed with acquiring the estimate on $\on$. Let us apply $\ls$ operator to the equation (\ref{onEq}) and take inner product with with $\ls \on$:
\eqnsl{
\jd \ddt \nhs{\on}^2 + \inhs{\nn \nab \on}{\nab \on} = - \inhs{\vn \cdot \nab \on}{\on} - \alpha \inhs{\on^2}{\on}.
}{EstOn0}
Proceeding as in (\ref{EstVn2}), we obtain   
\eqns{
\inhs{\nn \nab \on}{\nab \on}
=
\n{ \nn \ls \nab  \on,  \ls \nab \on}
+ 
\n{ \ls \n{\nn  \nab  \on} - \nn \ls \n{ \nab \on}, \ls \nab \on}.
}
Thus using H\"older inequality and Lemma \ref{Lem7} we get 
\eqnsl{
\inhs{\nn \nab \on}{\nab \on} 
& \geq
\mnit \nhs{\nab \on}^2 - 
C \nif{\nabla \nn} \nhs{\on} \nhs{\nab \on} \\
& 
\phantom{\le} - C \nhs{\nn}\nif{\nab \on} \nhs{\nab \on}.
}{EstOn1}
Now, using H\"older inequality in combination with Lemma \ref{Lem1.5}, we treat remaining nonlinearities in a following way: 
\begin{align}
\left |\inhs{\vn \cdot \nab \on}{\on} \right| 
& \le \nhs{\vn}\nhs{\on}\nhs{\nab \on},
\label{EstOn2}
\\
\left |\inhs{\on^2 }{\on} \right| 
& \le 
C \nhs{\on}^3.
\label{EstOn3}
\end{align}
Applying inequalities (\ref{EstOn1}), (\ref{EstOn2}), and (\ref{EstOn3}) to (\ref{EstOn0}) we get 
\eqns{
\jd \ddt \nhs{\on}^2 & + \mnit \nhs{\nab \on}^2 
\le
C \Big( 
\nif{\nabla \nn} \nhs{\on} \nhs{\nab \on} \\
& + 
\nhs{\nn}\nif{\nab \on} \nhs{\nab \on}
+
\nhs{\on}^3 + 
\nhs{\vn}\nhs{\on}\nhs{\nab \on}
\Big).
}
Using Lemma \ref{hsNormOfGradient}, we can rewrite it in a more suitable form
\eqnsl{
\jd \ddt \nhs{\on}^2 & + \mnit \nhsj{\on}^2 
\le
\mnit \nhs{\on}^2 + C \Big( 
\nif{\nabla \nn} \nhs{\on} \nhsj{\on} \\
& + 
\nhs{\nn}\nif{\nab \on} \nhsj{\on}
 +
\nhs{\on}^3 + 
\nhs{\vn}\nhs{\on}\nhsj{\on}
\Big).
}{EstOn4}  
Now estimates for $\bn$ will be provided. As before, let us apply $\ls$ operator to the equation (\ref{bnEq}) and take inner product with with $\ls \bn$:
\eqnsl{
\jd \ddt \nhs{\bn}^2 + \inhs{\nn \nab \bn}{\nab \bn} 
& = 
- \inhs{\vn \cdot \nab \bn}{\bn} 
- \inhs{\bn \on}{\bn} 
\\ & \phantom{=}
+ \inhs{\nn \dvnk}{\bn} .
}{EstBn0}
Proceeding as before, with the use of Lemmas \ref{Lem7} and \ref{Lem1.5} we get:
\begin{align}
\left |\inhs{\vn \cdot \nab \bn}{\bn} \right| 
& \le \nhs{\vn}\nhs{\bn}\nhs{\nab \bn},
\label{EstBn1} \\
\left |\inhs{\bn \on }{\bn} \right| 
& \le 
C \nhs{\bn}^2 \nhs{\on},
\label{EstBn2} \\
\inhs{\nn \nab \bn}{\nab \bn} 
& \geq
\mnit \nhs{\nab \bn}^2
-  
C 
\nif{\nabla \nn} \nhs{\bn} \nhs{\nab \bn}  
\label{EstBn3}
\\ & \phantom{\geq} - C\nhs{\nn}\nif{\nab \bn} \nhs{\nab \bn}. \nonumber
\end{align}
Now we provide the estimate for the last term of r.h.s. of (\ref{EstBn0}). Using Lemmas \ref{Lem1.5} and \ref{Lem1} we get
\eqnsl{
\left| \inhs{\nn \dvnk}{\bn} \right| 
& \le  \nhs{\nn} \nhs{\dvnk} \nhs{\bn} \\
& \le C \nhs{\nn} \nif{\nab \vn} \nhs{\nab \vn} \nhs{\bn}.
}{EstBn4}
Finally, by using estimates (\ref{EstBn1}), (\ref{EstBn2}), (\ref{EstBn3}), (\ref{EstBn4}) in (\ref{EstBn0}) and applying Lemma \ref{hsNormOfGradient} we get 
\eqnsl{
\jd & \ddt \nhs{\bn}^2 + \mnit \nhsj{\bn}^2 
\le
\mnit \nhs{\bn}^2 + C \bigg(
\nif{\nabla \nn} \nhs{\bn} \nhsj{\bn} 
\\ & 
+ \nhs{\nn}\nif{\nab \bn} \nhsj{\bn} 
+ \nhs{\bn}^2 \nhs{\on} 
+ \nhs{\vn}\nhs{\bn}\nhsj{\bn} 
\\ & 
+ \nhs{\nn} \nif{\nab \vn} \nhs{\nab \vn} \nhs{\bn}
\bigg).
}{EstBn7}
By summing inequalities (\ref{EstVn3}), (\ref{EstOn4}) and (\ref{EstBn7}) we get:
\eqnsl{
\jd & \ddt \nhs{\vn, \on, \bn}^2 + \mnit \nhsj{\vn, \on, \bn}^2 
\le
\mnit \nhs{\vn, \on, \bn}^2
\\&
 + C \bigg(
\nhs{\nn} \bk{\nif{\nab \vn} + \nif{\nab \on} + \nif{\nab \bn}} \nhsj{\vn, \on, \bn} 
+ \nhs{\vn, \on, \bn}^3
\\ &  
+\nif{\nab \nn} \nhs{\vn, \on, \bn} \nhsj{\vn, \on, \bn}
+\nhs{\vn, \on, \bn}^2 \nhsj{\vn, \on, \bn}
\\ & 
+ \nhs{\nn} \nif{\nab \vn} \nhs{\nab \vn} \nhs{\bn} \bigg),
}{EstSum0}
where $\nhs{\vn, \on, \bn}^2 = \nhs{\vn}^2 + \nhs{\on}^2 +\nhs{\bn}^2$. Now let us see that by definition (\ref{defMn}) and (\ref{defPsi})-(\ref{estiPhi}) we have 
\eqns{
\nif{\nab \nn} 
& = 
\nif{\nab \frac{\Pht (\bn) }{\Pst (\on)} }  
=
\nif{\frac{\Pht^{'} (\bn) }{\Pst (\on)} \nab \bn - \frac{\Pht (\bn) }{\Pst^2 (\on)} \Pst^{'} (\on) \nab \on } \\
& \le C(\omi, t) 
\bk{
\nif{\nab \bn} 
+ 
\nif{\Pht (\bn) } \nif{\nab \on}
} \\
& \le C(\omi, t) 
\bk{
\nif{\nab \bn} 
+ 
\bk{\bmi + \nif{\bn}} \nif{\nab \on},
}
}
where constant $C(\omi, t)$ is as in (\ref{timeDepConstant}).
By using Lemma \ref{Lem6} we get
\eqns{
\nif{\nab \nn} \le C(\omi, t) 
\bk{
\nif{\nab \bn} 
+ 
\bk{\bmi + \nhs{\bn}} \nif{\nab \on}
}.
}
To simplify expressions we will introduce polynomial notation: for $k \in \mathbb{R}_+$ we define $P_k(t)$ in a following way:
\eqnsl{
P_k(t) = \left(
1 + \nhs{\vn(t)}^2 + \nhs{\on(t)}^2 + \nhs{\bn(t)}^2
\right)^{k/2}.
}{defPk}
Thus we can write 
\eqnsl{
\nif{\nab \nn} 
& \le C(\omi, \bmi, t) P_1(t)
\bk{
\nif{\nab \bn} + \nif{\nab \on}
}.
}{pom__1}
Now using (\ref{pom__1}) and (\ref{defPk}) in (\ref{EstSum0}) we get
\eqnsl{
\jd \ddt & \nhs{\vn, \on, \bn}^2 + \mnit \nhsj{\vn, \on, \bn}^2 
\le
\mnit P_2(t) \\
& + C \bigg(
\nhs{\nn} \bk{\nif{\nab \vn} + \nif{\nab \on} + \nif{\nab \bn}} \nhsj{\vn, \on, \bn}
+ P_3(t) 
\\ & 
+ P_2(t) \bk{\nif{\nab \bn} + \nif{\nab \on}} \nhsj{\vn, \on, \bn} 
+ P_2(t) \nhsj{\vn, \on, \bn}
\\ & 
+ \nhs{\nn} \nif{\nab \vn} P_1(t) \nhsj{\vn}
\bigg).
}{EstSum0.95}
By using properties of $P_k$ we can write
\eqnsl{
& \hspace{-2mm}\jd \ddt \nhs{\vn, \on, \bn}^2 + \mnit \nhsj{\vn, \on, \bn}^2 
\le \mnit P_2(t) + C \bigg(P_2(t) \nhsj{\vn, \on, \bn}
\\ & \hspace{-0.5mm} + P_3(t) + 
\n{P_2(t) + \nhs{\nn}P_1(t) } \bk{\nif{\nab \vn} + \nif{\nab \on} + \nif{\nab \bn}} \nhsj{\vn, \on, \bn}
\bigg).
}{EstSum1}
Now, we will continue the proof with the assumption that $s \in \left( \frac{d}{2}, \frac{d}{2} + 1\right]$ - because in other cases we have $\nm{\nab f}{\infty} \le C \nhs{f}$ and subsequent estimates simplify. Thus by using Lemma \ref{Lem9} we get:
\eqnsl{
\hspace{-2mm}\jd & \ddt \nhs{\vn, \on, \bn}^2 + \mnit \nhsj{\vn, \on, \bn}^2 
\le \mnit P_2(t) + C \Big(P_2(t) \nhsj{\vn, \on, \bn}
\\ \hspace{-2mm} & + P_3(t) + \n{P_2(t) + \nhs{\nn} P_1(t)}
P_{\jd \n{s - \frac{d}{2}}}(t) \nhsj{\vn, \on, \bn}^{1 - \jd \n{s - \frac{d}{2}}}   
\nhsj{\vn, \on, \bn}
\Big).
}{EstSum2}
In the above inequality term $\nhs{\nn}$ remains not estimated. First, let us consider three auxiliary estimates that follow from Lemma \ref{Lem2}, (\ref{estiPsi}) and (\ref{defPsi})
\eqnsl{
\nhs{\Pht(\bn)} & \le \nhs{\Pht(\bn) - \jd \bmt} + \nhs{\jd \bmt} \\ 
& \le C \ncs{\Pht^{'}} \left( 1 + \nhs{\bn} \right)^{\lceil s \rceil} \nhs{\bn} + \nhs{\jd \bmt} \\
& \le C \left( 1 + \nhs{\bn} \right)^{\lceil s \rceil} \nhs{\bn} + C,
}{EstMu1}
where $C = C(s, \bmi, t) $ is rational function dependent on time $t$, finite $\forall t\geq 0$ (see (\ref{timeDepConstant})). Similarly from Lemma \ref{Lem2}, (\ref{estiPhi}) and (\ref{defPhi}) we get 
\eqnsl{
\nhs{\frac{1}{\Pst(\on)}} 
& \le 
\nhs{\frac{1}{\Pst(\on)} - \frac{1}{\jd \omt}} + \nhs{\frac{1}{\jd \omt}} \\ 
& \le C \ncs{\left(\frac{1}{\Pst}\right)^{'}} \left( 1 + \nhs{\on} \right)^{\lceil s \rceil} \nhs{\on} + \nhs{\frac{2}{\omt}} \\
& \le C \left( 1 + \nhs{\on} \right)^{\lceil s \rceil} \nhs{\on} + C,
}{EstMu3}
where $C = C(s, \omi, \oma, t) $ is as in (\ref{timeDepConstant}). Now using obtained estimates in combination with Lemmas \ref{Lem1}, \ref{Lem6} and definitions (\ref{defPhi}), (\ref{defMn}) we can proceed with estimates on $\nhs{\nn}$ in the following way:
\eqnsl{
\nhs{\nn} 
& = \nhs{\frac{\Pht (\bn) }{\Pst (\on)}}
\le \nhs{\Pht(\bn)} \nif{\frac{1}{\Pst (\on)}} + \nif{\Pht(\bn)} \nhs{\frac{1}{\Pst (\on)}} \\
& \le \frac{2}{\omt} \nhs{\Pht(\bn)} + \nhs{\Pht(\bn)} \nhs{\frac{1}{\Pst (\on)}} \\
& \le C_1 
\left( \left( 1 + \nhs{\bn} \right)^{\lceil s \rceil} \nhs{\bn} + 1 \right)
\cdot  \left(\left( 1 + \nhs{\on} \right)^{\lceil s \rceil} \nhs{\on} + 1\right) \\
& \le C_1 P_{2\lceil s \rceil + 2}(t),
}{EstMu4}
where $C_1 = C_1(\omi, \oma, \bmi, t)$ is as in (\ref{timeDepConstant}). 
Now let us use estimate (\ref{EstMu4}) in (\ref{EstSum2}):
\eqns{
 \jd \ddt \nhs{\vn, \on, \bn}^2 & + \mnit \nhsj{\vn, \on, \bn}^2 
\le
\mnit P_2(t) + C \Big( 
P_2(t) \nhsj{\vn, \on, \bn}
\\ &  
+ P_3(t) + \n{P_2(t)  + P_{2\lceil s \rceil + 3}(t)} P_{\jd \n{s - \frac{d}{2}}}(t) \nhsj{\vn, \on, \bn}^{2 - \jd \n{s - \frac{d}{2}}} 
\Big).
}
Using the fact that for $k_1 \geq k_2$ we have $P_{k_1} \geq P_{k_2}$ and Young's inequality we get
\eqnsl{
\jd & \ddt \nhs{\vn, \on, \bn}^2 + \mnit \nhsj{\vn, \on, \bn}^2 
\le
C(\bmi, \omi, \oma, t) 
\\ &  \cdot \bigg( 
P_{2\lceil s \rceil + 3 + \jd \n{s - \frac{d}{2}}}(t)
\nhsj{\vn, \on, \bn}^{2 - \jd \n{s - \frac{d}{2}}} 
+ P_3(t)
+ P_2(t) \nhsj{\vn, \on, \bn}
 \bigg).
}{EstSum3}
Now, let us apply Young's inequality to the right-hand side with coefficients
\eqns{ 
\bk{\frac{4}{s - \frac{d}{2}}, \frac{2}{2 - \jd \bk{s - \frac{d}{2}}}}, 
~~~~
\bk{2, 2}
} to obtain
\eqns{
\ddt \nhs{\vn, \on, \bn}^2 + \mnit \nhsj{\vn, \on, \bn}^2
& \le
C(\bmi, \omi, \oma, t) 
 \\ & \cdot 
\bigg(
  P_{\bk{2\lceil s \rceil + 3 + \jd \n{s - \frac{d}{2}}} \frac{4}{s - \frac{d}{2}}}(t) 
+ P_4(t) + P_3(t) \bigg).
}
Now, let us introduce $\beta(s)>1$ such that
\eqns{ 
2\beta(s) = \max \left\{ 
 4,
 \bk{2\lceil s \rceil + 3 + \jd \n{s - \frac{d}{2}}} \frac{4}{s - \frac{d}{2}}
\right\}.
} Thus we have
\eqnsl{
\ddt & \nhs{\vn, \on, \bn}^2 + \mnit \nhsj{\vn, \on, \bn}^2
\le 
C(\bmi, \omi, \oma, s, t) P_{2\beta}(t).
}{EstSum5}
Thus by definition (\ref{defPk}) we get
\eqnsl{
\ddt \left( 1 + \nhs{\vn, \on, \bn}^2 \right) & + \mnit \nhsj{\vn, \on, \bn}^2 \\
& \le C(\bmi, \omi, s, t)
  \left( 1 + \nhs{\vn, \on, \bn}^2 \right)^\beta.
}{EstSum6}
By integrating from $0$ to $t$ we get inequality 
\eqns{
\frac{1}{-\beta + 1}\left( 1 + \nhs{\vn, \on, \bn}^2 \right)^{-\beta + 1}
& \le 
\frac{1}{-\beta + 1}\left( 1 + \nhs{\pn v_0, \pn \om_0, \pn b_0}^2 \right)^{-\beta + 1} \\
&  + \int_0^t C(\bmi, \omi, \oma, s, \tau) d \tau.
}
After some manipulations, we obtain a uniform estimate for approximated solutions
\eqnsl{
\hspace*{-2.4mm} 
\nhs{\vn, \on, \bn}^2 &
\le \frac{1}{ \n{
\left( 1 + \nhs{v_0, \om_0, b_0}^2 \right)^{1-\beta}
- (\beta - 1) \int_0^t C(\bmi, \omi, \oma,  s, \tau) d \tau}^\frac{1}{\beta - 1}} \\ & \phantom{\le}
 -1
}{EstSum7}
provided the denominator on the right-hand side is positive. Thus, let us define existence time $T$ such that the following equality holds 
\eqnsl{
(1 - 2^{-\beta + 1}) \left( 1 + \nhs{v_0, \om_0, b_0}^2 \right)^{-\beta + 1}
 = (\beta - 1) \int_0^T C(\bmi, \omi, \oma, s, \tau) d \tau. 
}{EstSum8}
Using (\ref{EstSum8}) in (\ref{EstSum7}) we derive the estimate:
\eqnsl{
\nhs{\vn, \on, \bn}^2
\le 2 \nhs{v_0, \om_0, b_0}^2 + 1 ~~~~\forall t \in [0,T].
}{EstSumF1} 
Additionally (\ref{EstSum6}) and (\ref{EstSumF1}) imply that:
\eqnsl{
\int_0^T \nhsj{\vn, \om, \bn }^2 d \tau \le C \left(\bmi, \omi, \oma, \nhs{v_0, \om_0, b_0}, s, T \right) < \infty.
}{EstSumF2}
To show the continuity of the solution, the estimate in the norm $L^2(0,T,H^{s-1}(\td) )$ for the time derivative of the solution is required. 
We will derive the estimate only for $\bn$ as the calculations for other variables are similar. Let us apply $J^{s-1}$ to equation (\ref{bnEq}) and calculate inner product with $\partial_t J^{s-1} \bn$
\eqns{
\nhsmj{\partial_t \bn}^2 
& =
- \inh{\vn \nab \bn}{\partial_t \bn }{s-1}
+ \inh{\nab \cdot \left(\nn \nab \bn \right)}{\partial_t \bn }{s-1}
 -\inh{\bn \on }{\partial_t \bn}{s-1}
\\ & \phantom{=}
+ \inh{\nn \dvnk }{\partial_t \bn}{s-1}.
}
Using H\"older and Young inequality we get 
\eqns{
\nhsmj{\partial_t \bn}^2 
& \le C \bigg(
  \nhsmj{\vn \nab \bn}^2
+ \nhsmj{\nab \cdot \left(\nn \nab \bn \right)}^2
+ \nhsmj{\bn \on }^2 
\\ & \phantom{\le}
+ \nhsmj{\nn \dvnk }^2
\bigg).
} 
By Lemmas \ref{hsNormOfGradient} and \ref{Lem10} it easily follows
\eqnsl{
\nhsmj{\partial_t \bn}^2 
& \le C \bigg(
  \nhs{\vn \nab \bn}^2
+ \nhs{\nn \nab \bn}^2
+ \nhs{\bn \on }^2 
+ \nhsmj{\nn \dvnk }^2
\bigg).
}{bt_estim_eq1}
The most troublesome term to estimate is the last one on the right-hand side. Let us concentrate on it first. Let us chose $\varepsilon \geq 0$ in the following way
\begin{equation} 
\left\{
\begin{array}{lr}
\varepsilon \in (0, \min \{ s - \frac{d}{2}, 1 \}) & \text{for } d = 2\\
\varepsilon = 0 & \text{for } d \geq 3
\end{array}  
\right. .
\end{equation}
Based on Lemma \ref{Lem1} we have
\eqns{
\hspace{-2mm}
\nhsmj{\nn \dvnk } \le C \left( 
\norm{J^{s-1} \nn}_{\frac{d}{\varepsilon + \frac{d}{2} - 1}}  
\norm{|D \vn|^2}_{\frac{d}{1 - \varepsilon}}    
+
\norm{\nn}_{\infty}  
\norm{J^{s-1} (|D \vn|^2)}_{2}  
\right).
}
By applying Lemma \ref{Lem1} to the last term on the right-hand side we get
\eqns{
\hspace{-2mm}
\nhsmj{\nn \dvnk } \le C \left( 
\norm{J^{s-1} \nn}_{\frac{d}{\varepsilon + \frac{d}{2} - 1}}  
\norm{D \vn}_{\frac{d}{1 - \varepsilon}}   
+
\norm{\nn}_{\infty}  
\norm{J^{s-1} D \vn}_{2}  
\right)
\norm{D \vn}_{\infty} .
}
Let us observe that based on Lemma \ref{Lem11} we have
\begin{align}
\norm{J^{s-1} \nn}_{\frac{d}{\varepsilon + \frac{d}{2} - 1}} 
& \le C
\norm{J^{s-\varepsilon} \nn}_{2},
\\
\norm{D \vn}_{\frac{d}{1 - \varepsilon}}
& \le C
\norm{J^{\frac{d}{2}+\varepsilon} \vn}_{2}.
\end{align}
Using the above estimates and Lemmas \ref{Lem6}, \ref{hsNormOfGradient} we get 
\eqns{
\nhsmj{\nn \dvnk } \le C \left( 
\norm{J^{s-\varepsilon} \nn}_{2}  
\norm{J^{\frac{d}{2} + \varepsilon} \vn}_{2}  
+
\nhs{\nn}  
\nhs{\vn}
\right) \nhsj{\vn}  .
}
Using Lemma \ref{Lem10} we get
\eqns{
\nhsmj{\nn \dvnk } 
\le C
\nhs{\nn}  
\nhs{\vn}  
\nhsj{\vn} .
}
Using the above result in (\ref{bt_estim_eq1}) and Lemmas \ref{Lem1.5}, \ref{hsNormOfGradient} we get  
\eqns{
\nhsmj{\partial_t \bn}^2
& \le C \bigg(
  \nhs{\vn}^2 \nhsj{\bn}^2
+ \nhs{ \nn }^2  \nhsj{ \bn }^2
+ \nhs{ \bn }^2  \nhs{ \on }^2 
\\ & \phantom{\le}
+ \nhs{\nn}^2  
\nhs{\vn}^2  
\nhsj{\vn}^2
\bigg).
}  
Right hand side of above expression is in $L^1(0,T)$ (due to (\ref{EstSumF1}), (\ref{EstSumF2}) and (\ref{EstMu4})) thus the estimate was proven. To conclude time derivative estimates are as follows
\eqnsl{
\norm{\partial_t \vn, \partial_t \on, \partial_t \bn}_{L^2(0,T,H^{s-1}(\td) )}
\le C \left(\bmi, \omi, \oma, \nhs{v_0, \om_0, b_0}, s, T \right) < \infty.
}{EstDt}
\subsection{Passage to the limit in approximate system, regularity of solution, uniqueness}
Using estimates (\ref{EstSumF1}), (\ref{EstSumF2}) and (\ref{EstDt}) we can conclude existence of sub-sequence $\{n_k \}$ (relabelled as $n$) such that:
\begin{align}
\vn & \rightarrow v & & \text{~~~~weakly in } L^2(0,T,H^{s+1}_{\divv}(\td)) ,
\label{ConvFunc1} \\
\om, \bn & \rightarrow \om , b & & \text{~~~~weakly in } L^2(0,T,H^{s+1}(\td)) , 
\label{ConvFunc2} \\
\partial_t \vn, \partial_t \om, \partial_t \bn 
& \rightarrow 
\partial_t v, \partial_t \om , \partial_t b
& & \text{~~~~weakly  in } L^2(0,T,H^{s-1}(\td)) , \label{ConvDT} \\
\vn, \om, \bn & \rightarrow v, \om , b & & \text{~~~~weakly$*$ in } L^\infty(0,T,H^{s}(\td)).
\label{ConvFunc3} 
\end{align}
Additionally, from the Aubin-Lions lemma, it follows that
\begin{align}
\vn, \om, \bn & \rightarrow v, \om , b & & \text{strongly in } L^2(0,T,H^{s^{\prime}+1}(\td)),
\label{AL1} \\
\vn, \om, \bn & \rightarrow v, \om , b & & \text{strongly in } C(0,T,H^{s^{\prime}}(\td))
\label{AL2}.
\end{align}
for all $s^{\prime} < s$. It is easy to see that 
\eqnsl{
\nn & \rightarrow \mu = \frac{\Pht (b) }{\Pst (\om)} \text{\quad \quad \quad strongly in } C(0,T,H^{s^{\prime}}(\td))
}{convMun}
holds for all $\frac{d}{2} < s^{\prime} < s$. Indeed, we see that with the help of triangle inequality and Lemma \ref{Lem1.5} we get
\eqns{
\nhsprim{\nn - \mu} 
& = 
\nhsprim{
\frac{\Pht (\bn) - \Pht (b) }{\Pst (\on)}
-
\Pht (b) \frac{\Pst (\on) - \Pst (\om) }{\Pst (\om) \Pst (\on)}
} \\
& \le C \bigg(
\nhsprim{\frac{1}{\Pst (\on)}} \nhsprim{\Pht (\bn) - \Pht (b) } \\
& \quad +
\nhsprim{\Pht (b)} \nhsprim{\frac{1}{\Pst (\on)}} \nhsprim{\frac{1}{\Pst (\om)}} \nhsprim{\Pst (\on) - \Pst (\om) }
\bigg).
}
From (\ref{EstMu1}), (\ref{EstMu3}) we see that $\sup_{n\in \mathbb{N}} \suppess_{t \in [0,T]} \nhsprim{\frac{1}{\Pst (\on(t))}}$ is finite. Also using the same reasoning presented in (\ref{EstMu1}), (\ref{EstMu3}) we can conclude that $\suppess_{t \in [0,T]} \nhsprim{\frac{1}{\Pst (\om(t))}}$ and $\suppess_{t \in [0,T]} \nhsprim{\Pht (b(t))}$ are finite. Thus to prove (\ref{convMun}) it is sufficient to show that
\eqns{
\Pst(\on), \Pht(\bn) & \rightarrow \Pst(\om) , \Pht(b) & \text{strongly in } C(0,T,H^{s^{\prime}}(\td)).
}
This, however, holds based on (\ref{AL2}), (\ref{defPsi}), (\ref{defPhi}) and Lemma \ref{Lem2.5}. \newline
Having convergence results, we may pass to the limit in (\ref{vnEq}) - (\ref{bnEq}). It is easy to see that $v$, $\om$, $b$ satisfy:
\begin{align}
\n{\partial_t v, w} + \n{v \cdot \nab v, w} + \left(  \mu D v, Dw \right) & = 0 
& & \forall w \in H^1_{\divv} (\td), 
\label{vnEqF} \\
\n{\partial_t \om, z} + \n{ v \cdot \nab \om, z}  + \left( \mu \nab \om, \nabla z \right) 
& = - \alpha \n{\om^2, z}
& & \forall z \in H^1 (\td),
\label{omnEqF} \\
\n{\partial_t b, q} + \n{v \cdot \nab b, q} + \left( \mu \nab b, \nabla q \right) 
& = -  \n{ b  \om, q} + \n{\mu |D v |^2, q}
& & \forall q \in H^1 (\td)
\label{bnEqF}
\end{align}
for a.a. $t \in (0,T)$.
We will provide more details for the most troublesome term. First, we wish to establish the convergence 
\eqnsl{
\int_0^T \n{\nn |D\vn|^2, \psi } dt 
\overset{n \rightarrow \infty}{\longrightarrow}
\int_0^T \n{\mu |Dv|^2, \psi } dt,
}{weakFormConv1}
where $\psi \in L^2(0,T,H^1(\td))$. We see that 
\eqns{
\left|\int_0^T \n{\nn |D\vn|^2, \psi } dt 
-
\int_0^T \n{\mu |Dv|^2, \psi } dt
\right|
& \le 
\left|\int_0^T \n{\nn \n{D\vn - Dv} \n{D\vn + Dv}, \psi } dt \right| 
\\ & +
\left|\int_0^T \n{\n{\nn - \mu} |Dv|^2, \psi } dt \right|.
}
Let us first focus on the first term of the right-hand side
\eqns{
\left|\int_0^T \n{\nn D(\vn - v) D(\vn + v), \psi} dt \right| 
& \le 
\int_0^T \nif{\nn} \nif{D(\vn - v)} \nd{D(\vn + v)} \nd{\psi} dt \\
& \le \int_0^T \nhs{\nn} \norm{\vn - v}_{H^{s^\prime + 1}} \norm{\vn + v}_{H^{s^\prime}} \nd{\psi} dt,
} 
where $s^\prime \in \n{\frac{d}{2}, s}$. With the help of H\"older inequality and (\ref{AL1}), (\ref{AL2}), (\ref{convMun}) we get
\eqns{
& \left|\int_0^T \n{\nn D(\vn - v) D(\vn + v), \psi} dt \right| \\
& \quad \quad \quad \le 
\left( \int_0^T \nhsprim{\nn}^2 \nhsprim{\vn + v}^2 \nd{\psi}^2 dt \right)^\jd
\norm{\vn - v}_{L^2(0,T, H^{s^\prime +1})} \overset{n \rightarrow \infty}{\longrightarrow} 0.
} 
By using Lemma \ref{Lem6} we have
\eqns{
\left|\int_0^T \n{\n{\nn - \mu} |Dv|^2, \psi } dt \right| 
& \le 
\int_0^T \nif{\nn - \mu} \nif{Dv} \nd{Dv} \nd{\psi} dt \\
& \le 
\norm{\nn - \mu}_{C(0,T,H^{s^\prime})} \int_0^T \nhsj{v} \nhs{v} \nd{\psi} dt.
}
Thus from (\ref{AL2}), (\ref{ConvFunc3}), (\ref{ConvFunc1}) we get 
\eqns{
\left|\int_0^T \n{\n{\nn - \mu} |Dv|^2, \psi } dt \right| \overset{n \rightarrow \infty}{\longrightarrow} 0
}
and thus (\ref{weakFormConv1}) is proven. \newline
Following the same procedure as presented in \cite{KoKu} or \cite{BuM} one can show that:
\eqns{
\omt \le \omega(t,x) \le \ommt ~~~~\text{for a.e.}~~ (x,t) \in \td \times [0,T]
}
and
\eqns{
\bmt \le b  ~~~~\text{for a.e.}~~ (x,t) \in \td \times [0,T].
} 
Thus due to Definitions (\ref{defPhi}) and (\ref{defPsi}) we have 
\eqns{
\mu = \frac{b}{\om}
}
and thus $(v, \om, b)$ solve system (\ref{divEq})-(\ref{bEq}).
Now we will show the continuity of the solution in $\hs$ norm. It is clear that $\left[ H^{s-1}(\td), H^{s+1}(\td) \right]_\frac{1}{2} = \hs$. Thus from (\ref{ConvFunc2}), (\ref{ConvDT}) and Lions-Magenes Lemma (see Theorem II.5.14 from \cite{Boye2013}) we can conclude 
that 
\eqns{
(v,\om,b) \in C\n{[0,T],\hs}.
}
The uniqueness of the obtained solution easily follows from Theorem \ref{Th1b}.
\section{Proof of Theorem \ref{Th1b}}
First, we would like to conclude that
\eqnsl{
\omt \le \omega_i(t,x) \le \ommt, ~~~
\bmt \le b_i(t,x)  ~~~\text{for a.e.}~~~ (x,t) \in \td \times [0,T], ~~~j=1,2,
}{proofUniq:1}
to have proper bounds on viscosity term $\mu_i = \frac{b_i}{\om_i}$. We see that $b_i$ and $\om_i$ are continuous functions based on Lemma \ref{Lem6}.
Suppose that $\omt > \omega_i(t^*,x^*)$ for some $(t^*,x^*) \in [0,T] \times \td$ such that $\om_i(t,x)> C^*_\omega > 0$ for all $(t,x) \in [0,t^{*}]\times \td$. Then, by following the procedure presented in \cite{KoKu} (starting from Eq.: 96) we obtain the contradiction.
The same reasoning can be conducted for $b_i$. To prove the upper bound on $\om_i$ we proceed as in \cite{KoKu} (starting from Eq.: 98). \\ 
Let us denote by $\delta_v = v_2 - v_1 $, $\delta_\om = \om_2 - \om_1 $, $\delta_b = b_2 - b_1 $. Thus differences $(\delta_v, \delta_\om, \delta_b)$ satisfy the following system of equations  
\eqnsl{
(\partial_t \delta_v, w) + \left( \frac{b_1}{\om_2} D \delta_v, Dw \right) 
& = 
- (v_2 \nab \delta_v, w) - (\delta_v \nabla v_1, w)
- \left( \frac{\delta_b}{\om_2} D v_2, Dw \right) 
\\ & \phantom{=}
+ \left( \frac{b_1 \delta_\om}{\om_1 \om_2} D v_1, Dw \right),
}{deltavEq}
\eqnsl{
(\partial_t \delta_\om, z) + \left( \frac{b_1}{\om_2} \nab \delta_\om, \nab z \right)  
& =
- (v_2 \nab \delta_\om, z) 
- (\delta_v \nabla \om_1, z)
- \left( \frac{\delta_b}{\om_2} \nab \om_2, \nab z \right)
\\ & \phantom{=}
+ \left( \frac{b_1 \delta_\om}{\om_1 \om_2} \nab \om_1, \nab z \right) 
  -\alpha (\delta_\om (\om_2 + \om_1), z),
}{deltaoEq}
\eqnsl{
~~(\partial_t \delta_b, z) + \left( \frac{b_1}{\om_2} \nab \delta_b, \nab z \right)
& =
- (v_2 \nab \delta_b, z) - (\delta_v \nabla b_1, z)
- \left( \frac{\delta_b}{\om_2} \nab b_2, \nab z \right)
\\ & \phantom{=} 
+ \left( \frac{b_1 \delta_\om}{\om_1 \om_2} \nab b_1, \nab z \right) 
 -(\delta_b \om_2, z) - (b_1 \delta_\om , z)
\\ & \phantom{=}
+ \left( \frac{\delta_b}{\om_2} |D v_2|^2, z\right)
+ \left( \frac{b_1}{\om_2} D \delta_v (D v_2 + D v_1), z\right)
\\ & \phantom{=}
- \left( \frac{b_1 \delta_\om}{\om_2 \om_1} |D v_1|^2, z\right).
}{deltabEq}
Now, we test equation (\ref{deltavEq})-(\ref{deltabEq}) with $\delta_v$, $\delta_\om$, $\delta_b$ to obtain estimate of differences in $L^2$ norm. We will show the procedure for $\delta_b$, rest are analogous
\eqns{
\jd \ddt \ndk{ \delta_b} &  + \left( \frac{b_1}{\om_2} \nab \delta_b, \nab \delta_b \right) = - (v_2 \nab \delta_b, \delta_b) - (\delta_v \nabla b_1, \delta_b)
- \left( \frac{\delta_b}{\om_2} \nab b_2, \nab \delta_b \right)
\\ &
+ \left( \frac{b_1 \delta_\om}{\om_1 \om_2} \nab b_1, \nab \delta_b \right)
- (\delta_b \om_2, \delta_b) - (b_1 \delta_\om , \delta_b)
+ \left( \frac{\delta_b}{\om_2} |D v_2|^2, \delta_b\right)
\\ &
+ \left( \frac{b_1}{\om_2} D \delta_v (D v_2 + D v_1), \delta_b\right)
- \left( \frac{b_1 \delta_\om}{\om_2 \om_1} |D v_1|^2, \delta_b\right) .
}
Using (\ref{proofUniq:1}) and H\"older inequality we get
\eqns{
\jd \ddt \ndk{ \delta_b} 
& + \mnit \ndk{\nab \delta_b}
\le 
  \nd{\delta_v} \nif{\nabla b_1} \nd{\delta_b}
+ \nif{\frac{1}{\om_2}} \nd{\delta_b} \nif{\nab b_2} \nd{\nab \delta_b}
\\ & 
+ \nif{b_1} \nd{ \delta_\om} \nif{\frac{1}{\om_1 \om_2}} \nif{\nab b_1} \nd{\nab \delta_b } 
+ \nif{b_1} \nd{\delta_\om} \nd{\delta_b} 
\\ &
+ \nif{\frac{1}{\om_2}} \nif{D v_2}^2 \ndk{\delta_b}
+ \nif{\frac{b_1}{\om_2}} \nd{D \delta_v} \nif{D v_2 + D v_1} \nd{ \delta_b } \\
& + \nif{\frac{1}{\om_2 \om_1}} \nif{b_1} \nif{D v_1}^2  \nd{\delta_\om} \nd{\delta_b }. 
}
Using (\ref{proofUniq:1}) and Lemma \ref{Lem6} we get 
\eqns{
\jd \ddt \ndk{ \delta_b} & + \mnit \ndk{\nab \delta_b}
\le 
\nhsj{b_1} \left(\ndk{\delta_v} 
+ \ndk{\delta_b} \right)
+ C(\omt) \nhsj{b_2}^2 \ndk{\delta_b} 
\\ & 
+ \frac{\mnit}{6} \ndk{\nab \delta_b} 
+ C(\omt) \nhs{b_1}^2  \nhsj{b_1}^2 \ndk{ \delta_\om} 
+ \frac{\mnit}{6} \ndk{\nab \delta_b }
\\ & 
+ \nhs{b_1} \left(\ndk{\delta_\om} + \ndk{\delta_b} \right) 
+ C(\omt) \nhsj{v_2}^2 \ndk{\delta_b} \\
& + C(\omt)\nhs{b_1}^2  \left(\nhsj{v_2}^2 + \nhsj{v_1}^2 \right) \ndk{ \delta_b } + \frac{\mnit}{3} \ndk{D \delta_v} \\
& + C(\omt) \nhs{b_1} \nhsj{v_1}^2  \left( \ndk{\delta_\om} + \ndk{\delta_b } \right) .
}
Thus using the information about the regularity of solutions we can write
\eqnsl{
\jd \ddt \ndk{ \delta_b} 
+ \mnit \ndk{\nab \delta_b}
& \le G_b(t) \left(\ndk{\delta_v} + \ndk{\delta_\om} + \ndk{\delta_b} \right)
\\ & \phantom{\le}
+ \frac{\mnit}{3} \left( \ndk{\nab \delta_v } + \ndk{\nab \delta_\om } + \ndk{\nab \delta_b } \right), 
}{deltabEst}
where function belongs $G_b \in L^1(0,T)$. Analogously there exist functions $G_\om, G_v \in L^1(0,T)$ such that
\eqnsl{
\jd \ddt \ndk{ \delta_\om} 
+ \mnit \ndk{\nab \delta_\om}
& \le G_\om(t) \left(\ndk{\delta_v} + \ndk{\delta_\om} + \ndk{\delta_b} \right)
\\ & \phantom{\le}
+ \frac{\mnit}{3} \left( \ndk{\nab \delta_v } + \ndk{\nab \delta_\om } + \ndk{\nab \delta_b } \right), 
}{deltaoEst}
\eqnsl{
\jd \ddt \ndk{ \delta_v} 
+ \mnit \ndk{\nab \delta_v}
& \le G_v(t) \left(\ndk{\delta_v} + \ndk{\delta_\om} + \ndk{\delta_b} \right)
\\ & \phantom{\le}
+ \frac{\mnit}{3} \left( \ndk{\nab \delta_v } + \ndk{\nab \delta_\om } + \ndk{\nab \delta_b } \right).
}{deltavEst}
By summing (\ref{deltavEst}), (\ref{deltaoEst}) and (\ref{deltabEst}) we get 
\eqns{
\jd \ddt \left(\ndk{\delta_v} + \ndk{\delta_\om} + \ndk{\delta_b} \right) 
\le G(t) \left(\ndk{\delta_v} + \ndk{\delta_\om} + \ndk{\delta_b} \right), 
}
where $G = G_v + G_\om + G_b \in L^1(0,T)$. From Gr\"ownwall inequality follows that $\delta_v(t,x) = 0$, $\delta_\om(t,x) = 0$, $\delta_b(t,x) = 0$ for a.a. $(x,t) \in \td \times [0,T]$. This concludes the proof of uniqueness. 

\subsubsection*{Acknowledgements}
The research was funded by (POB Cybersecurity and data analysis) of Warsaw University of Technology within the Excellence Initiative: Research University (IDUB) programme.
Additionally, the author would like to thank dr Adam Kubica for insightful discussions, which helped with writing this publication.

\section{Appendix I}
To prove Lemma \ref{Lem7} we will utilise some results from pseudo-differential operator theory. In the following definitions, we introduce the needed apparatus. 
\subsection{Definitions and theorems of pseudo-differential operator theory}
\begin{de}[see \cite{Card2019}] \label{defOperatorTm}
Let $m: \mathbb{T}^d \times \mathbb{Z}^{dr} \rightarrow \mathbb{C}$ is a measurable function usually referred to as a symbol. Then, the periodic multi-linear pseudo-differential operator associated with a symbol $m$ is the multilinear operator defined by 
\eqns{
T_m(f)(x) = \sum_{\xi \in \mathbb{Z}^{dr}} e^{i 2 \pi \langle x , \xi_1 + \xi_2 + \dots + \xi_r \rangle} m(x, \xi) 
\hat{f_1}(\xi_1)
\hat{f_2}(\xi_2)
\dots
\hat{f_r}(\xi_r),
}
where $x\in \mathbb{T}^d$, $\xi = (\xi_1, \xi_2, \dots, \xi_r)$, $f = (f_1, f_2, \dots, f_r) \in \mathcal{D}(\mathbb{T}^d)^r$ and
\eqns{
\hat{f}(\xi_i) = 
\int_{\mathbb{T}^d} e^{- i 2 \pi \langle x , \xi_i \rangle} f_i(x) dx
}
\end{de}
\begin{de}[see Definition 3.3.1 in \cite{Ruzh2010}] \label{defDifferenceOperator}
Let $\sigma:\mathbb{Z}^d \rightarrow \mathbb{C}$ and $1 \le i,j\le d$. Let $\delta_j \in \mathbb{N}^d$ be defined by
\eqns{
\n{\delta_j}_i
 = \left\{
\begin{array}{cll}
1, & \m{ if } i = j, \\ 
0, & \m{ if } i \neq j. \\
\end{array}  \right. 
}
We define the forward difference operator $\Delta_{\xi_j} $ by
\eqns{
\Delta_{\xi_j}\sigma (\xi) := \sigma(\xi + \delta_j) - \sigma(\xi)
}
and for $\alpha \in \mathbb{N}^d$ we define
\eqns{
\Delta_{\xi}^\alpha := \Delta_{\xi_1}^{\alpha_1} \dots \Delta_{\xi_d}^{\alpha_d}.
}
\end{de}
\begin{theorem}[see Theorem 3.3 in \cite{Card2019}] \label{thCoifMay}
Assume that $m: \mathbb{T}^d \times \mathbb{Z}^{dr} \rightarrow \mathbb{C}$ is a measurable function that satisfies the discrete symbol inequalities 
\eqnsl{
\sup_{x \in \mathbb{T}^d} 
\left| 
\Delta_{\xi_1}^{\alpha_1}
\Delta_{\xi_2}^{\alpha_2}
\dots
\Delta_{\xi_r}^{\alpha_r}
m(x, \xi_1, \xi_2, \dots, \xi_r)
\right|
\le 
\frac{C_\alpha}{(1+|\xi_1|^2 + \dots +|\xi_r|^2)^{\frac{|\alpha|}{2}}}
}{differenceCondition} 
for all $|\alpha| = |\alpha_1| + |\alpha_2| + \dots + |\alpha_r| \le \left[ \frac{3dr}{2} \right] + 1$. Then, the periodic multi-linear pseudo-differential operator $T_m$ (see Definition \ref{defOperatorTm})
extends to a bounded operator form $L^{p_1}(\mathbb{T}^d)\times L^{p_2}(\mathbb{T}^d)\times \dots L^{p_r}(\mathbb{T}^d)$ into $L^{p}(\mathbb{T}^d)$ provided that 
\eqns{
\frac{1}{p} = \frac{1}{p_1} + \frac{1}{p_2} + \dots \frac{1}{p_r},
~~ 1 < p < \infty,
~~ 1 < p_i \le \infty. 
}
\end{theorem}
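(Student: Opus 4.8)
The assertion is the periodic, $x$-dependent version of the multilinear Coifman--Meyer multiplier theorem, in which the finite-difference bounds (\ref{differenceCondition}) play the role of the classical symbol estimates $|\partial_\xi^\alpha\sigma(\xi)|\leq C_\alpha(1+|\xi|)^{-|\alpha|}$. The plan is to prove it directly, following the Coifman--Meyer scheme: a Littlewood--Paley decomposition in the \emph{joint} frequency variable $\Xi=(\xi_1,\dots,\xi_r)\in\mathbb{Z}^{dr}$, a Fourier-series expansion of each dyadic piece, and a reduction to elementary paraproduct-type operators estimated by H\"older's inequality together with the Littlewood--Paley and Hardy--Littlewood maximal inequalities on $L^p(\mathbb{T}^d)$. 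An alternative would be to extend $m$ to a symbol on $\mathbb{T}^d\times\mathbb{R}^{dr}$ whose $\xi$-derivatives inherit (\ref{differenceCondition}) through a discrete-to-continuous interpolation, apply the Euclidean multilinear Coifman--Meyer theorem, and transfer back via a multilinear de~Leeuw argument; but the direct route keeps the proof self-contained.

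\emph{Dyadic decomposition and Fourier expansion.} Fix a dyadic partition of unity $1=\Psi_0(\Xi)+\sum_{j\geq1}\Psi(2^{-j}\Xi)$ on $\mathbb{R}^{dr}$ with $\Psi$ supported in $\{1/2\leq|\Xi|\leq2\}$, and write $m=\sum_{j\geq0}m_j$, $m_j=m\,\Psi_j$ restricted to $\mathbb{Z}^{dr}$, so $m_j$ is supported where $|\Xi|\sim2^j$; it then suffices to bound $\sum_j T_{m_j}$ with constants summable in $j$. On a cube $Q_j$ of side $\sim2^j$ containing $\operatorname{supp}\Psi_j$ expand $m_j(x,\cdot)$ in a Fourier series,
\[
m_j(x,\Xi)=\sum_{\ell\in\mathbb{Z}^{dr}}c_{j,\ell}(x)\,e^{2\pi i\langle\ell,\Xi\rangle/(A2^j)}\,\widetilde\Psi_j(\Xi),
\]
where $A$ is a fixed constant, $\widetilde\Psi_j$ equals $1$ on $\operatorname{supp}\Psi_j$, and $c_{j,\ell}(x)$ is the corresponding coefficient. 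Summation by parts in $\Xi$ against the discrete exponential, using (\ref{differenceCondition}) for $m_j$ --- whose difference bounds, after rescaling by $2^j$, are uniform in $j$ --- yields
\[
\sup_{x\in\mathbb{T}^d}|c_{j,\ell}(x)|\leq C_N\,(1+|\ell|)^{-N},\qquad N=\left[\tfrac{3dr}{2}\right]+1,
\]
uniformly in $j$. This is the only step that uses the precise number of controlled finite differences.

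\emph{Reduction to elementary operators and summation.} Since the exponential factors, $e^{2\pi i\langle\ell,\Xi\rangle/(A2^j)}=\prod_{q=1}^r e^{2\pi i\langle\ell_q,\xi_q\rangle/(A2^j)}$, one obtains
\[
T_{m_j}(f)(x)=\sum_{\ell\in\mathbb{Z}^{dr}}c_{j,\ell}(x)\prod_{q=1}^r\big(S_j^{\ell_q}f_q\big)(x),
\]
where $S_j^{\ell_q}$ is a Littlewood--Paley projection at frequency scale $2^j$ whose kernel is a (periodised) translate by $O(|\ell_q|)$ of a fixed Schwartz bump. For fixed $\ell$ one splits the $j$-sum over the frequency region according to which of $|\xi_1|,\dots,|\xi_r|$ is largest; in the regime where $q_0$ dominates, $\sum_j$ of the $q_0$-factor is controlled by a Littlewood--Paley square function and the other $r-1$ factors pointwise by the Hardy--Littlewood maximal function. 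H\"older's inequality with $\tfrac1p=\sum_q\tfrac1{p_q}$, the vector-valued Fefferman--Stein inequality, and the Littlewood--Paley characterisation of $L^p(\mathbb{T}^d)$ then give
\[
\Big\|\sum_j c_{j,\ell}\prod_q S_j^{\ell_q}f_q\Big\|_{L^p(\mathbb{T}^d)}\leq C\,(1+|\ell|)^{M}\prod_{q=1}^r\|f_q\|_{L^{p_q}(\mathbb{T}^d)}
\]
for a fixed $M=M(d,r)$ (the polynomial factor tracking the translated bumps). The exponent $N=\left[\tfrac{3dr}{2}\right]+1$ is designed precisely so that $\sum_{\ell\in\mathbb{Z}^{dr}}(1+|\ell|)^{M-N}<\infty$ --- heuristically, $dr$ of the finite differences pay for absolute convergence of the $\ell$-series and the remaining $dr/2$ for the square-function estimates --- so summing first in $\ell$, then in $j$, gives $\|T_m(f)\|_{L^p}\leq C\prod_q\|f_q\|_{L^{p_q}}$ for $f\in\mathcal{D}(\mathbb{T}^d)^r$; a density argument extends the bound to all of $L^{p_1}\times\cdots\times L^{p_r}$. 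When some $p_i=\infty$ the relevant square function is replaced by $\sup_j|S_j^{\ell_q}f_q|$, dominated by the maximal function, and nothing changes.

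\emph{Main obstacle.} The crux is making the $j$-summation effectively lacunary: a na\"ive term-by-term bound of $T_{m_j}$ loses a factor growing in $j$, so one must exploit the paraproduct structure --- decomposing the frequency configuration by the dominant variable and turning its contribution into a square function --- exactly as in the bilinear Coifman--Meyer argument, now with $r$ factors and the extra bookkeeping of the $x$-dependent coefficient $c_{j,\ell}(x)$, which is harmless because it enters only as an $L^\infty_x$ multiplier carrying the sharp bound of the expansion step. The companion technicality is the accounting that confirms $N=[3dr/2]+1$ controlled finite differences is exactly enough to close the combined $\ell$- and $j$-summations.
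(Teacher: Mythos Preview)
The paper does not prove Theorem~\ref{thCoifMay} at all: it is stated in Appendix~I purely as a citation of Theorem~3.3 in \cite{Card2019} and is then used as a black box in the proof of Lemma~\ref{Lem7}. There is therefore no ``paper's own proof'' to compare your attempt against.

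That said, your outline is the standard Coifman--Meyer scheme adapted to the periodic/discrete setting and is essentially the strategy used in \cite{Card2019}. A few points deserve sharpening if you intend this as more than a sketch. First, the operators $S_j^{\ell_q}$ you introduce are not genuine Littlewood--Paley projections in the $q$-th variable at scale $2^j$: the support condition is on the \emph{joint} frequency $|\Xi|\sim 2^j$, so $|\xi_q|$ can range from $0$ to $2^j$. The paraproduct decomposition by dominant variable, which you do mention later, is precisely what repairs this --- but the way you introduce $S_j^{\ell_q}$ before that splitting obscures the logic. Second, the accounting for the exponent $N=[3dr/2]+1$ is only heuristic in your write-up; to close the argument you must exhibit the specific $M=M(d,r)$ arising from the translated-bump estimates and verify $N-M>dr$ explicitly. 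Third, the Fourier expansion step on a discrete cube and the corresponding summation-by-parts estimate need the finite-difference bounds for the \emph{product} $m\Psi_j$, not just $m$; this is routine (the $\Psi_j$ satisfy the same type of bounds after rescaling) but should be stated. None of these is a fatal gap --- they are the expected loose ends of a proof sketch --- but the current text would not pass as a self-contained proof.
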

\begin{lem} [see Corollary 4.5.7 in \cite{Ruzh2010}] \label{differentialCondToDifference}
Let $0 < \delta \le 1$, $ 0 \le \rho \le 1$. Let $a:\mathbb{T}^d \times \mathbb{R}^d \rightarrow \mathbb{C}$ satisfying
\eqnsl{
\left| 
\partial_\xi^\alpha \partial_x^\beta a(x,\xi) 
\right|
\le \frac{C_{a \alpha \beta m}}{\left( 1 + |\xi|^2 \right)^\frac{m - \rho |\alpha| + \delta |\beta|}{2}}
~~ \forall x \in \mathbb{T}^d, \xi \in \mathbb{R}^d 
}{differentialCondition}
for $|\alpha| \le N_1$ and $|\beta| \le N_2$. Then the restriction $\overline{a} = a |_{\mathbb{T}^d \times \mathbb{Z}^d} $ satisfies the estimate 
\eqns{
\left| 
\Delta_\xi^\alpha \partial_x^\beta \overline{a}(x,\xi) 
\right|
\le \frac{C_{a \alpha \beta m}^\prime C_{a \alpha \beta m}}{\left( 1 + |\xi|^2 \right)^\frac{m - \rho |\alpha| + \delta |\beta|}{2}}
~~ \forall x \in \mathbb{T}^d, \xi \in \mathbb{Z}^d 
}
for $|\alpha| \le N_1$ and $|\beta| \le N_2$.
\end{lem}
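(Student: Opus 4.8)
The plan is to convert the discrete difference operator $\Delta_\xi^\alpha$ into an iterated classical derivative $\partial_\xi^\alpha$ by the fundamental theorem of calculus, feed the hypothesis (\ref{differentialCondition}) into the resulting integral, and then remove the frequency shift that this introduces by Peetre's inequality.

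First I would record the one-step identity: for a $C^1$ function $g$ on $\mathbb{R}^d$,
\[
\Delta_{\xi_j} g(\xi) = g(\xi+\delta_j) - g(\xi) = \int_0^1 (\partial_{\xi_j} g)(\xi + t\delta_j)\,dt .
\]
Since, by hypothesis, $a(x,\cdot)$ possesses $\xi$-derivatives up to order $N_1$ with bounds uniform in $x$, I iterate this identity: applying it $\alpha_1$ times in the first variable, $\alpha_2$ times in the second, and so on, gives
\[
\Delta_\xi^\alpha \partial_x^\beta \overline{a}(x,\xi)
= \int_{[0,1]^{|\alpha|}} \big(\partial_\xi^\alpha \partial_x^\beta a\big)\big(x,\xi+\eta(t)\big)\,dt ,
\]
where $t$ ranges over the $|\alpha|$-dimensional unit cube and $\eta(t)$ is a shift vector whose $j$-th coordinate lies in $[0,\alpha_j]$; in particular $|\eta(t)| \le |\alpha| \le N_1$, uniformly in $t$. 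The interchange of $\partial_x^\beta$ with the $t$-integrals and the $\xi$-derivatives is legitimate because all the integrands are continuous on the compact cube.

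Next I would insert the symbol estimate (\ref{differentialCondition}) pointwise under the integral, obtaining
\[
\big|\Delta_\xi^\alpha \partial_x^\beta \overline{a}(x,\xi)\big|
\le C_{a\alpha\beta m} \int_{[0,1]^{|\alpha|}} \big(1 + |\xi+\eta(t)|^2\big)^{-\frac{m-\rho|\alpha|+\delta|\beta|}{2}}\,dt ,
\]
and then strip the shift using Peetre's inequality $\big(1+|\xi+\eta|^2\big)^{s} \le 2^{|s|}\big(1+|\xi|^2\big)^{s}\big(1+|\eta|^2\big)^{|s|}$, valid for every real $s$, applied with $s = -\tfrac12(m-\rho|\alpha|+\delta|\beta|)$ and $|\eta| \le N_1$. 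Because the cube has measure $1$, this yields
\[
\big|\Delta_\xi^\alpha \partial_x^\beta \overline{a}(x,\xi)\big|
\le C'_{a\alpha\beta m}\,C_{a\alpha\beta m}\,\big(1+|\xi|^2\big)^{-\frac{m-\rho|\alpha|+\delta|\beta|}{2}}
\]
for all $x\in\mathbb{T}^d$, $\xi\in\mathbb{Z}^d$, $|\alpha|\le N_1$, $|\beta|\le N_2$, with $C'_{a\alpha\beta m}$ depending only on $N_1$, $m$, $\rho$, $\delta$ — which is the assertion.

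The genuinely routine part is the iterated passage from finite differences to integrals of derivatives; the only point that needs care is the bookkeeping ensuring that the cumulative shift $\eta(t)$ stays bounded by $|\alpha|$, so that Peetre's inequality contributes a constant that is \emph{uniform in $\xi$} and depends only on the admissible range of multi-indices and on $m,\rho,\delta$. Everything else reduces to the elementary inequality $1 + |\xi+\eta|^2 \le 2(1+|\xi|^2)(1+|\eta|^2)$ and its reciprocal.
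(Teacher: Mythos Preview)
The paper does not supply its own proof of this lemma; it is simply quoted as Corollary~4.5.7 of \cite{Ruzh2010}. Your argument --- iterated mean value theorem to express $\Delta_\xi^\alpha$ as an integral of $\partial_\xi^\alpha$ over a bounded shift, followed by Peetre's inequality to absorb the shift into the constant --- is correct and is precisely the standard proof one finds in that reference.
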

\noindent
In the proof of Lemma \ref{Lem7} we will need some results from the theory of interpolation. First, let us introduce the needed definitions.
\begin{de}
We define complex strip in the following way:
\eqns{
S = \{ z \in \mathbb{C}: 0 < \im z < 1 \}.
}
\end{de}
\begin{de}[see \cite{Graf2014}] \label{admissibleGrowth}
A continuous function $F: \overline{S} \rightarrow \mathbb{C}$, which is analytic in $S$ is said to be of admissible growth if there is $0\le \alpha < \pi$ such that 
\eqns{
\sup_{z \in \overline{S}} \frac{\log |F(z)|}{e^{\alpha |\im z|}} < \infty
} 
\end{de}
\begin{de}[see \cite{Graf2014}] \label{analyticFamily}
Let $(\Omega, \Sigma, \mu)$ be a measure space and let $\mathcal{X}_1, ..., \mathcal{X}_m$ be linear spaces. Let us assume that for every $z \in \overline{S}$ there is linear operator $T_z: \mathcal{X}_1 \times \dots \mathcal{X}_m \rightarrow \overline{L}^0(\mu)$, where $\overline{L}^0(\mu)$ denotes the space of all equivalence classes of complex-valued measurable functions on $\Omega$ with the topology of convergence in measure on $\mu$-finite sets. The family $\{ T_z \}_{z \in \overline{S}} $ is said to be analytic if for any $(x_1, ..., x_m) \in \mathcal{X}_1 \times \dots \mathcal{X}_m$ and for almost every $\omega \in \Omega$ the function 
\eqnsl{
\overline{S} \ni z \longmapsto T_z(x_1, ..., x_m)(\omega),
}{analyticDefEq1}
is analytic in $S$ and continuous in $\overline{S}$. Additionally, if for $j=0,1$ the function 
\eqns{
\mathbb{R} \times \Omega \ni (t, \omega) \longmapsto T_{j+it}(x_1, ..., x_m)(\omega)
}
is $(\mathcal{L} \times \Sigma)$-measurable for every  $(x_1, ..., x_m) \in \mathcal{X}_1 \times \dots \mathcal{X}_m$, and for almost every $\omega \in \Omega$ the function (\ref{analyticDefEq1}) is of admissible growth, then the family $\{ T_z \}_{z \in \overline{S}} $ is said to be an admissible analytic family. 
\end{de}
The theorem we are about to cite is more general than stated below. The statement has been adapted to better fit the case at hand.
\begin{theorem}[see Theorem 4.1 in \cite{Graf2014}] \label{GrafacosInterpolationTh}
For $1 \le k \le m$, fix $1 < q_0,q_1,q_{0k},q_{1k} < \infty$ and for $0 < \theta < 1$ define $q$, $q_k$ by setting 
\eqns{
\frac{1}{q_k} = \frac{1 - \theta}{q_{0k}} + \frac{\theta}{q_{1k}}, \hd \hd
\frac{1}{q} = \frac{1 - \theta}{q_{0}} + \frac{\theta}{q_{1}}.
}
Assume that $\mathcal{X}_k$ is a dense linear subspace of $L^{q_{0k}}(\td) \cap L^{q_{1k}}(\td)$ and that $\{ T_z \}_{z \in \overline{S}}$ is an admissible analytic family of multilinear operators $T_z: \mathcal{X}_1 \times \dots \times \mathcal{X}_m  \rightarrow L^{q_{0}}(\td) \cap L^{q_{1}}(\td)$. Suppose that for every $(h_1, ..., h_m) \in \mathcal{X}_1 \times \dots \mathcal{X}_m $, $t\in \mathbb{R}$ and $j=0,1$, we have
\eqnsl{
\norm{T_{j+it}(h_1, ..., h_m)}_{L^{q_j}(\td)} 
\le K_j(t) \norm{h_1}_{L^{q_{j1}}(\td)} \dots \norm{h_m}_{L^{q_{jm}}(\td)},
}{thGrafGrowthConOnLine}
where $K_j$ are Lebesgue measurable functions such that $K_j \in L^1(P_j(\theta, \cdot)dt)$ for all $\theta \in (0,1)$, where
\eqns{
P_j(x+iy, t) = \frac{e^{-\pi (t-y)} \sin \pi x}{ \sin^2 \pi x + (\cos \pi x - (-1)^j e^{-\pi(t-y)})^2}, ~~~~
x + iy \in \overline{S}.
}
Then for all $(f_1, ..., f_m) \in \mathcal{X}_1 \times \dots \mathcal{X}_m $, $0 < \theta < 1$, and $s \in \mathbb{R}$ we have
\eqns{
\norm{T_{\theta+is}(f_1, ..., f_m)}_{L^{q}(\td)} 
\le \n{\frac{q_0}{q_0 - 1}}^{1 - \theta} \n{\frac{q_1}{q_1 - 1}}^\theta
 K_\theta(s) \prod_{j=1}^m \norm{f_j}_{L^{q_{j}}(\td)},
}
where 
\eqns{
\log K_\theta (s) = 
\int_\mathbb{R} P_0(\theta, t) \log K_0(t+s) dt
+
\int_\mathbb{R} P_1(\theta, t) \log K_1(t+s) dt.
}
\end{theorem}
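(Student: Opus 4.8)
The plan is to run Stein's method of analytic families, converting the multilinear, vector-valued bound into an application of the Phragm\'en--Lindel\"of principle on the strip $S$ to a single scalar analytic function obtained by duality. Fix $0<\theta<1$, $s\in\mathbb R$ and a tuple $(f_1,\dots,f_m)\in\mathcal X_1\times\cdots\times\mathcal X_m$; by a routine density argument we may assume each $f_k$ and each test function is simple (simple functions are dense in $L^{q_{0k}}\cap L^{q_{1k}}$ since $\td$ has finite measure), and by duality it suffices to bound $\big|\int_\td T_{\theta+is}(f_1,\dots,f_m)\,g\,dx\big|$ over simple $g$ with $\norm{g}_{L^{q'}(\td)}\le 1$. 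I would introduce exponents $\zeta_k(z),\eta(z)$ that are affine in $z$, with $\re\zeta_k(j+it)=q_k/q_{jk}$ and $\re\eta(j+it)=q'/q_j'$ for $j=0,1$, and with the (free) imaginary parts chosen so that $\zeta_k(\theta+is)=\eta(\theta+is)=1$. Setting $f_{k,z}=|f_k|^{\zeta_k(z)}f_k/|f_k|$ and $g_z=|g|^{\eta(z)}g/|g|$, these are finite combinations of characteristic functions with entire coefficients bounded on $\overline S$; they satisfy $f_{k,\theta+is}=f_k$, $g_{\theta+is}=g$, and on the line $\re z=j$ one has $\norm{f_{k,j+it}}_{L^{q_{jk}}}=\norm{f_k}_{L^{q_k}}^{q_k/q_{jk}}$ and $\norm{g_{j+it}}_{L^{q_j'}}=\norm{g}_{L^{q'}}^{q'/q_j'}\le1$.

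Next I would analyse the scalar function
\eqns{
F(z)=\int_\td T_z(f_{1,z},\dots,f_{m,z})(x)\,g_z(x)\,dx,\qquad z\in\overline S.
}
Using Definition \ref{analyticFamily} (pointwise-in-$x$ analyticity in $S$ and continuity on $\overline S$, joint measurability on the two lines, and pointwise admissible growth of $z\mapsto T_z(\cdot)(x)$), the multilinearity of $T_z$ applied to the finitely many characteristic-function atoms of $(f_{1,z},\dots,f_{m,z},g_z)$, and a Morera/Fubini argument, $F$ is analytic in $S$, continuous on $\overline S$, and of admissible growth in the sense of Definition \ref{admissibleGrowth}. H\"older's inequality together with hypothesis (\ref{thGrafGrowthConOnLine}) then controls $F$ on the boundary:
\eqns{
|F(j+it)|\le\norm{T_{j+it}(f_{1,j+it},\dots,f_{m,j+it})}_{L^{q_j}}\norm{g_{j+it}}_{L^{q_j'}}\le K_j(t)\prod_{k=1}^m\norm{f_k}_{L^{q_k}}^{q_k/q_{jk}}.
}

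Since $F$ is analytic in $S$, continuous on $\overline S$ and of admissible growth, $\log|F|$ is subharmonic and is dominated by the Poisson integral of its boundary values for the strip,
\eqns{
\log|F(\theta+is)|\le\int_{\mathbb R}P_0(\theta,t)\,\log|F(i(t+s))|\,dt+\int_{\mathbb R}P_1(\theta,t)\,\log|F(1+i(t+s))|\,dt,
}
where I used the translation identity $P_j(\theta+is,t)=P_j(\theta,t-s)$. Inserting the boundary bounds (using $\log^+K_j\le K_j$ and the hypothesis $K_j\in L^1(P_j(\theta,\cdot)\,dt)$ to justify integrability), together with $\int_{\mathbb R}P_0(\theta,t)\,dt=1-\theta$ and $\int_{\mathbb R}P_1(\theta,t)\,dt=\theta$ (the Poisson representations of $1-\re z$ and $\re z$) and the identity $(1-\theta)\tfrac{q_k}{q_{0k}}+\theta\tfrac{q_k}{q_{1k}}=q_k\big(\tfrac{1-\theta}{q_{0k}}+\tfrac{\theta}{q_{1k}}\big)=1$, the product over $k$ collapses to $\prod_k\norm{f_k}_{L^{q_k}}$ and the two $K_j$-terms assemble exactly into $\log K_\theta(s)$. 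Taking the supremum over $g$ recovers $\norm{T_{\theta+is}(f_1,\dots,f_m)}_{L^q(\td)}$; the explicit prefactor $\big(\tfrac{q_0}{q_0-1}\big)^{1-\theta}\big(\tfrac{q_1}{q_1-1}\big)^\theta$ comes from carrying a quantitative form of the endpoint duality/interpolation estimate inherited from the more general statement being adapted, and the passage from simple functions back to $\mathcal X_k$ is by density and continuity of the estimate.

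I expect the main obstacle to be the complex-analytic core: proving that $\log|F|$ is majorized by the Poisson integral of its boundary data under only the ``admissible growth'' hypothesis. This is a borderline Phragm\'en--Lindel\"of statement at the exponent $\pi$ --- conformally mapping $S$ onto the unit disk turns admissible growth into a condition under which $\log|F|$ is still representable by the (disk, hence strip) Poisson integral of its boundary values --- and it must be combined with the equally delicate verification that $F$ itself has admissible growth: Definition \ref{analyticFamily} only provides admissible growth of $z\mapsto T_z(\cdot)(x)$ pointwise in $x$, so transferring it to $F(z)=\int_\td T_z(\cdots)\,g_z\,dx$ forces one to exploit the simple-function reduction, writing $T_z(f_{1,z},\dots,f_{m,z})$ as a finite linear combination of the $T_z$ of fixed characteristic-function tuples with coefficients bounded on $\overline S$. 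The duality reduction, the boundary H\"older estimates and the exponent bookkeeping are otherwise routine.
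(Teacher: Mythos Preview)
The paper does not supply its own proof of this theorem: it is stated with the attribution ``see Theorem 4.1 in \cite{Graf2014}'' and then simply invoked in Step~3 of the proof of Lemma~\ref{Lem7}. There is therefore nothing in the paper to compare your argument against.

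That said, your sketch is the standard Stein--Hirschman interpolation scheme for analytic families, and it is essentially the route taken in the cited Grafakos--Masty{\l}o paper: reduce by duality to a scalar function $F(z)$ built from simple-function deformations $f_{k,z}$, $g_z$ with affine exponents, verify analyticity/admissible growth via multilinearity on finitely many characteristic-function atoms, bound $|F|$ on the two boundary lines by H\"older plus the hypothesis, and apply the Poisson-integral (three-lines) majorization for subharmonic functions on the strip. Your identification of the delicate step --- that the pointwise admissible-growth hypothesis in Definition~\ref{analyticFamily} must be upgraded to admissible growth of $F$ itself, and that the Phragm\'en--Lindel\"of/Poisson representation must be justified at the borderline exponent --- is accurate; this is exactly where \cite{Graf2014} does the real work. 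The constant $\big(\tfrac{q_0}{q_0-1}\big)^{1-\theta}\big(\tfrac{q_1}{q_1-1}\big)^{\theta}$ arises there not from ``endpoint duality'' per se but from the sharp constant in the $L^{q_j}$-valued three-lines lemma used in their argument; otherwise your outline is faithful to the source.
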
 
\begin{rem} \label{pjDecayRate}
For fixed $x \in (0,1)$ and $y \in \mathbb{R}$ there exists constant $C_{x,y}>0$ such that
\eqns{
|P_j(x+iy, t)| \le C_{x,y} e^{-\pi |t|} \quad \forall t \in \mathbb{R}.
}
\end{rem}
\subsection{Proof of Lemma \ref{Lem7}}
The presented proof follows the original proof in the work of Kato and Ponce \cite{Kato1988}. Some of the more calculation-focused lemmas were moved to Appendix II to provide a clearer argument. Additionally, in the presented proof term $4\pi^2$ will be omitted in the definition of $J^s$ to shorten a bit obtained formulas. 
\begin{proof}[Proof of Theorem \ref{Lem7}]
For smooth functions, any considered infinite series in this proof will be convergent as the following holds
\eqnsl{
\forall \phi \in C^\infty(\mathbb{T}^d) \quad 
\forall m \in \mathbb{N} \quad \exists C_{\phi,m} \text{ such that } \forall \xi \in \mathbb{Z}^d
\quad
|\hat{\phi}(\xi)| \le \frac{C_{\phi,m}}{(1+|\xi|^2)^{m/2}}.
}{fourierTransformDecay} 
Let us start the proof by rewriting the expression under the norm using Definition \ref{defJs}: 
\eqns{
J^s(f g)(x) - f J^s(g)(x) 
& = 
 \sum_{\xi \in \mathbb{Z}^d} e^{i 2\pi \langle x , \xi \rangle } \n{1 + |\xi|^2}^{s/2} \widehat{f g}(\xi) \\
& -
f(x)  \sum_{\eta \in \mathbb{Z}^d} e^{i 2\pi \langle x , \eta \rangle } \n{1 + |\eta|^2}^{s/2} \hat{g}(\eta).
}
Now, we use the fact that the Fourier transform of a product is a convolution of transforms
\eqns{
J^s(f g)(x) - f J^s(g)(x) 
& = 
\sum_{\xi \in \mathbb{Z}^d} e^{i 2\pi \langle x , \xi \rangle } \n{1 + |\xi|^2}^{s/2} \sum_{\eta \in \mathbb{Z}^d} \hat{f}(\eta) \hat{g}(\xi - \eta) \\
& -
\sum_{\xi \in \mathbb{Z}^d} e^{i 2\pi \langle x , \xi \rangle } \hat{f}(\xi)
\sum_{\eta \in \mathbb{Z}^d} e^{i 2\pi \langle x , \eta \rangle } \n{1 + |\eta|^2}^{s/2} \hat{g}(\eta).
}
We change the variables in the first integral on the right-hand side $\overline{\xi} = \xi - \eta$: 
\eqns{
J^s(f g)(x) - f J^s(g)(x) 
& = 
\sum_{\eta \in \mathbb{Z}^d}\sum_{\overline{\xi} \in \mathbb{Z}^d} e^{i 2\pi \langle x , \overline{\xi} + \eta \rangle } \n{1 + |\overline{\xi} + \eta|^2}^{s/2} \hat{f}(\eta) \hat{g}(\overline{\xi}) \\
& -
\sum_{\eta \in \mathbb{Z}^d} \sum_{\xi \in \mathbb{Z}^d} e^{i 2\pi \langle x , \xi + \eta \rangle }  \n{1 + |\eta|^2}^{s/2} \hat{f}(\xi) \hat{g}(\eta).
}
We can write this in the following way
\eqnsl{
J^s(f g)(x)& - f J^s(g)(x) \\
&  = 
\sum_{\eta \in \mathbb{Z}^d}\sum_{\xi \in \mathbb{Z}^d} e^{i 2\pi \langle x , \xi + \eta \rangle } \n{ 
\n{1 + |\xi + \eta|^2}^{s/2} - \n{1 + |\eta|^2}^{s/2}
} \hat{f}(\xi) \hat{g}(\eta).
}{X3}
Now, we aim to rewrite obtained expression as a sum of three terms. To do this we introduce the following partition of unity: let  
$\{\Phi_j\}_{j=1}^3 \subset C^\infty(\mathbb{R})$ be such that
\eqns{
0 \le \Phi_j \le 1~~~\text{for } j=1,2,3,
}
\eqns{
\Phi_1 +\Phi_2 +\Phi_3 = 1 \text{   on } [0,\infty) ,
}
\eqnsl{
\supp \Phi_1 \subset \n{- \frac{1}{9}, \frac{1}{9}},~~~~ 
\supp \Phi_2 \subset \n{\frac{1}{10}, 10},~~~~
\supp \Phi_3 \subset \n{9, \infty}.
}{commutatorProofEq0}
The value $-\frac{1}{9}$ in the definition of the $\Phi_1$ actually can be replaced by any negative value. Now, we can write
\eqnsl{
J^s(f g)(x) - f J^s(g)(x) 
= 
\sum_{j=1}^3 \sigma_j (D) (f,g)(x), 
}{X3__}
where
\eqns{
\sigma_j & (D) (f,g)(x) =  \sum_{\eta \in \mathbb{Z}^d}\sum_{\xi \in \mathbb{Z}^d} e^{i 2\pi \langle x , \xi + \eta \rangle } 
\sigma_j (\xi, \eta)
 \hat{f}(\xi) \hat{g}(\eta)
}
and
\eqns{
\sigma_j (\xi, \eta)  =   \n{ 
\n{1 + |\xi + \eta|^2}^{s/2} - \n{1 + |\eta|^2}^{s/2}
}  \Phi_j \left( \frac{1 + |\xi|^2}{1 + |\eta|^2} \right).
} 
Now, we aim to provide the estimate for each term $\sigma_j (D) (f,g)$. For the reader's convenience, each estimate will be obtained in a separate subsection.
\subsubsection{Step 1: Estimate of $\sigma_1 (D) (f,g)$}
We start with transforming function $\sigma_1(\xi,\eta)$ in the following way
\eqns{
\sigma_1(\xi, \eta) 
& = 
\n{1+|\eta|^2}^{s/2} \n{ \n{\frac{1 + |\xi + \eta|^2}{1 + |\eta|^2}}^{s/2} - 1}
\Phi_1\left( \frac{1 + |\xi|^2}{1 + |\eta|^2} \right) \\
& = 
\n{1+|\eta|^2}^{s/2} \n{ \left[1 + (1+|\eta|^2)^{-1} \langle \xi, \xi + 2\eta \rangle \right]^{s/2} - 1}
\Phi_1\left( \frac{1 + |\xi|^2}{1 + |\eta|^2} \right).
}
Our goal is to show that $\sigma_{1}$ after some transformations satisfies condition (\ref{differenceCondition}). However, checking condition (\ref{differenceCondition}) can be troublesome, and instead, we will verify condition (\ref{differentialCondition}) and use Lemma \ref{differentialCondToDifference}. From the organisation point of view, it is easier to first check condition (\ref{differentialCondition}) for $(\xi, \eta)$ such that $ \frac{1 + |\xi|^2}{1 + |\eta|^2} < \frac{1}{9}$ (compare with (\ref{commutatorProofEq0})). 
Now, we perform Taylor expansion of term $\left[ 1 + (1+|\eta|^2)^{-1} \langle \xi, \xi + 2\eta \rangle \right]^{s/2}$. To do this we recall that $(1+x)^\alpha = \sum_{i=0}^\infty \binom{\alpha}{i} x^i$ for $|x| < 1$, where $\binom{\alpha}{i} = \prod_{r=1}^i \frac{\alpha - r + 1}{r}$, $\binom{\alpha}{0} = 1$. Indeed, based on Lemma \ref{ineqalityle1} from Appendix II and the fact that $\frac{1 + |\xi|^2}{1 + |\eta|^2} < \frac{1}{9}$ we have $\left|(1+|\eta|^2)^{-1} \langle \xi, \xi + 2\eta \rangle \right| < 1$. 
Thus we may write 
\eqns{
\sigma_1(\xi, \eta) 
& = 
\n{1+|\eta|^2}^{s/2} \left[\sum_{r=0}^\infty \binom{s/2}{r} (1+|\eta|^2)^{-r} \langle \xi, \xi + 2\eta \rangle^r -1\right]
\Phi_1\left( \frac{1 + |\xi|^2}{1 + |\eta|^2} \right) \\
& = 
\sum_{r=1}^\infty  \binom{s/2}{r} (1+|\eta|^2)^{s/2-r} \langle \xi, \xi + 2\eta \rangle^r \Phi_1\left( \frac{1 + |\xi|^2}{1 + |\eta|^2} \right).
}
Now, we aim to reformulate the terms under the sum. Let us recall that 
\eqnsl{
 (J^{s-1}g)^{\widehat{}} (\eta) = (1+|\eta|^2)^\frac{s-1}{2} \hat{g} (\eta), 
~~~~
 (\partial f)^{\widehat{}} (\xi) = \xi \hat{f}(\xi).  
}{JsProps}
Thus we may write for $(\xi, \eta)$ such that $ \frac{1 + |\xi|^2}{1 + |\eta|^2} < \frac{1}{9}$:
\eqnsl{
\hat{f}(\xi) \hat{g}(\eta) \sigma_1(\xi, \eta) 
& = 
\sum_{r=1}^\infty \langle \sigma_{1,r}, (\partial f)^{\widehat{}}(\xi) \rangle (J^{s-1}g)^{\widehat{}}(\eta) \\
& \equiv 
 \langle \overline{\sigma}_{1}(\xi, \eta), (\partial f)^{\widehat{}}(\xi) \rangle (J^{s-1}g)^{\widehat{}}(\eta),
}{commutatorStep1Eq1.01}
where 
\eqnsl{
\sigma_{1,r} (\xi, \eta) = \binom{s/2}{r} (1+|\eta|^2)^{-r+1/2} \langle \xi, \xi + 2\eta \rangle^{r-1} (\xi + 2 \eta)  \Phi_1 \left(\frac{1 + |\xi|^2}{1+|\eta|^2}\right).
}{commutatorStep1Eq1.1}
For $(\xi, \eta)$ such that $ \frac{1 + |\xi|^2}{1 + |\eta|^2} \geq \frac{1}{9}$ things are simpler:
\eqns{
\sigma_1(\xi, \eta) \hat{f}(\xi) \hat{g}(\eta)  
& = 
0 \cdot \hat{f}(\xi) \hat{g}(\eta)  
 = 
\langle 0, (\partial f)^{\widehat{}}(\xi) \rangle (J^{s-1}g)^{\widehat{}}(\eta) \\
& \equiv 
\langle \overline{\sigma}_1(\xi,\eta), (\partial f)^{\widehat{}}(\xi) \rangle (J^{s-1}g)^{\widehat{}}(\eta).
}
With this, we can conclude that 
\eqnsl{
\overline{\sigma}_1(\xi, \eta) = \left\{
\begin{array}{cll}
\sum_{r=1}^\infty \sigma_{1,r}(\xi,\eta) & \m{ for } (\xi, \eta): \frac{1 + |\xi|^2}{1 + |\eta|^2} < \frac{1}{9} \\ 
0 &  \m{ otherwise } \\
\end{array}.  \right. 
}{commutatorStep1Eq1.2}
As mentioned before, we will show that for each of $r$ function $\sigma_{1,r}$ fulfils condition (\ref{differentialCondition}) up to some number of differentiations $k(d) \in \mathbb{N}$. We will analyse $\sigma_{1,r}$ piece by piece. Let $m \geq 0$. 
Then, based on Lemma \ref{technicalLemma1} from Appendix II let us observe that for $\alpha_i \in \mathbb{N}$ such that $\sum_{i=1}^d \alpha_i = m$ we have 
\eqns{
\left| \frac{\partial^{m} \left[ (1+|\eta|^2)^{-r + 1/2} \right]}{\partial^{\alpha_1}_{\eta_1}\dots\partial^{\alpha_d}_{\eta_d} } \right|
& \le 
C (1+r)^{m} (1+|\eta|^2)^{-r+1/2 - \frac{m}{2}} 
~~~~ \forall (\xi,\eta) \in \mathbb{R}^{2d}.
}
Using the assumption $\frac{1 + |\xi|^2}{1+|\eta|^2} < \frac{1}{9}$ we can write
\eqnsl{
\left| \frac{\partial^{m} \left[ (1+|\eta|^2)^{-r + 1/2} \right]}{\partial^{\alpha_1}_{\eta_1}\dots\partial^{\alpha_d}_{\eta_d} } \right|
& \le 
\frac{C(1+r)^{m}}{(1+ |\xi|^2 + |\eta|^2)^{\frac{m}{2}}(1 + |\eta|^2)^{r-1/2 }} 
~~~ \forall (\xi,\eta): \frac{1 + |\xi|^2}{1+|\eta|^2} < \frac{1}{9}.
}{commutatorStep1Eq2}
Now let us focus on the term $\langle \xi , \xi + 2\eta \rangle^{r-1}(\xi_k + 2\eta_k)$.
Let $\alpha_i, \beta_i \in \mathbb{N}$ such that $\sum_{i=1}^d (\alpha_1 + \beta_i) = m$. Based on Lemma \ref{technicalLemma2} from Appendix II we have 
\eqnsl{
& \left|
\frac{\partial^{m} \left[ \langle \xi , \xi + 2\eta \rangle^{r-1} (\xi_k + 2\eta_k) \right]}{\partial^{\alpha_1}_{\xi_1}\dots\partial^{\alpha_d}_{\xi_d} \partial^{\beta_1}_{\eta_1}\dots\partial^{\beta_d}_{\eta_d} } 
\right| \\
& \quad \quad \quad \quad \le 
C \left( 1 + r \right)^{m} \left(\frac{7}{9} \right)^{r}   \frac{\left(1 + |\eta|^2 \right)^{r - \frac{1}{2}}}{\left(1 + |\xi|^2 + |\eta|^2  \right)^{\frac{m}{2}}}
~~ \forall (\xi,\eta): \frac{1 + |\xi|^2}{1+|\eta|^2} < \frac{1}{9}. 
}{commutatorStep1Eq3}
Now we will handle the last term in the definition of $\sigma_{1,r}$. From Lemma \ref{technicalLemma3} from Appendix II we have that 
\eqnsl{
\left|
\frac{\partial^{m} \left[ \Phi_1 \n{\frac{1 + |\xi|^2}{1+|\eta|^2}}\right]}{\partial^{\alpha_1}_{\xi_1}\dots\partial^{\alpha_d}_{\xi_d} \partial^{\beta_1}_{\eta_1}\dots\partial^{\beta_d}_{\eta_d} } 
\right|
\le
\frac{C}{\left(1 + |\xi|^2 + |\eta|^2  \right)^{\frac{m}{2}}}
~~~~ \forall (\xi,\eta)\in \mathbb{R}^{2d}.
}{commutatorStep1Eq4}
Thus based on (\ref{commutatorStep1Eq1.1}), (\ref{commutatorStep1Eq1.2}), (\ref{commutatorStep1Eq2}), (\ref{commutatorStep1Eq3}), (\ref{commutatorStep1Eq4}) and Lemma \ref{technicalLemma0} from Appendix II we can finally write 
\eqns{
\left| \frac{\partial^{m} \overline{\sigma}_{1}(\eta,\xi)}{\partial^{\alpha_1}_{\xi_1}\dots\partial^{\alpha_d}_{\xi_d} \partial^{\beta_1}_{\eta_1}\dots\partial^{\beta_d}_{\eta_d} } \right| 
 & \le \frac{C(m)}{\left(1 + |\xi|^2 + |\eta|^2  \right)^{\frac{m}{2}}} \sum_{r=1}^\infty \left|\binom{s/2}{r} \right| (1+r)^m 
\left(\frac{7}{9} \right)^{r}.
}
The sum on the right-hand side is finite based on the D'Alembert criterion for series convergence. Indeed we see that
\eqns{
\frac{\left|\binom{s/2}{r+1} \right| (2+r)^m 
\left(\frac{7}{9} \right)^{r+1}}{\left|\binom{s/2}{r} \right| (1+r)^m 
\left(\frac{7}{9} \right)^{r}}
=
\frac{7}{9} \left| \frac{s/2 - (r + 1) + 1}{r} \right| 
\n{\frac{r+2}{r+1}}^m \overset{r \rightarrow \infty}{\longrightarrow} \frac{7}{9} 
< 1.
}
Based on Lemma \ref{differentialCondToDifference} we have 
\eqns{
\left| 
\Delta_{\xi_1}^{\alpha_1} \dots \Delta_{\xi_d}^{\alpha_d} 
\Delta_{\eta_1}^{\beta_1} \dots \Delta_{\eta_d}^{\beta_d} 
\overline{\sigma}_1(\xi, \eta) 
\right|
\le \frac{C(m,s)}{\left( 1 + |\xi|^2 + |\eta|^2 \right)^\frac{m}{2}}.
}
Thus based on (\ref{commutatorStep1Eq1.01}), (\ref{commutatorStep1Eq1.2}) and Theorem \ref{thCoifMay} we have
\eqnsl{
\norm{\sigma_1(D)(f,g)}_p = \norm{\overline{\sigma}_1(D)(\partial f,J^{s-1}g)}_p \le C \norm{\partial f}_{p_1} \norm{J^{s-1} g}_{p_2}.
}{commutatorStep1EqFinal}
\subsubsection{Step 2: Estimate of $\sigma_3 (D) (f,g)$}
Now we will consider the therm $\sigma_3(D)(f,g)$. Firstly, we define 
\eqnsl{
\sigma_{3,1}(\xi,\eta) = \left( \left(1 + |\xi+\eta|^2 \right)^{s/2} - 1\right) \Phi_3 \left(\frac{1 + |\xi|^2}{1+|\eta|^2}\right)
}{commutatorStep2Eqle2}
and 
\eqnsl{
\sigma_{3,2}(\xi,\eta) = \left( \left(1 + |\eta|^2 \right)^{s/2} - 1\right) \Phi_3 \left(\frac{1 + |\xi|^2}{1+|\eta|^2}\right).
}{commutatorStep2Eqle1}
We clearly see that $\sigma_{3} = \sigma_{3,1} - \sigma_{3,2}$. Based on (\ref{JsProps}) we have 
\eqnsl{
\sigma_{3,1}(\xi,\eta) \hat{f}(\xi) \hat{g}(\eta) =  \frac{ \left(1 + |\xi+\eta|^2 \right)^{s/2} - 1}{\left(1 + |\xi|^2 \right)^{s/2}}
\Phi_3 \left(\frac{1 + |\xi|^2}{1+|\eta|^2}\right)
 (J^{s}f)^{\widehat{}} (\xi) \hat{g}(\eta).
}{commutatorStep2Eqle0.5}
Now we have to show that 
\eqnsl{
\sigma_{3,1}^{*}(\xi,\eta) =  \left(1 + |\xi|^2 \right)^{-s/2} \left( \left(1 + |\xi+\eta|^2 \right)^{s/2} - 1\right)  \Phi_3 \left(\frac{1 + |\xi|^2}{1+|\eta|^2}\right)
}{commutatorStep2Eqle0.51}
fulfils condition (\ref{differenceCondition}). Like previously, we will show that condition (\ref{differentialCondition}) holds and deduce (\ref{differenceCondition}) from Lemma \ref{differentialCondToDifference}. Also as before we will split our considerations into two cases: for $(\xi, \eta)$ such that $\frac{1 + |\xi|^2}{1+|\eta|^2} > 9$ and the opposite. We start with the prior case. Based on Lemma \ref{technicalLemma1} from Appendix II for $\alpha_i \in \mathbb{N}$ such that $\sum_{i=1}^d \alpha_i = m$ we can deduce the following 
\eqns{
\left| \frac{\partial^{m} \left[ (1+|\xi|^2)^{-s/2} \right]}{\partial^{\alpha_1}_{\xi_1}\dots\partial^{\alpha_d}_{\xi_d} } \right|
 \le 
C(s,m) (1+|\xi|^2)^{-s/2} \n{1 + |\xi|^2 }^{-\frac{m}{2}}
~~~~ \forall (\xi,\eta) \in \mathbb{R}^{2d}.
}
Using the assumption $\frac{1+|\xi|^2}{1+|\eta|^2} > 9$ we have
\eqnsl{
\left| \frac{\partial^{m} \left[ (1+|\xi|^2)^{-s/2} \right]}{\partial^{\alpha_1}_{\xi_1}\dots\partial^{\alpha_d}_{\xi_d} } \right|
 \le 
C (1+|\xi|^2)^{-s/2} \n{1 + |\xi|^2 + |\eta|^2 }^{-\frac{m}{2}}
~~~~ \forall (\xi,\eta) : \frac{1+|\xi|^2}{1+|\eta|^2} > 9.
}{commutatorStep2Eq0}
Based on Lemma \ref{technicalLemma4} from Appendix II for  $\alpha_i,\beta_i \in \mathbb{N}$ such that $\sum_{i=1}^d (\alpha_i + \beta_i) = m$ we have 
\eqnsl{
\left| \frac{\partial^{m} \left[ (1+|\xi + \eta|^2)^{s/2} - 1 \right]}{\partial^{\alpha_1}_{\xi_1}\dots\partial^{\alpha_d}_{\xi_d} \partial^{\beta_1}_{\eta_1}\dots\partial^{\beta_d}_{\eta_d} }  \right|
& \le 
C  (1 + |\xi|^2 + |\eta|^2 )^{\frac{s}{2} - \frac{m}{2}}
~~~~ \forall (\xi,\eta) : \frac{1+|\xi|^2}{1+|\eta|^2} > 9.
}{commutatorStep2Eq1}
Based on Lemma \ref{technicalLemma3} from Appendix II we get 
\eqnsl{
\left|
\frac{\partial^{m} \left[ \Phi_3 \n{\frac{1 + |\xi|^2}{1 + |\eta|^2}}\right]}{\partial^{\alpha_1}_{\xi_1}\dots\partial^{\alpha_d}_{\xi_d} \partial^{\beta_1}_{\eta_1}\dots\partial^{\beta_d}_{\eta_d} } 
\right|
\le 
 \frac{C}{\left(1 + |\xi|^2 + |\eta|^2\right)^{\frac{m}{2}}}.
}{commutatorStep2Eq2}
Collecting (\ref{commutatorStep2Eqle0.51}), (\ref{commutatorStep2Eq0}), (\ref{commutatorStep2Eq1}) and (\ref{commutatorStep2Eq2}) and by using Lemma \ref{technicalLemma0} from Appendix II we get 
\eqns{
& \left| \frac{\partial^{m} \sigma_{3,1}^*}{\partial^{\alpha_1}_{\xi_1}\dots\partial^{\alpha_d}_{\xi_d} \partial^{\beta_1}_{\eta_1}\dots\partial^{\beta_d}_{\eta_d}} \right| \\
& \qquad \quad \quad\le 
C \left( \frac{1+|\xi|^2+|\eta|^2}{1+|\xi|^2} \right)^{s/2} \n{1 + |\eta|^2 + |\xi|^2 }^{-\frac{m}{2}}
~\forall (\xi,\eta) : \frac{1+|\xi|^2}{1+|\eta|^2} > 9.
}
We see that thanks to $ \frac{1+|\xi|^2}{1+|\eta|^2} > 9$ we have
\eqns{
\frac{1+|\xi|^2 + |\eta|^2}{1+|\xi|^2}
\le 
1 + \frac{|\eta|^2}{1+|\xi|^2}
\le 
1 + \frac{1}{9}
= \frac{10}{9}.
}
and thus
\eqns{
\left| \frac{\partial^{m} \sigma_{3,1}^* }{\partial^{\alpha_1}_{\xi_1}\dots\partial^{\alpha_d}_{\xi_d} \partial^{\beta_1}_{\eta_1}\dots\partial^{\beta_d}_{\eta_d}} \right|
 \le 
C  \n{1 + |\eta|^2 + |\xi|^2 }^{-\frac{m}{2}}
~~~~ \forall (\xi,\eta) : \frac{1+|\xi|^2}{1+|\eta|^2} > 9.
}
Obtained formula is also valid in case $\frac{1+|\xi|^2}{1+|\eta|^2} \le 9$ due to $\supp \Phi_3 \subset (9,\infty)$. Thus based on Lemma \ref{differentialCondToDifference} we can deduce
\eqns{
\left| 
\Delta_{\xi_1}^{\alpha_1} \dots \Delta_{\xi_d}^{\alpha_d} 
\Delta_{\eta_1}^{\beta_1} \dots \Delta_{\eta_d}^{\beta_d} 
\sigma_{3,1}^*(\xi, \eta)
\right|
\le 
\frac{C_\alpha}{(1+|\xi|^2+|\eta|^2)^{\frac{|\alpha| + |\beta|}{2}}}
} 
for all $\alpha = (\alpha_1, \dots,  \alpha_d) \in \mathbb{N}^d$, $\beta = (\beta_1, \dots,  \beta_d) \in \mathbb{N}^d$. Finally from (\ref{commutatorStep2Eqle0.5}) and Theorem \ref{thCoifMay} we get 
\eqnsl{
\norm{\sigma_{3,1}(D)(f,g)}_p = \norm{\sigma_{3,1}^*(D)(J^s f,g)}_p \le C \norm{g}_{p_3} \norm{J^{s} f}_{p_4}  .
}{commutatorStep2EqPart1Final}
Now let us proceed with $\sigma_{3,2}$. Before we start let us observe that $\sigma_{3,2}(\xi, 0) \equiv 0$. Let us define auxiliary smooth function $\Psi$ such that $ 0 \le \Psi(x) \le 1$, $\Psi(x) = 1$ for $x < 3/4$, $\Psi(x) = 0$ for $x > 9/10$. Then we can rewrite (\ref{commutatorStep2Eqle1}) in a following form 
\eqnsl{
& \sigma_{3,2} (\xi, \eta) \hat{f}(\xi) \hat{g}(\eta) \\
& \quad = \left\{
\begin{array}{cll}
|\xi|^{-2}\left\langle \xi, \n{\partial f}^{\widehat{}}(\xi) \right\rangle
\left\langle \eta, \n{G J^{s-1} g}^{\widehat{}}(\eta) \right\rangle
\Phi_3 \n{\frac{1 + |\xi|^2}{1 + |\eta|^2}} & \m{ for } (\xi, \eta): \frac{1 + |\xi|^2}{1 + |\eta|^2} > 9 \\ 
\left\langle 0, \n{\partial f}^{\widehat{}}(\xi) \right\rangle
\left\langle 0, \n{G J^{s-1} g}^{\widehat{}}(\eta) \right\rangle &  \m{ otherwise } \\
\end{array},  \right. 
}{commutatorStep2Eq-2}
where
\eqnsl{
\n{G h}^{\widehat{}}(\eta)
=
g(\eta) \hat{h} (\eta),
}{Gdef_a}
\eqnsl{
g(\eta)
=
\left\{
\begin{array}{cll}
\eta |\eta|^{-2} 
\frac{\left(1+|\eta|^2\right)^{s/2} - 1 }{\left(1+|\eta|^2\right)^{s/2 - 1/2}} \Psi \n{\frac{1}{1+|\eta|^2}}
& \m{ for } \eta: \frac{1}{1 + |\eta|^2} < \frac{9}{10} \\ 
0  &  \m{ otherwise } \\
\end{array}
\right.
.
}{Gdef_b}
The purpose of term $\Psi \n{\frac{1}{1+|\eta|^2}}$ is to cut-off region near $\eta = 0$, without affecting values for $\eta \in \mathbb{Z}^d \setminus \{0\}$. 
We see that based on Lemma \ref{technicalLemma3} from Appendix II we have 
\eqnsl{
\left|
 \frac{\partial^{m} \left[  \Phi_3 \n{ \frac{1 + |\xi|^2}{1 + |\eta|^2}} \right]}{\partial^{\alpha_1}_{\xi_1}\dots\partial^{\alpha_d}_{\xi_d} \partial^{\beta_1}_{\eta_1}\dots\partial^{\beta_d}_{\eta_d} }
\right| 
\le 
\frac{C}{\n{1 + |\eta|^2 + |\xi|^2}^{\frac{m}{2}}}.
}{commutatorStep2Eq3}
With the use of that fact that $\frac{1 + |\xi|^2}{1 + |\eta|^2} > 9$ (which implies that $|\xi|>2 \sqrt{2}$), we also see that
\eqnsl{
\left|
 \frac{\partial^{m} \left[ |\xi|^{-2} \xi_j \right]}{\partial^{\alpha_1}_{\xi_1}\dots\partial^{\alpha_d}_{\xi_d} \partial^{\beta_1}_{\eta_1}\dots\partial^{\beta_d}_{\eta_d} }
\right|
\le C |\xi|^{-1-m} \le \frac{C}{\n{1 + |\eta|^2 + |\xi|^2}^{\frac{m+1}{2}}}.
}{commutatorStep2Eq4}
We see that
\eqns{
&
 \frac{\partial^{m} \left[ |\xi|^{-2} \eta_k \xi_j \Phi_3 \right]}{\partial^{\alpha_1}_{\xi_1}\dots\partial^{\alpha_d}_{\xi_d} \partial^{\beta_1}_{\eta_1}\dots\partial^{\beta_d}_{\eta_d} }
\\
& ~~~~~~ = 
\eta_k
\frac{\partial^{m} \left[ |\xi|^{-2} \xi_j \Psi_3 \right]}{\partial^{\alpha_1}_{\xi_1}\dots\partial^{\alpha_d}_{\xi_d} \partial^{\beta_{1}}_{\eta_1}\dots\partial^{\beta_{d}}_{\eta_d}}
+
\binom{\beta_k}{1}
\frac{\partial^{m - 1} \left[ |\xi|^{-2} \xi_j \Phi_3 \right]}{\partial^{\alpha_1}_{\xi_1}\dots\partial^{\alpha_d}_{\xi_d} \partial^{\beta_{1}}_{\eta_1} \dots \partial^{\beta_{k-1}}_{\eta_{k-1}} \partial^{\beta_{k} - 1}_{\eta_{k}} \partial^{\beta_{k+1}}_{\eta_{k+1}} \dots \partial^{\beta_{d}}_{\eta_d}}.
}
Thus we see that based on (\ref{commutatorStep2Eq3}), (\ref{commutatorStep2Eq4}) and Lemma \ref{technicalLemma0} from Appendix II we can calculate
\eqns{
\left|
 \frac{\partial^{m} \left[ |\xi|^{-2} \eta_k \xi_j \Phi_3 \right]}{\partial^{\alpha_1}_{\xi_1}\dots\partial^{\alpha_d}_{\xi_d} \partial^{\beta_1}_{\eta_1}\dots\partial^{\beta_d}_{\eta_d} }
\right|
\le 
\frac{C}{\n{1 + |\eta|^2 + |\xi|^2}^{\frac{m}{2}}}.
}
We see that based on the above, (\ref{commutatorStep2Eq-2}), Lemma \ref{differentialCondToDifference} and Theorem \ref{thCoifMay} we may conclude
\eqnsl{
\norm{\sigma_{3,2}(D)(f,g)}_p \le C \norm{\partial f}_{p_1} \norm{G J^{s-1} g }_{p_2}.
}{commutatorStep2Eq4.5}
Now we need to derive the estimate for $\norm{G J^{s-1} g }_{p_2}$. 
We see that based on Lemma \ref{technicalLemma1} from Appendix II we have 
\eqnsl{
\left|
\frac{\partial^{m} \left[ (1+|\eta|^2)^{1/2 - s/2} \right]}{\partial^{\alpha_1}_{\eta_1}\dots\partial^{\alpha_d}_{\eta_d} } 
\right|
\le 
C (1+|\eta|^2)^{1/2-s/2 - m/2}  ~~~~~\forall \eta \in \mathbb{R}^d.
}{commutatorStep2Eq5}
Similarly also based on Lemma \ref{technicalLemma1} from Appendix II we have
\eqnsl{
\left|
\frac{\partial^{m} \left[ (1+|\eta|^2)^{s/2} \right]}{\partial^{\alpha_1}_{\eta_1}\dots\partial^{\alpha_d}_{\eta_d} } 
\right|
\le 
C (1+|\eta|^2)^{s/2-m/2}   ~~~~~\forall \eta \in \mathbb{R}^d .
}{commutatorStep2Eq6}
Also, we have
\eqnsl{
\left|
\frac{\partial^{m} \left[ \eta |\eta|^{-2} \right]}{\partial^{\alpha_1}_{\eta_1}\dots\partial^{\alpha_d}_{\eta_d} } 
\right|
\le 
C |\eta|^{-m-1}  
\le
C\n{1 + |\eta|^2}^{-\frac{m+1}{2}}  ~~~~~\forall \eta \in \mathbb{R}^d : \frac{1}{1 + |\eta|^2} < \frac{9}{10}.
}{commutatorStep2Eq7}
Applying the same reasoning employed in Lemma \ref{technicalLemma3} from Appendix II we get 
\eqnsl{
\left|
\frac{\partial^{m} \left[ \Psi \n{ \frac{1}{1 + |\eta|^2}} \right]}{\partial^{\alpha_1}_{\eta_1}\dots\partial^{\alpha_d}_{\eta_d} } 
\right|
\le 
C\n{1 + |\eta|^2}^{-\frac{m}{2}}    ~~~~~\forall \eta \in \mathbb{R}^d .
}{commutatorStep2Eq8}
We see that based on (\ref{commutatorStep2Eq5}), (\ref{commutatorStep2Eq6}), (\ref{commutatorStep2Eq7}), (\ref{commutatorStep2Eq8}) and Lemma \ref{technicalLemma0} from Appendix II we get
\eqns{
& \left|
\frac{
\partial^{m} \left[ \eta |\eta|^{-2} \left(1+|\eta|^2\right)^{1/2-s/2}
\left( \left(1+|\eta|^2\right)^{s/2} - 1 \right) \Psi \n{ \frac{1}{1 + |\eta|^2}} \right]}{\partial^{\alpha_1}_{\eta_1}\dots\partial^{\alpha_d}_{\eta_d} } 
\right| 
\le C \n{ 1 + |\eta|^2}^{-\frac{m}{2}}.
}
Thus based on the above, (\ref{Gdef_a}), (\ref{Gdef_b}), Lemma \ref{differentialCondToDifference} and Theorem \ref{thCoifMay}  we have 
\eqns{
\norm{G h}_{p_2}
= 
\norm{\n{\n{G h}^{\widehat{}}}^{\widecheck{}}}_{p_2}
=
\norm{\sum_{\eta \in \mathbb{Z}^{d}} e^{i 2 \pi \langle  \cdot , \eta \rangle} g(\eta) 
\hat{h}(\eta)}_{p_2}
\le 
C \norm{h}_{p_2}.
}
Thus we can conclude based on (\ref{commutatorStep2Eq4.5}) we have
\eqnsl{
\norm{\sigma_{3,2}(D)(f,g)}_p 
\le C \norm{\partial f}_{p_1} \norm{J^{s-1}g}_{p_2}.
}{commutatorStep2EqPart2Final}
Thus using (\ref{commutatorStep2EqPart1Final}) and (\ref{commutatorStep2EqPart2Final}) we obtain
\eqnsl{
\norm{\sigma_{3}(D)(f,g)}_p 
\le C \n{ 
\norm{\partial f}_{p_1} \norm{J^{s-1}g}_{p_2} 
+ 
\norm{g}_{p_3} \norm{J^{s}f}_{p_4} 
}.
}{commutatorStep2EqFinal}
\subsubsection{Step 3: Estimate of $\sigma_2(D)(f,g)$}
Now we have to estimate 
\eqns{
\sigma_2(\xi,\eta) = 
\n{ 
\n{1 + |\xi + \eta|^2}^{s/2} - \n{1 + |\eta|^2}^{s/2}
} \Phi_2 \n{ \frac{1 + |\xi|^2}{1 + |\eta|^2} }.
}
To do this we introduce two new functions
\begin{align}
\sigma_{2,1}(\xi,\eta) & =  
\n{1 + |\xi + \eta|^2}^{s/2} \Phi_2 \n{ \frac{1 + |\xi|^2}{1 + |\eta|^2} },
\label{commutatorStep3Eq1} \\
\sigma_{2,2}(\xi,\eta) & =  
\n{1 + |\eta|^2}^{s/2} \Phi_2 \n{ \frac{1 + |\xi|^2}{1 + |\eta|^2} }.
\label{commutatorStep3Eq2} 
\end{align}
It is clear that $\sigma_2 = \sigma_{2,1} - \sigma_{2,2}$. 
We see that the following holds:
\eqns{
\sigma_{2,2}(\xi,\eta) \hat{f}(\xi) \hat{g}(\eta)
& =
\n{1 + |\eta|^2}^{s/2}\n{1 + |\xi|^2}^{-s/2}
\Phi_2\n{ \frac{1 + |\xi|^2}{1 + |\eta|^2} }
(J^{s}f)^{\widehat{}}(\xi)
\hat{g}(\eta) \\
& =
\overline{\sigma}_{2,2}(\xi,\eta)
(J^{s}f)^{\widehat{}}(\xi)
\hat{g}(\eta).
}
Recalling that $\supp \Phi_2 \subset \left( \frac{1}{10} , 10 \right)$ and using Lemmas \ref{technicalLemma1}, \ref{technicalLemma3}, \ref{technicalLemma0} from Appendix II combined with Lemma \ref{differentialCondToDifference} and Theorem \ref{thCoifMay}  it is easy to see that
\eqnsl{
\norm{\sigma_{2,2}(D)(f,g)}_p 
=
\norm{\overline{\sigma}_{2,2}(D)(J^s f,g)}_p 
\le 
C \norm{g}_{p_3} \norm{J^{s}f}_{p_4} .
}{commutatorStep3Eq2.1}
Now we have to provide the estimate for $\sigma_{2,1}$:
\eqnsl{
\sigma_{2,1}(\xi,\eta) \hat{f}(\xi) \hat{g}(\eta)
& =
\n{1 + |\xi + \eta|^2}^{s/2}\n{1 + |\xi|^2}^{-s/2}
\Phi_2\n{ \frac{1 + |\xi|^2}{1 + |\eta|^2} }
(J^{s}f)^{\widehat{}}(\xi)
\hat{g}(\eta) \\
& =
\overline{\sigma}_{2,1}(\xi,\eta)
(J^{s}f)^{\widehat{}}(\xi)
\hat{g}(\eta).
}{commutatorStep3Eq3}
As we see in the formulation of Theorem \ref{thCoifMay} condition (\ref{differenceCondition}) has to be valid up to some number of differences taken. Let us denote this number by $k(d)$. Now, let us analyse the case where $s/2 \geq k(d)$. We try to proceed in the case of $\overline{\sigma}_{2,1}$ in the same way as in the case of $\overline{\sigma}_{2,2}$. Thus we try to validate the assumption (\ref{differentialCondition}) in Lemma \ref{differentialCondToDifference}. While doing so we may have to estimate negative powers of term $1 + |\xi + \eta|^2$, which is problematic. This is not the issue when $s/2 \geq k(d)$ and calculations can be performed similarly to $\overline{\sigma}_{2,2}$ (thanks to Lemma \ref{technicalLemma4} from Appendix II). To obtain the estimate in the case where  $s$ is not so large we will apply the complex interpolation method. Thus we extend the definition of $\overline{\sigma}_{2,1}$, $\sigma_{2,1}$ to complex values:
\begin{align}
\overline{\sigma}_{2,1}^z(\xi,\eta) & = \n{1 + |\xi + \eta|^2}^{z/2}\n{1 + |\xi|^2}^{-z/2}
\Phi_2\n{ \frac{1 + |\xi|^2}{1 + |\eta|^2} }, \label{sigmaExt1}\\
\sigma_{2,1}^z(\xi,\eta) & = \n{1 + |\xi + \eta|^2}^{z/2}
\Phi_2\n{ \frac{1 + |\xi|^2}{1 + |\eta|^2} }, \label{sigmaExt2}
\end{align}
such that $0 \le \mathit{Re} z \le 2 k(d) $. If we chose $z = 2k + it$ we can conclude using (\ref{sigmaExt1}) and Lemmas \ref{technicalLemma1}, \ref{technicalLemma3}, \ref{technicalLemma4}, \ref{technicalLemma0}, \ref{differentialCondToDifference} and Theorem \ref{thCoifMay} that for $\psi, \phi \in C^\infty(\td)$ we have 
\eqnsl{
\norm{\overline{\sigma}_{2,1}^{2k+it}(D)(\phi,\psi)}_p 
\le C(t)  \norm{\psi}_{p_3}\norm{\phi}_{p_4},
}{commutatorStep3Eq3.5}
where $C(t) = \overline{C} \cdot (1+|t|)^k$ (this factor is the result of $k(d)$ differentiations present in Theorem \ref{thCoifMay}). Now we need to establish a similar estimate in the case of $z = it$. To this end, we observe that (based on transformations that lead to (\ref{X3__})) we have
\eqns{
J^{it}(\phi \psi) = \sum_{i=1}^3 \kappa_i^{it}(D)(\phi, \psi),
}
where 
\eqns{
\kappa_j^{it}(\xi,\eta) = 
\n{1 + |\xi + \eta|^2}^{it/2}
 \Phi_j\n{ \frac{1 + |\xi|^2}{1 + |\eta|^2} }.
}
We want to obtain an estimate of $\overline{\sigma}_{2,1}^{it}(D)(\phi, \psi)$, however, it is easier to start with obtaining an estimate for $\sigma_{2,1}^{it}(D)(\phi, \psi)$:
\eqnsl{
\sigma_{2,1}^{it}(D)(\phi, \psi) = \kappa_2^{it}(D)(\phi, \psi) = J^{it}(\phi\psi) - \kappa_1^{it}(D)(\phi, \psi) - \kappa_3^{it}(D)(\phi, \psi).
}{commutatorStep3Eq3.51}
Now we need to derive estimates for each term on the right-hand side. First we will concentrate on $\kappa_j^{it}(\xi,\eta)$ for $j=1,3$. Based on Lemma \ref{technicalLemma4} from Appendix II we have 
\eqns{
& \left|
\frac{\partial^{m} \left[ (1+|\xi + \eta|^2)^{it/2} \right]}{\partial^{\alpha_1}_{\eta_1}\dots\partial^{\alpha_d}_{\eta_d} \partial^{\beta_1}_{\eta_1}\dots\partial^{\beta_d}_{\eta_d} } 
\right| \\
& \qquad \quad \le  
C \cdot (1 + |t|)^m
\n{1 + |\eta|^2 + |\xi|^2 }^{-\frac{m}{2}} \text{ for } \frac{1+|\xi|^2}{1 + |\eta|^2} > 9 \text{ or } \frac{1+|\xi|^2}{1 + |\eta|^2} < \frac{1}{9}.
}
Based on Lemma \ref{technicalLemma3} from Appendix II we have 
\eqns{
\left|
\frac{\partial^{m} \left[ \Phi_j\left(\frac{1+|\xi|^2}{1+ |\eta|^2} \right) \right]}{\partial^{\alpha_1}_{\eta_1}\dots\partial^{\alpha_d}_{\eta_d} \partial^{\beta_1}_{\eta_1}\dots\partial^{\beta_d}_{\eta_d} } 
\right|
\le  
C \cdot (1 + |t|)^m
\n{1 + |\eta|^2 + |\xi|^2 }^{-\frac{m}{2}}.
}
Thus based on Lemma \ref{technicalLemma0} from Appendix II we get 
\eqns{
\left|
\frac{\partial^{m} \left[ \kappa^{it}_j(\xi,\eta) \right]}{\partial^{\alpha_1}_{\eta_1}\dots\partial^{\alpha_d}_{\eta_d} \partial^{\beta_1}_{\eta_1}\dots\partial^{\beta_d}_{\eta_d} } 
\right|
\le  
C \cdot (1 + |t|)^m
\n{1 + |\eta|^2 + |\xi|^2 }^{-\frac{m}{2}}
\text{ for } j = 1,3.
}
Based on Lemma \ref{differentialCondToDifference} and Theorem \ref{thCoifMay} we have
\eqnsl{
\norm{\kappa_{j}^{it}(D)(\phi, \psi)}_p 
\le C(1+|t|)^k \norm{\psi}_{p_3} \norm{\phi}_{p_4}  \text{ for } j=1,3.
}{commutatorStep3Eq3.52}
Now we will provide an estimate for $J^{it}(\phi \psi)$. We see that for $h \in C^\infty(\td)$ we have 
\eqns{
J^{it}(h)(x) = 
 \sum_{\eta \in \mathbb{Z}^d} e^{2 \pi i \langle x , \eta \rangle } \n{1 + |\eta|^2}^{it/2} \hat{h}(\eta).
}
We see that based on Lemma \ref{technicalLemma1} from Appendix II symbol $\n{1 + |\eta|^2}^{it/2}$ fulfils assumptions of Lemma \ref{differentialCondToDifference} and thus the assumption of Theorem \ref{thCoifMay}. Consequently, we have
\eqnsl{
\norm{J^{it}h}_p 
\le C(1+|t|)^k \norm{h}_p .
}{commutatorStep3Eq3.525}
From this we easily get 
\eqnsl{
\norm{J^{it}(\phi \psi)}_p 
\le C(1+|t|)^k \norm{\psi}_{p_3} \norm{\phi}_{p_4}  .
}{commutatorStep3Eq3.53}
Thus we see that based on (\ref{commutatorStep3Eq3.51}), (\ref{commutatorStep3Eq3.52}), (\ref{commutatorStep3Eq3.53}) we have
\eqns{
\norm{\sigma_{2,1}^{it}(D)(\phi, \psi)}_p \le C(1+|t|)^k \norm{\psi}_{p_3} \norm{\phi}_{p_4}.
}
Based on (\ref{sigmaExt1}), (\ref{sigmaExt2}) and (\ref{commutatorStep3Eq3.525}) we have  
\eqns{
& \norm{\overline{\sigma}_{2,1}^{it}(D)(\phi, \psi)}_p 
 =
\norm{\sigma_{2,1}^{it}(D)(J^{-it} \phi, \psi)}_p
\le 
 C(1+|t|)^{2k} \norm{\psi}_{p_3} \norm{\phi}_{p_4}.
}
In order to use Theorem \ref{GrafacosInterpolationTh} we need to show that family of operators $ \left\{ \overline{\sigma}_{2,1}^z(D) \right\}_{z \in \overline{S}}$ is admissible analytic family. According to the Definition \ref{analyticFamily} we can verify the conditions for smooth functions (which are dense in $L^p$, $1<p<\infty$). Let us choose two functions $\psi, \phi \in C^\infty(\td)$. We clearly see that 
\eqns{ 
S \ni z \longmapsto 
\sum\limits_{\substack{\xi \in \mathbb{Z}^d: |k|<n \\ \eta \in \mathbb{Z}^d: |k|<n}}
e^{2 \pi i x (\xi + \eta)}
\n{1 + |\xi + \eta|^2}^{z/2}\n{1 + |\xi|^2}^{-z/2}
\Phi_2\n{ \frac{1 + |\xi|^2}{1 + |\eta|^2} } 
\hat{\phi}(\xi) \hat{\psi}(\eta)
}
is analytic, because functions of type $S \ni z \mapsto \beta^{z/2}$, $\beta\in\mathbb{R}_+$ are analytic. We will show that the expression on the right-hand side converges uniformly. Indeed, we see that using (\ref{fourierTransformDecay}) we have
\eqnsl{
&  
\sum_{\xi, \eta \in \mathbb{Z}^d} \left|
e^{2 \pi i x (\xi + \eta)}
\n{1 + |\xi + \eta|^2}^{z/2}\n{1 + |\xi|^2}^{-z/2}
\Phi_2\n{ \frac{1 + |\xi|^2}{1 + |\eta|^2} } 
\hat{\phi}(\xi) \hat{\psi}(\eta)
\right| 
\\ & \quad
\le
C \sum_{\xi \in \mathbb{Z}^d} \left| \hat{\phi}(\xi) \right|  \sum_{\eta \in \mathbb{Z}^d} 
\left|\hat{\psi}(\eta) \right|
\le 
C \sum_{\xi \in \mathbb{Z}^d} \frac{C_{\phi}}{(1 + |\xi|^2)^{\frac{d+1}{2}}}  
  \sum_{\eta \in \mathbb{Z}^d} \frac{C_{\psi}}{(1 + |\xi|^2)^{\frac{d+1}{2}}} 
< \infty.
}{analyticCheckEq1}
Thus it is easy to see that
\eqnsl{
\sum\limits_{\substack{\xi \in \mathbb{Z}^d: |k|<n \\ \eta \in \mathbb{Z}^d: |k|<n}}
e^{2 \pi i x (\xi + \eta)}
\n{1 + |\xi + \eta|^2}^{z/2}\n{1 + |\xi|^2}^{-z/2}
&
\Phi_2\n{ \frac{1 + |\xi|^2}{1 + |\eta|^2} } 
\hat{\phi}(\xi) \hat{\psi}(\eta)
\\ & 
\overset{n \rightarrow \infty} 
\rightrightarrows 
\overline{\sigma}_{2,1}^z(D)(\phi,\psi).
}{analyticCheckEq1.5}
Thus, we can conclude that $\overline{\sigma}_{2,1}^z(D)(\phi,\psi)$ is analytic for any $\phi,\psi \in C^\infty(\td)$. Using the same approach we can show continuity of $\overline{S} \ni z  \mapsto \overline{\sigma}_{2,1}^z(D)(\phi,\psi)$. 
We will only apply Theorem \ref{GrafacosInterpolationTh} to one of the variable of $\overline{\sigma}_{2,1}^z(D)(\phi,\psi)$. To show that condition (\ref{thGrafGrowthConOnLine}) holds, we verify that $C(1+|t|)^{2k} \norm{\psi}_{p_3} \in L^1(P_j(\theta, \cdot) dt)$ for $j=0,1$ (interpolation with respect to the first variable). It is obvious based on Remark \ref{pjDecayRate}. Thus using Theorem \ref{GrafacosInterpolationTh} we can deduce that for $0 \le s \le 2k$ the following holds
\eqns{
\norm{\overline{\sigma}_{2,1}^s(D)(\phi, \psi)}_p 
\le 
 C \norm{\psi}_{p_3} \norm{\phi}_{p_4} .
}
Now recalling (\ref{commutatorStep3Eq1}), (\ref{commutatorStep3Eq3}) and (\ref{sigmaExt1}) we have 
\eqns{
\norm{\sigma_{2,1}(D)(f,g)}_p
=
\norm{\overline{\sigma}_{2,1}^s(D)(J^{s} f,g)}_p 
\le 
 C \norm{g}_{p_3} \norm{J^s f}_{p_4} .
}
The validity of the above inequality in case $s/2 > k$ was already justified in reasoning that lead to (\ref{commutatorStep3Eq3.5}). Thus using the above and (\ref{commutatorStep3Eq2.1}) we obtain
\eqnsl{
\norm{\sigma_{2}(D)(f,g)}_p
\le 
 C \norm{g}_{p_3} \norm{J^s f}_{p_4} .
}{commutatorStep3EqFinal}

\subsubsection{Conclusion}
By combining (\ref{X3__}), (\ref{commutatorStep1EqFinal}), (\ref{commutatorStep2EqFinal}) and (\ref{commutatorStep3EqFinal}) we get 
\eqns{
\norm{J^s(f g) - f J^s(g)}_p 
\le C \n{ 
\norm{\partial f}_{p_1} \norm{J^{s-1}g}_{p_2} 
+ 
\norm{g}_{p_3} \norm{J^{s}f}_{p_4} 
}.
}
\end{proof}

\section{Appendix II}
The following lemmas were used in the proof of Lemma \ref{Lem7}.
\begin{lem} \label{ineqalityle1}
Let $\xi$, $\eta \in \mathbb{R}^d$ such that $\frac{1+|\xi|^2}{1 + |\eta|^2} < \frac{1}{9}$, then
\eqnsl{
\left|(1+|\eta|^2)^{-1} \langle \xi, \xi + 2\eta \rangle \right|  < \frac{7}{9}.
}{ineqalityle1A}
\end{lem}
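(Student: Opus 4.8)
The plan is to reduce (\ref{ineqalityle1A}) to two elementary one-variable bounds that come directly from the hypothesis. First I would expand $\langle \xi, \xi + 2\eta \rangle = |\xi|^2 + 2\langle \xi, \eta\rangle$ and apply the Cauchy--Schwarz inequality, so that
\[
\left| \langle \xi, \xi + 2\eta \rangle \right| \le |\xi|^2 + 2|\xi|\,|\eta|,
\]
and it is enough to estimate $\dfrac{|\xi|^2 + 2|\xi|\,|\eta|}{1+|\eta|^2}$ from above.

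Next I would rewrite the assumption $\dfrac{1+|\xi|^2}{1+|\eta|^2} < \dfrac{1}{9}$ in the equivalent form $9(1+|\xi|^2) < 1+|\eta|^2$. This yields simultaneously $|\xi|^2 < \dfrac{1+|\eta|^2}{9}$ and $|\xi| < \dfrac{1}{3}\sqrt{1+|\eta|^2}$, while $|\eta| < \sqrt{1+|\eta|^2}$ is trivial. Inserting the first bound into the $|\xi|^2$ term and the latter two into the $2|\xi|\,|\eta|$ term gives
\[
\frac{|\xi|^2 + 2|\xi|\,|\eta|}{1+|\eta|^2} < \frac{1}{9} + 2\cdot\frac{1}{3} = \frac{7}{9},
\]
which is exactly (\ref{ineqalityle1A}).

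There is essentially no obstacle here; the only point to watch is that every step used strictly increases the right-hand side, so the strict inequality of the hypothesis is preserved throughout (in particular $|\eta| < \sqrt{1+|\eta|^2}$ is strict since $1+|\eta|^2>0$, and in fact $|\xi|^2 < \frac{1+|\eta|^2}{9} - 1 < \frac{1+|\eta|^2}{9}$). If one prefers to avoid Cauchy--Schwarz, the same computation goes through starting from $\left|\langle \xi, \xi + 2\eta\rangle\right| \le |\xi|\,|\xi + 2\eta| \le |\xi|\,(|\xi| + 2|\eta|)$.
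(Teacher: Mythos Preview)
Your proof is correct and follows essentially the same approach as the paper: expand the inner product, bound $|\langle\xi,\eta\rangle|\le|\xi|\,|\eta|$, and use the hypothesis to get $\frac{|\xi|^2}{1+|\eta|^2}<\frac{1}{9}$, $\frac{|\xi|}{\sqrt{1+|\eta|^2}}<\frac{1}{3}$, $\frac{|\eta|}{\sqrt{1+|\eta|^2}}<1$, yielding the bound $\frac{1}{9}+\frac{2}{3}=\frac{7}{9}$.
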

\begin{proof}[Proof of Lemma \ref{ineqalityle1}]
We have
\eqns{
\left|(1+|\eta|^2)^{-1} \langle \xi, \xi + 2\eta \rangle \right| 
& \le 
\frac{|\xi|^2 + 2 |\langle \xi, \eta \rangle|}{1+|\eta|^2}
 \le 
\frac{|\xi|^2}{1 + |\eta|^2}
+ 2
\frac{|\xi|}{\sqrt{1 + |\eta|^2}}
\frac{|\eta|}{\sqrt{1 + |\eta|^2}}
}
Using the fact that $\frac{1+|\xi|^2}{1 + |\eta|^2} < \frac{1}{9}$, we have
\eqns{
\left|(1+|\eta|^2)^{-1} \langle \xi, \xi + 2\eta \rangle \right| 
 <
\frac{1}{9} + 2 \cdot \sqrt{\frac{1}{9}} \cdot 1  
= \frac{7}{9}.
}
\end{proof}
\begin{lem} \label{technicalLemma0}
Let $N \in \mathbb{N}$, $d \in \mathbb{N}_+$. Suppose that $\sigma_1, \sigma_2:\mathbb{R}^d \rightarrow \mathbb{C}$ are two symbols satisfying
\eqnsl{
\left| 
\partial_\xi^\alpha  \sigma_i(\xi) 
\right|
\le \frac{C_{\alpha}^i F_i(\xi)}{\left( 1 + |\xi|^2 \right)^{\frac{|\alpha|}{2}}}
~~~~~\forall \xi \in U \subset \mathbb{R}^d
}{technicalLemma0Eq0}
for all $\alpha=(\alpha_1, \dots, \alpha_d) \in \mathbb{N}^d $ such that $|\alpha| \le N$ and $F_i: \mathbb{R}^d \rightarrow \mathbb{R}_{\geq 0}$. Let us define $\sigma = \sigma_1 \sigma_2$. Then for all $\alpha \in \mathbb{N}^d $  such that $|\alpha| \le N$ there exists constant $C_\alpha$ such that 
\eqns{
\left| 
\partial_\xi^\alpha  \sigma(\xi) 
\right|
\le \frac{C_{\alpha}F_1(\xi)F_2(\xi)}{\left( 1 + |\xi|^2 \right)^{\frac{|\alpha|}{2}}}
~~~~~\forall \xi \in U \subset \mathbb{R}^d.
}
\end{lem}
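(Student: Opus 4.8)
The plan is to prove this by the Leibniz rule for multi-index derivatives of a product, together with the elementary fact that the weight $(1+|\xi|^2)^{-1/2}$ behaves multiplicatively under the bookkeeping of derivative orders. First I would fix $\alpha\in\mathbb{N}^d$ with $|\alpha|\le N$ and write
\[
\partial_\xi^\alpha \sigma(\xi) = \partial_\xi^\alpha\big(\sigma_1(\xi)\sigma_2(\xi)\big) = \sum_{\beta\le\alpha} \binom{\alpha}{\beta}\,\partial_\xi^\beta\sigma_1(\xi)\,\partial_\xi^{\alpha-\beta}\sigma_2(\xi),
\]
where $\beta\le\alpha$ means $\beta_i\le\alpha_i$ for every $i$ and $\binom{\alpha}{\beta}=\prod_i\binom{\alpha_i}{\beta_i}$. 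Since $|\beta|\le|\alpha|\le N$ and $|\alpha-\beta|\le|\alpha|\le N$, hypothesis (\ref{technicalLemma0Eq0}) applies to each factor on $U$.

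Next I would insert those bounds termwise: for each $\beta\le\alpha$,
\[
\big|\partial_\xi^\beta\sigma_1(\xi)\,\partial_\xi^{\alpha-\beta}\sigma_2(\xi)\big|
\le \frac{C_\beta^1 F_1(\xi)}{(1+|\xi|^2)^{|\beta|/2}}\cdot\frac{C_{\alpha-\beta}^2 F_2(\xi)}{(1+|\xi|^2)^{|\alpha-\beta|/2}}
= \frac{C_\beta^1 C_{\alpha-\beta}^2\,F_1(\xi)F_2(\xi)}{(1+|\xi|^2)^{(|\beta|+|\alpha-\beta|)/2}}.
\]
The key arithmetic observation is that $|\beta|+|\alpha-\beta| = |\alpha|$ for every $\beta\le\alpha$ (this is just additivity of the $\ell^1$ norm of multi-indices under the componentwise splitting), so each summand carries exactly the weight $(1+|\xi|^2)^{-|\alpha|/2}$. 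Summing over the finitely many $\beta\le\alpha$ and setting $C_\alpha := \sum_{\beta\le\alpha}\binom{\alpha}{\beta} C_\beta^1 C_{\alpha-\beta}^2$ gives the claimed estimate on $U$.

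There is essentially no obstacle here — the only point requiring a line of care is the identity $|\beta|+|\alpha-\beta|=|\alpha|$, which guarantees the decay exponent is preserved rather than merely bounded below; everything else is the finite Leibniz expansion and the triangle inequality. I would therefore present the argument in the three short steps above without further elaboration.
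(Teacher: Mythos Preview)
Your proposal is correct and follows essentially the same approach as the paper: apply the multi-index Leibniz rule to $\sigma_1\sigma_2$, bound each factor using the hypothesis, and observe that the decay exponents add up to $|\alpha|$. The paper writes the Leibniz expansion componentwise (summing over $k_1\le\alpha_1,\dots,k_d\le\alpha_d$) while you use the more compact multi-index form $\sum_{\beta\le\alpha}\binom{\alpha}{\beta}$, but this is purely notational.
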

\begin{proof}[Proof of Lemma \ref{technicalLemma0}]
Let us set $\alpha=(\alpha_1, \dots, \alpha_d) \in \mathbb{N}^d $ such that $|\alpha| \le N$. Now we calculate
\eqns{
\frac{\partial^{|\alpha|} [\sigma_1 \sigma_2]}{\partial_{\xi_1}^{\alpha_1} \dots \partial_{\xi_d}^{\alpha_d}}
= 
\sum_{k_1 = 0}^{\alpha_1} \dots \sum_{k_d = 0}^{\alpha_d}
\binom{\alpha_1}{k_1} \dots \binom{\alpha_d}{k_d}
\frac{\partial^{\sum_{i=1}^d k_i} [\sigma_1]}{\partial_{\xi_1}^{k_1} \dots \partial_{\xi_d}^{k_d}}
\frac{\partial^{|\alpha| - \sum_{i=1}^d k_i} [\sigma_2]}{\partial_{\xi_1}^{\alpha_1 - k_1} \dots \partial_{\xi_d}^{\alpha_d - k_d}}.
}
Thus we have using the assumption (\ref{technicalLemma0Eq0}) we get
\eqns{
\left|
\frac{\partial^{|\alpha|} [\sigma_1 \sigma_2]}{\partial_{\xi_1}^{\alpha_1} \dots \partial_{\xi_d}^{\alpha_d}}
\right|
& \le 
\sum_{k_1 = 0}^{\alpha_1} \dots \sum_{k_d = 0}^{\alpha_d}
\binom{\alpha_1}{k_1} \dots \binom{\alpha_d}{k_d}
\frac{C_{\alpha_1, \dots, \alpha_d}F_1(\xi)}{\left( 1 + |\xi|^2 \right)^{\frac{\sum_{i=1}^d k_i}{2}}}
\frac{C_{\alpha_1 - k_1, \dots, \alpha_d - k_d}F_2(\xi)}{\left( 1 + |\xi|^2 \right)^{|\alpha| -\frac{\sum_{i=1}^d k_i}{2}}} \\
& = 
\frac{F_1(\xi) F_2(\xi)}{\left( 1 + |\xi|^2 \right)^\frac{|\alpha|}{2}}
\sum_{k_1 = 0}^{\alpha_1} \dots \sum_{k_d = 0}^{\alpha_d}
\binom{\alpha_1}{k_1} \dots \binom{\alpha_d}{k_d}
C_{\alpha_1 - k_1, \dots, \alpha_d - k_d}
C_{\alpha_1, \dots, \alpha_d}.
}
\end{proof}
\begin{lem} \label{technicalLemma1}
Let $s \in \mathbb{C}$, $m \in \mathbb{N}$ and $\left\{ \alpha_{i} \right\}_{i = 1}^{d} \in \mathbb{N}^{d}$ such that $\sum_{i=1}^d \alpha_i = m$. Then, there exists $N \in \mathbb{N}$, $\left\{ \omega_{i,j} \right\}_{i,j = 1}^{N,d} \in \mathbb{N}^{dN}$, $\left\{ k_{i} \right\}_{i = 1}^{N} \in \mathbb{N}^{N}$, $\left\{ C_{i} \right\}_{i = 1}^{N} \in \mathbb{C}^{N}$ and  such that
\eqnsl{
\frac{\partial^{m} \left[ (1+|\eta|^2)^{s} \right]}{\partial^{\alpha_1}_{\eta_1}\dots\partial^{\alpha_d}_{\eta_d} } 
= 
\sum_{i = 1}^{N} C_i (1+|\eta|^2)^{s-k_i}
 \eta_1^{\omega_{i,1}} \dots \eta_d^{\omega_{i,d}},
}{technicalLemma1Eq0}
where $\forall i \in \{ 1, \dots, N \}$  $0 \le k_i \le m$,  $2k_i - \sum_{j=1}^{d} \omega_{i,j} = m$ and $|C_i(s,\alpha_1, \dots, \alpha_d)| \le C(m) \cdot (1 + |s|)^{m}$. Also, we have 
\eqns{
\left| \frac{\partial^{m} \left[ (1+|\eta|^2)^{s} \right]}{\partial^{\alpha_1}_{\eta_1}\dots\partial^{\alpha_d}_{\eta_d} } \right|
\le 
C(m) (1+|s|)^{m} (1+|\eta|^2)^{\re s-\frac{m}{2}} .
}
\end{lem}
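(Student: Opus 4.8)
The plan is to establish the closed-form expression (\ref{technicalLemma1Eq0}) by induction on $m$, and then to read off the pointwise bound from the two constraints $0 \le k_i \le m$ and $2k_i - \sum_{j=1}^d \omega_{i,j} = m$ that are propagated along the induction. The base case $m = 0$ is immediate: there is nothing to differentiate, so the representation holds with $N = 1$, $C_1 = 1$, $k_1 = 0$ and $\omega_{1,j} = 0$ for all $j$, and all the stated constraints (including $|C_1| \le C(0)(1+|s|)^0$) are satisfied.

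For the inductive step I would assume (\ref{technicalLemma1Eq0}) holds for every multi-index $(\alpha_1,\dots,\alpha_d)$ with $\sum_i \alpha_i = m$; since every multi-index of order $m+1$ arises from one of order $m$ by raising a single coordinate $\ell$ by one, it suffices to apply $\partial_{\eta_\ell}$ to the representation. Differentiating a single summand $C_i (1+|\eta|^2)^{s-k_i}\, \eta_1^{\omega_{i,1}} \cdots \eta_d^{\omega_{i,d}}$ produces at most two summands of exactly the same shape: one from differentiating the power of $1+|\eta|^2$, whose data is $(2C_i(s-k_i),\ k_i+1,\ \text{the exponent vector with }\omega_{i,\ell}\text{ increased by one})$, and, when $\omega_{i,\ell}\ge 1$, one from differentiating the monomial, whose data is $(\omega_{i,\ell}C_i,\ k_i,\ \text{the exponent vector with }\omega_{i,\ell}\text{ decreased by one})$. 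In each case a direct check shows that the new value of $2k - \sum_j\omega_j$ equals $(2k_i - \sum_j\omega_{i,j}) + 1 = m+1$, that the new $k$ still lies in $\{0,\dots,m+1\}$ (using $0\le k_i\le m$), and that the new constant is bounded in absolute value by $2|C_i|(|s|+m)$, respectively by $\omega_{i,\ell}|C_i| \le m|C_i|$ (since $\omega_{i,\ell}\le \sum_j\omega_{i,j} = 2k_i-m\le m$); because $|C_i|\le C(m)(1+|s|)^m$ and $|s|+m\le (1+m)(1+|s|)$, both new constants are $\le C(m+1)(1+|s|)^{m+1}$ for a suitably chosen $C(m+1)$. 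As the number of summands at most doubles per differentiation, $N = N(m,d) < \infty$, which completes the induction.

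Finally, to obtain the estimate I would argue termwise: from $|\eta_j|\le (1+|\eta|^2)^{1/2}$ we get $|\eta_1^{\omega_{i,1}}\cdots \eta_d^{\omega_{i,d}}| \le (1+|\eta|^2)^{\frac12\sum_j\omega_{i,j}} = (1+|\eta|^2)^{k_i - m/2}$ by the constraint $\sum_j\omega_{i,j}=2k_i-m$, while $|(1+|\eta|^2)^{s-k_i}| = (1+|\eta|^2)^{\re s - k_i}$ because $k_i$ is real; multiplying, each summand is bounded by $|C_i|\,(1+|\eta|^2)^{\re s - m/2}$, and summing the at most $N(m,d)$ of them with $|C_i|\le C(m)(1+|s|)^m$ gives the claimed bound (absorbing $N(m,d)$ into the constant). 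There is no genuinely hard step: the whole argument is a bookkeeping induction, and the only point requiring care is verifying that each differentiation inflates the constants by at most a factor $O(1+|s|)$ (times an $m$-dependent constant), so that after $m$ steps one recovers precisely the factor $(1+|s|)^m$ and nothing worse.
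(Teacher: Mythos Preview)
Your proof is correct and follows essentially the same approach as the paper: both argue by induction on $m$, differentiate the representation termwise to obtain at most two new summands of the same shape, verify the constraints $0\le k_i\le m$ and $2k_i-\sum_j\omega_{i,j}=m$ are preserved (with the constants picking up a factor $O(1+|s|)$), and then read off the pointwise bound from these constraints. Your treatment is in fact slightly tidier than the paper's, since you start the induction at $m=0$ and explicitly handle the case $\omega_{i,\ell}=0$ where no second summand appears.
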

\begin{proof}[Proof of Lemma \ref{technicalLemma1}]
We will prove the representation formula (\ref{technicalLemma1Eq0}) using the induction method. Let us observe that 
\eqns{
\frac{\partial \left[ (1+|\eta|^2)^{s} \right]}{\partial_{\eta_i}} 
= 
2s (1+|\eta|^2)^{s-1} \eta_i  
}
and thus formula (\ref{technicalLemma1Eq0}) holds for one differentiation. Now we assume that it holds for a certain number of differentiations and will try to deduce its validity after additional differentiation. Indeed we have
\eqns{
\frac{\partial^{{m+1}} \left[ (1+|\eta|^2)^{s} \right]}{\partial^{\alpha_1}_{\eta_1}\dots\partial^{\alpha_j+1}_{\eta_j}\dots\partial^{\alpha_d}_{\eta_d} } 
& = \frac{\partial}{\partial_{\eta_{j}}}
\sum_{i = 1}^{N_{m}} C_i (1+|\eta|^2)^{s-k_i}
 \eta_1^{\omega_{i,1}} \dots \eta_d^{\omega_{i,d}}  \\
 & = 
\sum_{i = 1}^{N_{m}} 2 C_i (s-k_i)(1+|\eta|^2)^{s-(k_i+1)}
 \eta_1^{\omega_{i,1}} \dots \eta_j^{\omega_{i,j} + 1} \dots \eta_d^{\omega_{i,d}}  \\
& + \sum_{i = 1}^{N_{m}} C_i (1+|\eta|^2)^{s-k_i}
 \eta_1^{\omega_{i,1}} \dots \eta_j^{\omega_{i,j} - 1} \dots \eta_d^{\omega_{i,d}}  .
} 
We observe that $2(k_i+1) - \sum_{j=1}^{d} \omega_{i,j} - 1 = m+1$ and $|C_i (s-k_i)| \lesssim \widetilde{C}(1+|s|)^{m+1} $. Thus we proved that (\ref{technicalLemma1Eq0}) holds. Now it is easy to verify that 
\eqns{
\left| \frac{\partial^{m} \left[ (1+|\eta|^2)^{s} \right]}{\partial^{\alpha_1}_{\eta_1}\dots\partial^{\alpha_d}_{\eta_d} } \right|
& \le 
C (1+|s|)^{m} (1+|\eta|^2)^{\re s} 
\sum_{i=1}^{N_{m}} 
\frac{|\eta|^{\sum_{j=1}^d\omega{i,j}}}{(1+|\eta|^2)^{k_i}} \\
& \le 
C (1+|s|)^{m} (1+|\eta|^2)^{\re s } 
\sum_{i=1}^{N_{m}} (1+|\eta|^2)^{-k_i + \frac{1}{2}\sum_{j=1}^d \omega{i,j}} \\
& \le 
C (1+|s|)^{m} (1+|\eta|^2)^{\re s-\frac{m}{2}}.
}
\end{proof}

\begin{lem} \label{technicalLemma2}
Let $r \in \mathbb{N_+}$, $m \in \mathbb{N}$ and $\left\{ \alpha_{i} \right\}_{i = 1}^{d}, \left\{ \beta_{i} \right\}_{i = 1}^{d} \in \mathbb{N}^{d}$ such that $\sum_{i=1}^d \left(\alpha_i + \beta_i \right) = m$. Then, there exists $N \in \mathbb{N}$, $\left\{ \omega_{i,j} \right\}_{i,j = 1}^{N,2d} \in \mathbb{N}^{N \times 2d}$, $\left\{ k_{i} \right\}_{i = 1}^{N} \in \mathbb{N}^{N}$, $\left\{ C_{i} \right\}_{i = 1}^{N} \in \mathbb{R}^{N}$ and  such that
\eqnsl{
\frac{\partial^{m} \left[ 
\langle \xi , \xi + 2\eta \rangle^{r-1}  
\left( \xi_k + 2\eta_k \right)
\right]}{\partial^{\alpha_1}_{\xi_1}\dots\partial^{\alpha_d}_{\xi_d} \partial^{\beta_1}_{\eta_1}\dots\partial^{\beta_d}_{\eta_d} } 
= 
\sum_{i = 1}^{N} C_i \langle \xi , \xi + 2\eta \rangle^{r-1-k_i}
 \xi_1^{\omega_{i,1}} \dots \xi_d^{\omega_{i,d}} \eta_1^{\omega_{i,d+1}} \dots \eta_d^{\omega_{i,2d}}.
}{technicalLemma2Eq0}
Moreover, for $i=1,\dots,N$ we have $ 0 \le k_i \le m $, $ r - 1 - k_i \geq 0 $, $2 k_i - \sum_{j=1}^{2d} \omega_{i,j} = m-1$ and $|C_i| \le  C (1+r)^{m}$. Additionally, there exists $C$ independent from $r$ such that for $(\xi,\eta):\frac{1 + |\xi|^2}{1 + |\eta|^2} < \frac{1}{9}$ we have
\eqns{
\left|
\frac{\partial^{m} \left[ \langle \xi , \xi + 2\eta \rangle^{r-1} (\xi_k + 2\eta_k) \right]}{\partial^{\alpha_1}_{\xi_1}\dots\partial^{\alpha_d}_{\xi_d} \partial^{\beta_1}_{\eta_1}\dots\partial^{\beta_d}_{\eta_d} } 
\right|
& \le C(1+r)^m
\left(\frac{7}{9} \right)^{r}  \frac{\left(1 + |\eta|^2 \right)^{r - \frac{1}{2}}}{\left(1 + |\xi|^2 + |\eta|^2  \right)^{\frac{m}{2}}}.
}
\end{lem}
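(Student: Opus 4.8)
\emph{Proof plan.} The plan is to establish the representation formula (\ref{technicalLemma2Eq0}) by induction on the differentiation order $m$, exactly in the spirit of the proof of Lemma \ref{technicalLemma1}, and then to extract the pointwise bound directly from it. For the base case $m=0$ (no differentiation) one writes $\langle\xi,\xi+2\eta\rangle^{r-1}(\xi_k+2\eta_k)$ as the two-term sum with $C_1=1$, $C_2=2$, $k_1=k_2=0$, and $\omega$ selecting $\xi_k$, respectively $\eta_k$; then $2k_i-\sum_j\omega_{i,j}=-1=m-1$, $0\le k_i\le m$, and $r-1-k_i=r-1\ge0$ (using $r\ge1$) are all immediate, and $|C_i|\le C(1+r)^0$ trivially.

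For the inductive step I assume (\ref{technicalLemma2Eq0}) holds after $m$ derivatives and apply one more, say $\partial_{\xi_j}$ (the case $\partial_{\eta_j}$ is identical, using $\partial_{\eta_j}\langle\xi,\xi+2\eta\rangle=2\xi_j$). Writing each summand as $C_i\langle\xi,\xi+2\eta\rangle^{r-1-k_i}\xi^{\omega^\xi}\eta^{\omega^\eta}$ with $\omega^\xi=(\omega_{i,1},\dots,\omega_{i,d})$ and $\omega^\eta=(\omega_{i,d+1},\dots,\omega_{i,2d})$, the Leibniz rule produces two kinds of contributions. First, hitting the power of $\langle\xi,\xi+2\eta\rangle$ gives $2C_i(r-1-k_i)\langle\xi,\xi+2\eta\rangle^{r-2-k_i}(\xi_j+\eta_j)\xi^{\omega^\xi}\eta^{\omega^\eta}$, which raises $k_i$ to $k_i+1$, introduces the scalar $2(r-1-k_i)$, and multiplies by the two-monomial factor $\xi_j+\eta_j$; if $r-1-k_i=0$ the differentiated factor is the constant $1$, so that summand drops out and the surviving terms still satisfy $r-1-(k_i+1)\ge0$. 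Second, hitting the monomial lowers one $\xi$-exponent by one, leaves $k_i$ unchanged, and brings down an integer $\le\sum_j\omega_{i,j}$. A direct check shows that in both cases the quantity $2k-\sum\omega$ is raised from $m-1$ to $m$, i.e. to $(m+1)-1$; moreover from the identity $\sum_j\omega_{i,j}=2k_i-(m-1)\le m+1$ the monomial-coefficients are bounded in terms of $m$ only, so all new constants obey $|C|\le C(m)(1+r)^{m+1}$. Since each summand spawns at most three new summands, the number of terms stays bounded by some $N=N(m)$ independent of $r$; this yields (\ref{technicalLemma2Eq0}) together with all of its stated properties.

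Finally, for the pointwise estimate, fix $(\xi,\eta)$ with $\frac{1+|\xi|^2}{1+|\eta|^2}<\frac19$. Lemma \ref{ineqalityle1} gives $|\langle\xi,\xi+2\eta\rangle|<\frac79(1+|\eta|^2)$, and the smallness hypothesis gives $|\xi|^2<\frac19(1+|\eta|^2)$; writing $a$ and $b$ for the total $\xi$- and $\eta$-degrees of a summand, so that $a+b=2k_i-m+1$, I obtain
\eqns{
\left|C_i\langle\xi,\xi+2\eta\rangle^{r-1-k_i}\xi^{\omega^\xi}\eta^{\omega^\eta}\right|
\le C(m)(1+r)^m\left(\tfrac79\right)^{r-1-k_i}3^{-a}\,(1+|\eta|^2)^{\,r-\frac12-\frac m2},
}
the exponent $r-\tfrac12-\tfrac m2$ appearing precisely because $(r-1-k_i)+\tfrac{a+b}{2}=r-\tfrac12-\tfrac m2$. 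Since $1+|\xi|^2+|\eta|^2<\tfrac{10}{9}(1+|\eta|^2)$ one may replace $(1+|\eta|^2)^{-m/2}$ by $C(m)(1+|\xi|^2+|\eta|^2)^{-m/2}$, and since $0\le k_i\le m$ and $3^{-a}\le1$ one has $\left(\tfrac79\right)^{r-1-k_i}3^{-a}\le\left(\tfrac97\right)^{m+1}\left(\tfrac79\right)^r$; summing over the $N(m)$ summands yields exactly the asserted bound. The main obstacle is not any single estimate but the induction bookkeeping: keeping $r-1-k_i\ge0$ (handled by discarding the constant terms for which it would fail) and, at the same time, controlling both the size of the constants ($\lesssim(1+r)^m$) and the number of terms (independent of $r$), all of which hinge on the conserved quantity $2k_i-\sum_j\omega_{i,j}=m-1$ and the resulting degree bound $\sum_j\omega_{i,j}\le m+1$.
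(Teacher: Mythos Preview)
Your proof is correct and follows essentially the same route as the paper: an induction on the total differentiation order to establish the representation formula with its side conditions, followed by a pointwise estimate obtained by combining Lemma~\ref{ineqalityle1} with the conserved relation $2k_i-\sum_j\omega_{i,j}=m-1$. The only cosmetic differences are that you start the induction at $m=0$ rather than $m=1$, and you are more explicit than the paper about the bookkeeping (in particular the survival of $r-1-k_i\ge0$, the bound $\sum_j\omega_{i,j}\le m+1$ controlling the monomial coefficients, and the $r$-independence of the number of terms); the underlying argument is the same.
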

\begin{proof}[Proof of Lemma \ref{technicalLemma2}]
We will prove this statement via induction argument. We see that for one derivative we have 
\eqns{
\frac{\partial \left[ \langle \xi , \xi + 2\eta \rangle^{r-1} (\xi_k + 2\eta_k) \right]}{\partial_{\xi_j}} 
& = 
(r-1)\langle \xi , \xi + 2\eta \rangle^{r-2} (2\xi_j + 2\eta_j) (\xi_k + 2\eta_k)\\
& +
\langle \xi , \xi + 2\eta \rangle^{r-1} \delta_{kj},
}
\eqns{
\frac{\partial \left[ \langle \xi , \xi + 2\eta \rangle^{r-1} (\xi_k + 2\eta_k) \right]}{\partial_{\eta_j}} 
& = 
(r-1)\langle \xi , \xi + 2\eta \rangle^{r-2} 2 \xi_j (\xi_k + 2\eta_k) \\
& +
2\langle \xi , \xi + 2\eta \rangle^{r-1} \delta_{kj}.
}
We see that the above results of differentiations match the form of (\ref{technicalLemma2Eq0}). Now, we conduct the induction step 
\eqns{
& \frac{\partial^{m+1} \left[ \langle \xi , \xi + 2\eta \rangle^{r-1} (\xi_k + 2\eta_k) \right]}{\partial^{\alpha_1}_{\xi_1}\dots \partial^{\alpha_j + 1}_{\xi_j} \dots\partial^{\alpha_d}_{\xi_d} \partial^{\beta_1}_{\eta_1}\dots\partial^{\beta_d}_{\eta_d} } 
 = 
\frac{\partial}{\partial_{\xi_j} } \left( 
\sum_{i = 1}^{N_{m}} C_i \langle \xi , \xi + 2\eta \rangle^{r-1-k_i}
 \prod_{k=1}^d \xi_k^{\omega_{i,k}} \eta_k^{\omega_{i,d+k}}  
 \right) \\
 & \quad =  
\sum_{i = 1}^{N_{m}} 2 C_i (r-1-k_i)
\langle \xi , \xi + 2\eta \rangle^{r-1-(k_i+1)}
(\xi_j + \eta_j) \xi_1^{\omega_{i,1}} \dots \xi_d^{\omega_{i,d}} \eta_1^{\omega_{i,d+1}} \dots \eta_d^{\omega_{i,2d}}  \\
  & \quad \phantom{=} +  
\sum_{i = 1}^{N_{m}} C_i
\langle \xi , \xi + 2\eta \rangle^{r-1-k_i}
 \xi_1^{\omega_{i,1}} \dots \xi_j^{\omega_{i,j}-1} \dots \xi_d^{\omega_{i,d}} \eta_1^{\omega_{i,d+1}} \dots \eta_d^{\omega_{i,2d}}.
}
We see that $ 2(k_i+1) - (\sum_{j=1}^{2d} \omega_{i,j} + 1) = (m + 1) - 1 $ and $ 2 k_i - (\sum_{j=1}^{2d} \omega_{i,j} - 1) = (m + 1) - 1 $ thus postulated equality (\ref{technicalLemma2Eq0}) holds. In the same way, we get equality for $\frac{\partial}{\partial_{\eta_j} }$. \\
Now, we will prove the inequality stated in the lemma. We see that   
\eqns{
& \left|
\frac{\partial^{m} \left[ \langle \xi , \xi + 2\eta \rangle^{r-1} (\xi_k + 2\eta_k) \right]}{\partial^{\alpha_1}_{\xi_1}\dots\partial^{\alpha_d}_{\xi_d} \partial^{\beta_1}_{\eta_1}\dots\partial^{\beta_d}_{\eta_d} } 
\right| \\ & \qquad \qquad
\le C (1+r)^m
\sum_{i = 1}^{N_{m}} |\langle \xi , \xi + 2\eta \rangle|^{r-1-k_i}
 |\xi|^{\sum_{j=1}^{d} \omega_{i,j}} |\eta|^{\sum_{j=n+1}^{2d} \omega_{i,j}}. 
}
We modify the right-hand side in the following way
\eqns{
& \left|
\frac{\partial^{m} \left[ \langle \xi , \xi + 2\eta \rangle^{r-1} (\xi_k + 2\eta_k) \right]}{\partial^{\alpha_1}_{\xi_1}\dots\partial^{\alpha_d}_{\xi_d} \partial^{\beta_1}_{\eta_1}\dots\partial^{\beta_d}_{\eta_d} } 
\right| \\
& ~~ \le C(m) (1+r)^m
 \sum_{i = 1}^{N_{m}} \left( \frac{9}{7} \frac{|\langle \xi , \xi + 2\eta \rangle|}{1 + |\eta|^2} \right)^{r - 1 -k_i} \left(\frac{7}{9}(1 + |\eta|^2) \right)^{r - 1 - k_i}
 |\xi|^{\sum_{j=1}^{d} \omega_{i,j}} |\eta|^{\sum_{j=d+1}^{2d} \omega_{i,j}}.
}
Based on the fact that $r - 1 - k_i \geq 0$ and on Lemma \ref{ineqalityle1}  we have
\eqns{
& \left|
\frac{\partial^{m} \left[ \langle \xi , \xi + 2\eta \rangle^{r-1} (\xi_k + 2\eta_k) \right]}{\partial^{\alpha_1}_{\xi_1}\dots\partial^{\alpha_d}_{\xi_d} \partial^{\beta_1}_{\eta_1}\dots\partial^{\beta_d}_{\eta_d} } 
\right| 
\\ & ~~~~ \le
C(m)(1 + r)^m \left(\frac{7}{9} \right)^{r} \left(1 + |\eta|^2 \right)^{r - \frac{1}{2}} \sum_{i = 1}^{N_{m}} \frac{|\xi|^{\sum_{j=1}^{d} \omega_{i,j}} |\eta|^{\sum_{j=d+1}^{2d} \omega_{i,j}}}{\left( 1 + |\eta|^2 \right)^{k_i+\frac{1}{2}}}.
}
Using $\frac{1 + |\xi|^2}{1 + |\eta|^2} < \frac{1}{9}$ we have
\eqns{
& \left|
\frac{\partial^{m} \left[ \langle \xi , \xi + 2\eta \rangle^{r-1} (\xi_k + 2\eta_k) \right]}{\partial^{\alpha_1}_{\xi_1}\dots\partial^{\alpha_d}_{\xi_d} \partial^{\beta_1}_{\eta_1}\dots\partial^{\beta_d}_{\eta_d} } 
\right| 
\\ & \qquad \le
C(m)(1 + r)^m \left(\frac{7}{9} \right)^{r} \left(1 + |\eta|^2 \right)^{r - \frac{1}{2}}
 \sum_{i = 1}^{N_{m}} 
 \frac{\left(1 + |\xi|^2 + |\eta|^2 \right)^{\frac{1}{2}\sum_{j=1}^{2d} \omega_{i,j}}}{\left(1 + |\xi|^2 + |\eta|^2 \right)^{k_i + \frac{1}{2}}}.
}
Now using the fact that $ 2 k_i - \sum_{j=1}^{2d} \omega_{i,j} = m - 1 $ we get
\eqns{
\left|
\frac{\partial^{m} \left[ \langle \xi , \xi + 2\eta \rangle^{r-1} (\xi_k + 2\eta_k) \right]}{\partial^{\alpha_1}_{\xi_1}\dots\partial^{\alpha_d}_{\xi_d} \partial^{\beta_1}_{\eta_1}\dots\partial^{\beta_d}_{\eta_d} } 
\right|
& \le C(m)(1 + r)^m
\left(\frac{7}{9} \right)^{r} \frac{\left(1 + |\eta|^2 \right)^{r - \frac{1}{2}} }{\left(1 + |\xi|^2 + |\eta|^2  \right)^{\frac{m}{2}}} .
}
\end{proof}

\begin{lem} \label{technicalLemma3}
Let  $m \in \mathbb{N_+}$ and $\left\{ \alpha_{i} \right\}_{i = 1}^{d}, \left\{ \beta_{i} \right\}_{i = 1}^{d} \in \mathbb{N}^{d}$ such that $\sum_{i=1}^d \left(\alpha_i + \beta_i \right) = m$. Let $\Phi \in C^\infty(\mathbb{R})$ such that $ \supp \frac{\partial \Phi}{\partial x} \subset [a,b] $ for some $a, b > 0$. Then we have
\eqnsl{
\left| \frac{\partial^{m} \left[ 
\Phi \left( \frac{1 + |\xi|^2}{1 + |\eta|^2}  \right)
\right]}{\partial^{\alpha_1}_{\xi_1}\dots\partial^{\alpha_d}_{\xi_d} \partial^{\beta_1}_{\eta_1}\dots\partial^{\beta_d}_{\eta_d} } 
\right|
& \le
C \left( 1 + |\xi|^2 + |\eta|^2\right)^{-\frac{m}{2}}
}{technicalLemma3Eq0}
\end{lem}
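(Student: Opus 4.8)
The plan is to differentiate the composition $\Phi\circ t$, where $t=t(\xi,\eta):=\frac{1+|\xi|^2}{1+|\eta|^2}$, by the multivariate Faà di Bruno formula and then estimate each resulting term separately, the decisive point being that every surviving term is supported on the set $\{a\le t\le b\}$. Concretely, $\partial^{\alpha_1}_{\xi_1}\cdots\partial^{\beta_d}_{\eta_d}\big[\Phi(t)\big]$ is a finite linear combination, with combinatorial coefficients depending only on $d$ and $m$, of terms of the form $\Phi^{(k)}(t)\,\prod_{i=1}^{k}\partial^{\alpha^{(i)}}_{\xi}\partial^{\beta^{(i)}}_{\eta}t$, where $1\le k\le m$, each pair $(\alpha^{(i)},\beta^{(i)})$ satisfies $|\alpha^{(i)}|+|\beta^{(i)}|\ge 1$, and $\sum_{i=1}^{k}\big(|\alpha^{(i)}|+|\beta^{(i)}|\big)=m$. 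Since $m\ge 1$ there is no $k=0$ term, so every term carries a factor $\Phi^{(k)}$ with $k\ge 1$.

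The first ingredient I would establish is a pointwise bound for the derivatives of $t$. Because $t(\xi,\eta)=A(\xi)B(\eta)$ with $A(\xi)=(1+|\xi|^2)$ and $B(\eta)=(1+|\eta|^2)^{-1}$ splits into a product of a function of $\xi$ alone and a function of $\eta$ alone, any mixed derivative factors as $\partial^{\alpha'}_\xi\partial^{\beta'}_\eta t=(\partial^{\alpha'}_\xi A)(\partial^{\beta'}_\eta B)$, and Lemma \ref{technicalLemma1} applied with $s=1$ and with $s=-1$ gives $\big|\partial^{\alpha'}_\xi\partial^{\beta'}_\eta t\big|\le C\,(1+|\xi|^2)^{1-|\alpha'|/2}(1+|\eta|^2)^{-1-|\beta'|/2}$ on all of $\mathbb{R}^{2d}$.

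The second ingredient is the localization. For $k\ge 1$ the function $\Phi^{(k)}=(\Phi')^{(k-1)}$ is supported in $[a,b]$, so on $\{(\xi,\eta):\Phi^{(k)}(t)\ne 0\}$ one has $a\le t\le b$, i.e. $a(1+|\eta|^2)\le 1+|\xi|^2\le b(1+|\eta|^2)$; consequently $1+|\xi|^2$, $1+|\eta|^2$ and $1+|\xi|^2+|\eta|^2$ are all comparable, with constants depending only on $a,b$. Feeding this into the previous bound converts it into $\big|\partial^{\alpha^{(i)}}_\xi\partial^{\beta^{(i)}}_\eta t\big|\le C\,(1+|\xi|^2+|\eta|^2)^{-(|\alpha^{(i)}|+|\beta^{(i)}|)/2}$ on the support of $\Phi^{(k)}(t)$, since the exponents $1-|\alpha^{(i)}|/2$ and $-1-|\beta^{(i)}|/2$ add to $-(|\alpha^{(i)}|+|\beta^{(i)}|)/2$. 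Multiplying the $k$ factors and using $|\Phi^{(k)}(t)|\le\|\Phi^{(k)}\|_{L^\infty}$, each term is bounded by $C\,(1+|\xi|^2+|\eta|^2)^{-\sum_i(|\alpha^{(i)}|+|\beta^{(i)}|)/2}=C\,(1+|\xi|^2+|\eta|^2)^{-m/2}$; off the union of these supports the term vanishes identically, so the bound is global, and summing the finitely many terms yields (\ref{technicalLemma3Eq0}).

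The computations are all elementary; the only point that genuinely needs care is the interplay between the Faà di Bruno bookkeeping and the support constraint. Individually, the bound from Lemma \ref{technicalLemma1} is \emph{not} of the required form, because the factor $A(\xi)=1+|\xi|^2$ grows in $\xi$; it is precisely the observation that $m\ge 1$ forces a factor $\Phi^{(k)}$ with $k\ge 1$, hence the constraint $a\le t\le b$, which makes $1+|\xi|^2$ and $1+|\eta|^2$ comparable and upgrades the separate $\xi$- and $\eta$-estimates into the symmetric decay $(1+|\xi|^2+|\eta|^2)^{-m/2}$.
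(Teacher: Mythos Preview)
Your proof is correct and follows essentially the same approach as the paper: both express the derivative as a sum of terms each carrying a factor $\Phi^{(k)}$ with $k\ge 1$, use the support condition $\operatorname{supp}\Phi'\subset[a,b]$ to make $1+|\xi|^2$, $1+|\eta|^2$ and $1+|\xi|^2+|\eta|^2$ comparable, and then count exponents to obtain the $-m/2$ decay. The only difference is packaging: you invoke Fa\`a di Bruno together with the factorization $t=A(\xi)B(\eta)$ and Lemma~\ref{technicalLemma1}, whereas the paper proves an explicit representation formula~(\ref{technicalLemma3Eq1}) by induction on $m$; your route is a bit more streamlined, while the paper's is more self-contained.
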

\begin{proof}[Proof of Lemma \ref{technicalLemma3}]
First, we will show that there exists $N \in \mathbb{N}$, $\left\{ \omega_{i,j} \right\}_{i,j = 1}^{N,2d} \in \mathbb{N}^{N \times 2d}$, $\left\{ k_{i} \right\}_{i = 1}^{N}, \left\{ \kappa_{i,\xi} \right\}_{i = 1}^{N}, \left\{ \kappa_{i,\eta} \right\}_{i = 1}^{N} \in \mathbb{N}^{N}$, $\left\{ C_{i} \right\}_{i = 1}^{N} \in \mathbb{R}^{N}$ and  such that derivatives can be express in the following way:
\eqnsl{
& 
\frac{\partial^{m} \left[ 
\Phi \left( \frac{1 + |\xi|^2}{1 + |\eta|^2}  \right)
\right]}{\partial^{\alpha_1}_{\xi_1}\dots\partial^{\alpha_d}_{\xi_d} \partial^{\beta_1}_{\eta_1}\dots\partial^{\beta_d}_{\eta_d} } \\
& ~~~~~~ = 
\sum_{i = 1}^{N_{m}} C_i \Phi^{(k_i)}\left( \frac{1 + |\xi|^2}{1 + |\eta|^2}  \right)
\frac{( 1 + |\xi|^2)^{ \kappa_{i,\eta}}}{( 1 + |\eta|^2)^{ \kappa_{i,\xi} + \kappa_{i,\eta}}}
\xi_1^{\omega_{i,1}} \dots \xi_d^{\omega_{i,d}} \eta_1^{\omega_{i,d+1}} \dots \eta_d^{\omega_{i,2d}},
}{technicalLemma3Eq1}
where $1 \le k_i \le m$,  $2\kappa_{i,\xi} - \sum_{j=1}^{2d} \omega_{i,j} = m $.
We see that for $m=1$ such representation is valid. Now we assume that formula is valid for some number of differentiations and we will check if the formula is still valid after additional differentiation. We start with differentiation with respect to $\xi_j$:
\eqns{
& \frac{\partial^{m+1} \left[ 
\Phi \left( \frac{1 + |\xi|^2}{1 + |\eta|^2}  \right)
\right]}{\partial^{\alpha_1}_{\xi_1}\dots\partial^{\alpha_j+1}_{\xi_j}\dots\partial^{\alpha_d}_{\xi_d} \partial^{\beta_1}_{\eta_1}\dots\partial^{\beta_d}_{\eta_d} } 
\\ &
= 
\frac{\partial}{ \partial_{\xi_j}}
\left[\sum_{i = 1}^{N_{m}} C_i \Phi^{(k_i)}\left( \frac{1 + |\xi|^2}{1 + |\eta|^2}  \right)
\frac{( 1 + |\xi|^2)^{ \kappa_{i,\eta}}}{( 1 + |\eta|^2)^{ \kappa_{i,\xi} + \kappa_{i,\eta}}}
\xi_1^{\omega_{i,1}} \dots \xi_d^{\omega_{i,d}} \eta_1^{\omega_{i,d+1}} \dots \eta_d^{\omega_{i,2d}}
\right].
}
After caring out the differentiation we get
\eqns{
& \frac{\partial^{m+1} \left[ 
\Phi \left( \frac{1 + |\xi|^2}{1 + |\eta|^2}  \right)
\right]}{\partial^{\alpha_1}_{\xi_1}\dots\partial^{\alpha_j+1}_{\xi_j}\dots\partial^{\alpha_d}_{\xi_d} \partial^{\beta_1}_{\eta_1}\dots\partial^{\beta_d}_{\eta_d} } \\ &
= 
\sum_{i = 1}^{N_{m}} 2 C_i \Phi^{(k_i+1)}\left( \frac{1 + |\xi|^2}{1 + |\eta|^2}  \right)
\frac{( 1 + |\xi|^2)^{ \kappa_{i,\eta}}}{( 1 + |\eta|^2)^{ \kappa_{i,\xi} + \kappa_{i,\eta} + 1}}
\xi_j \prod_{k=1}^d \xi_k^{\omega_{i,k}} \eta_k^{\omega_{i,d+k}} \\
& + 
\sum_{i = 1}^{N_{m}} C_i 2 \kappa_{i,\eta} \Phi^{(k_i)}\left( \frac{1 + |\xi|^2}{1 + |\eta|^2}  \right)
\frac{( 1 + |\xi|^2)^{ \kappa_{i,\eta} - 1}}{( 1 + |\eta|^2)^{ \kappa_{i,\xi} + 1 + \kappa_{i,\eta} - 1}}
\xi_j \prod_{k=1}^d \xi_k^{\omega_{i,k}} \eta_k^{\omega_{i,d+k}} \\
& + 
\sum_{i = 1}^{N_{m}} C_i \omega_{i,j}  \Phi^{(k_i)}\left( \frac{1 + |\xi|^2}{1 + |\eta|^2}  \right)
\frac{( 1 + |\xi|^2)^{ \kappa_{i,\eta}}}{( 1 + |\eta|^2)^{ \kappa_{i,\xi} + \kappa_{i,\eta}}}
\xi_j^{-1} \prod_{k=1}^d \xi_k^{\omega_{i,k}} \eta_k^{\omega_{i,d+k}}
}
Obtained formula matches the structure from equation (\ref{technicalLemma3Eq1}). Now let us check the validity after additional differentiation with respect to $\eta_j$:
\eqns{
& \frac{\partial^{m+1} \left[ 
\Phi \left( \frac{1 + |\xi|^2}{1 + |\eta|^2}  \right)
\right]}{\partial^{\alpha_1}_{\xi_1}\dots\partial^{\alpha_d}_{\xi_d} \partial^{\beta_1}_{\eta_1}\dots\partial^{\beta_j+1}_{\eta_j}\dots\partial^{\beta_d}_{\eta_d} } 
\\ &
= 
\frac{\partial}{ \partial_{\eta_j}}
\left[\sum_{i = 1}^{N_{m}} C_i \Phi^{(k_i)}\left( \frac{1 + |\xi|^2}{1 + |\eta|^2}  \right)
\frac{( 1 + |\xi|^2)^{ \kappa_{i,\eta}}}{( 1 + |\eta|^2)^{ \kappa_{i,\xi} + \kappa_{i,\eta}}}
\xi_1^{\omega_{i,1}} \dots \xi_d^{\omega_{i,d}} \eta_1^{\omega_{i,d+1}} \dots \eta_d^{\omega_{i,2d}}
\right].
}
After caring out the differentiation we get
\eqns{
& \frac{\partial^{m+1} \left[ 
\Phi \left( \frac{1 + |\xi|^2}{1 + |\eta|^2}  \right)
\right]}{\partial^{\alpha_1}_{\xi_1}\dots\partial^{\alpha_d}_{\xi_d} \partial^{\beta_1}_{\eta_1}\dots\partial^{\beta_j+1}_{\eta_j}\dots\partial^{\beta_d}_{\eta_d} } 
\\ &
\quad = 
\sum_{i = 1}^{N_{m}} (-2) C_i \Phi^{(k_i+1)}\left( \frac{1 + |\xi|^2}{1 + |\eta|^2}  \right)
\frac{( 1 + |\xi|^2)^{ \kappa_{i,\eta}+1}}{( 1 + |\eta|^2)^{ \kappa_{i,\xi} + 1 + \kappa_{i,\eta} + 1}}
\eta_j \prod_{k=1}^d \xi_k^{\omega_{i,k}} \eta_k^{\omega_{i,d+k}} \\
& \quad \phantom{=} + 
\sum_{i = 1}^{N_{m}} C_i (-2) (\kappa_{i,\eta} + \kappa_{i,\xi}) \Phi^{(k_i)}\left( \frac{1 + |\xi|^2}{1 + |\eta|^2}  \right)
\frac{( 1 + |\xi|^2)^{ \kappa_{i,\eta}}}{( 1 + |\eta|^2)^{ \kappa_{i,\xi} + 1 + \kappa_{i,\eta}}}
\eta_j \prod_{k=1}^d \xi_k^{\omega_{i,k}} \eta_k^{\omega_{i,d+k}} \\
& \quad \phantom{=} + 
\sum_{i = 1}^{N_{m}} C_i \omega_{i,d+j}  \Phi^{(k_i)}\left( \frac{1 + |\xi|^2}{1 + |\eta|^2}  \right)
\frac{( 1 + |\xi|^2)^{ \kappa_{i,\eta}}}{( 1 + |\eta|^2)^{ \kappa_{i,\xi} + \kappa_{i,\eta}}}
\eta_j^{-1} \prod_{k=1}^d \xi_k^{\omega_{i,k}} \eta_k^{\omega_{i,d+k}}.
}
Again we see that structure of (\ref{technicalLemma3Eq1}) is preserved after differentiation. Now, we can finally prove the estimate (\ref{technicalLemma3Eq0}). First, let us observe that
\eqns{
\left| \frac{\partial^{m} \left[ 
\Phi \left( \frac{1 + |\xi|^2}{1 + |\eta|^2}  \right)
\right]}{\partial^{\alpha_1}_{\xi_1}\dots\partial^{\alpha_d}_{\xi_d} \partial^{\beta_1}_{\eta_1}\dots\partial^{\beta_d}_{\eta_d} } 
\right|
=0 
~~~~~\text{ for } (\xi, \eta): \frac{1 + |\xi|^2}{1 + |\eta|^2} \le b \text{ or } \frac{1 + |\xi|^2}{1 + |\eta|^2} \geq a
}
and thus we will focus on the case when $ a < \frac{1 + |\xi|^2}{1 + |\eta|^2} < b$:
\eqns{
\left| \frac{\partial^{m} \left[ 
\Phi \left( \frac{1 + |\xi|^2}{1 + |\eta|^2}  \right)
\right]}{\partial^{\alpha_1}_{\xi_1}\dots\partial^{\alpha_d}_{\xi_d} \partial^{\beta_1}_{\eta_1}\dots\partial^{\beta_d}_{\eta_d} } 
\right|
& \le
C \sum_{i = 1}^{N_{m}}
\frac{( 1 + |\xi|^2)^{ \kappa_{i,\eta}}}{( 1 + |\eta|^2)^{ \kappa_{i,\xi} + \kappa_{i,\eta}}}
|\xi_1|^{\omega_{i,1}} \dots \xi_d^{\omega_{i,d}} \eta_1^{\omega_{i,d+1}} \dots \eta_d^{\omega_{i,2d}} \\
& \le
C \sum_{i = 1}^{N_{m}}
\frac{( 1 + |\xi|^2 + |\eta|^2)^{\frac{1}{2} \sum_{j=1}^{2d} \omega_{i,j}}}{( 1 + |\eta|^2)^{ \kappa_{i,\xi}}}.
}
Now using the inequality $1 + |\eta|^2 \geq \frac{1}{2} \n{1 + |\eta|^2} + \frac{1}{2b}\n{1 + |\xi|^2} \geq \min \left\{\frac{1}{2}, \frac{1}{2b} \right\} \n{1 + |\eta|^2 + |\xi|^2} $ to obtain
\eqns{
\left| \frac{\partial^{m} \left[ 
\Phi \left( \frac{1 + |\xi|^2}{1 + |\eta|^2}  \right)
\right]}{\partial^{\alpha_1}_{\xi_1}\dots\partial^{\alpha_d}_{\xi_d} \partial^{\beta_1}_{\eta_1}\dots\partial^{\beta_d}_{\eta_d} } 
\right|
& \le
C \sum_{i = 1}^{N_{m}}
\left( 1 + |\xi|^2 + |\eta|^2\right)^{\frac{1}{2}\left(-2 \kappa_{1,\xi} +  \sum_{j=1}^{2d} \omega_{i,j} \right)} \\
& \le
C \left( 1 + |\xi|^2 + |\eta|^2\right)^{-\frac{m}{2}}.
}
\end{proof}
\begin{lem} \label{technicalLemma4}
Let $s \in \mathbb{C}$ such that $\re s \geq 0$, $m \in \mathbb{N_+}$ and $\left\{ \alpha_{i} \right\}_{i = 1}^{d}, \left\{ \beta_{i} \right\}_{i = 1}^{d} \in \mathbb{N}^{d}$ such that $\sum_{i=1}^d (\alpha_i + \beta_i) = m$. Then, there exists $N \in \mathbb{N}$, $\left\{ \omega_{i,j} \right\}_{i,j = 1}^{N,2d} \in \mathbb{N}^{dN}$, $\left\{ k_{i} \right\}_{i = 1}^{N} \in \mathbb{N}^{N}$, $\left\{ C_{i} \right\}_{i = 1}^{N} \in \mathbb{C}^{N}$ and  such that
\eqnsl{
\frac{\partial^{m} \left[ (1+|\xi + \eta|^2)^{s}\right]}{\partial^{\alpha_1}_{\xi_1}\dots\partial^{\alpha_d}_{\xi_d} \partial^{\beta_1}_{\eta_1}\dots\partial^{\beta_d}_{\eta_d} } 
= 
\sum_{i = 1}^{N} C_i(s) (1+|\xi + \eta|^2)^{s-k_i}
 \eta_1^{\omega_{i,1}} \dots \eta_d^{\omega_{i,d}}
 \xi_1^{\omega_{i,d+1}} \dots \xi_d^{\omega_{i,2d}},  
}{technicalLemma4Eq1}
where $\forall i \in \{ 1, \dots, N \}$  $0 \le k_i \le m$,  $2k_i - \sum_{j=1}^{2d} \omega_{i,j} = m$ and $|C_i(s)| \le \overline{C} (1 + |s|)^{k_i}$.
Also, we have 
\eqnsl{
\left| \frac{\partial^{m} \left[ (1+| \xi + \eta|^2)^{s} \right]}{\partial^{\alpha_1}_{\xi_1}\dots\partial^{\alpha_d}_{\xi_d} \partial^{\beta_1}_{\eta_1}\dots\partial^{\beta_d}_{\eta_d} } \right|
\le 
C(1+|s|)^m (1 + |\xi|^2 + |\eta|^2)^{\re s - \frac{m}{2}} 
~~~~
\forall (\xi, \eta) \in U,
}{technicalLemma4Eq2}
where $U = \left\{ (\xi,\eta): \frac{1+|\xi|^2}{1+|\eta|^2} > 9 ~ \text{or} ~ \frac{1+|\xi|^2}{1+|\eta|^2} < \frac{1}{9}  \right\}$. If $\re s - m \geq 0$, then inequality (\ref{technicalLemma4Eq2}) holds for $(\xi,\eta) \in \mathbb{R}^{2d}$.
\end{lem}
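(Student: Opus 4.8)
The plan is to prove the three assertions in order: the representation formula (\ref{technicalLemma4Eq1}), the estimate (\ref{technicalLemma4Eq2}) on the set $U$, and then the estimate on all of $\mathbb{R}^{2d}$ under the hypothesis $\re s - m \geq 0$. For the representation formula the key observation is that $\partial_{\xi_j}$ and $\partial_{\eta_j}$ act identically on any function of $\zeta := \xi+\eta$, so writing $g(\zeta) := (1+|\zeta|^2)^s$ one has
\[
\frac{\partial^{m}\left[(1+|\xi+\eta|^2)^{s}\right]}{\partial^{\alpha_1}_{\xi_1}\cdots\partial^{\alpha_d}_{\xi_d}\partial^{\beta_1}_{\eta_1}\cdots\partial^{\beta_d}_{\eta_d}}
=\left(\frac{\partial^{m}g}{\partial^{\gamma_1}_{\zeta_1}\cdots\partial^{\gamma_d}_{\zeta_d}}\right)(\xi+\eta),
\qquad \gamma_j:=\alpha_j+\beta_j,\quad |\gamma|=m.
\]
Applying Lemma \ref{technicalLemma1} to $g$ expresses the right-hand side as $\sum_i C_i(1+|\xi+\eta|^2)^{s-k_i}(\xi_1+\eta_1)^{\omega_{i,1}}\cdots(\xi_d+\eta_d)^{\omega_{i,d}}$ with $0\le k_i\le m$ and $2k_i-\sum_j\omega_{i,j}=m$; expanding each factor $(\xi_j+\eta_j)^{\omega_{i,j}}$ by the binomial theorem and relabelling gives exactly (\ref{technicalLemma4Eq1}), the total degree of the monomial --- hence the relation $2k_i-\sum\omega_{i,j}=m$ --- being unchanged by the expansion and the binomial coefficients being absorbed into the constants. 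The sharper bound $|C_i(s)|\le\overline{C}(1+|s|)^{k_i}$ would be obtained by tracking the $s$-dependence inside the induction of Lemma \ref{technicalLemma1}: a factor $s-k$ is produced exactly when the exponent drops from $s-k$ to $s-k-1$, so reaching the exponent $s-k_i$ accumulates an $s$-factor of modulus $\le C(m)(1+|s|)^{k_i}$.

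The heart of the matter is the two-sided comparison $1+|\xi+\eta|^2\approx 1+|\xi|^2+|\eta|^2$ on $U$, with constants depending only on $d$. The upper bound $1+|\xi+\eta|^2\le 2(1+|\xi|^2+|\eta|^2)$ is trivial; for the lower bound I would argue by symmetry and treat the case $\frac{1+|\xi|^2}{1+|\eta|^2}<\frac19$ (the case $>9$ is identical after swapping $\xi$ and $\eta$). This inequality forces $|\eta|^2>8$, hence $\sqrt{1+|\eta|^2}\le\sqrt2\,|\eta|$ and $|\xi|<\tfrac13\sqrt{1+|\eta|^2}\le\tfrac{\sqrt2}{3}|\eta|$, so that $|\xi+\eta|\ge |\eta|-|\xi|\ge(1-\tfrac{\sqrt2}{3})|\eta|$, while $1+|\xi|^2+|\eta|^2\le\tfrac{10}{9}(1+|\eta|^2)\le\tfrac{20}{9}|\eta|^2$; combining these yields the desired lower comparison $1+|\xi+\eta|^2\gtrsim 1+|\xi|^2+|\eta|^2$ on $U$.

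Granting this comparison, I substitute (\ref{technicalLemma4Eq1}) into the modulus and estimate each summand: by the comparison one has $(1+|\xi+\eta|^2)^{\re s-k_i}\le C(1+|\xi|^2+|\eta|^2)^{\re s-k_i}$ (using the upper bound when this exponent is $\ge 0$ and the lower bound when it is $<0$), and by the degree relation $|\xi_1^{\omega_{i,1}}\cdots\xi_d^{\omega_{i,d}}\eta_1^{\omega_{i,d+1}}\cdots\eta_d^{\omega_{i,2d}}|\le(1+|\xi|^2+|\eta|^2)^{\frac12\sum_j\omega_{i,j}}=(1+|\xi|^2+|\eta|^2)^{k_i-m/2}$. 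Multiplying, the exponents $k_i$ cancel, each term is bounded by $C(1+|\xi|^2+|\eta|^2)^{\re s-m/2}$, and summing the finitely many terms together with the factor $(1+|s|)^m$ from the representation step produces (\ref{technicalLemma4Eq2}).

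Finally, if $\re s-m\ge 0$ then $\re s-k_i\ge\re s-m\ge 0$ for every $i$ (since $k_i\le m$), so the exponent of $1+|\xi+\eta|^2$ in each summand is nonnegative and the trivial bound $1+|\xi+\eta|^2\le 2(1+|\xi|^2+|\eta|^2)$ alone suffices --- no appeal to $U$ is needed --- while the monomial estimate is valid on all of $\mathbb{R}^{2d}$; repeating the computation above gives (\ref{technicalLemma4Eq2}) for all $(\xi,\eta)\in\mathbb{R}^{2d}$. I expect the only genuine obstacle to be the lower comparison $1+|\xi+\eta|^2\gtrsim 1+|\xi|^2+|\eta|^2$ on $U$: outside $U$ the vectors $\xi$ and $-\eta$ may have comparable length and be nearly aligned, so $|\xi+\eta|$ can be far smaller than $|\xi|+|\eta|$, and it is precisely the ``factor-$3$ separation'' encoded in the definition of $U$ that rules out this cancellation. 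Everything else is bookkeeping built on Lemma \ref{technicalLemma1} and elementary inequalities.
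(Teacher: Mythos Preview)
Your proposal is correct and follows essentially the same route as the paper. The only noteworthy difference is in obtaining the representation formula (\ref{technicalLemma4Eq1}): the paper simply remarks that the induction of Lemma \ref{technicalLemma1} carries over verbatim, whereas you reduce to Lemma \ref{technicalLemma1} via the chain-rule observation $\partial_{\xi_j}=\partial_{\eta_j}$ on functions of $\xi+\eta$ and then binomial-expand $(\xi_j+\eta_j)^{\omega_{i,j}}$. This is a perfectly legitimate (arguably cleaner) variant; the degree relation and the $(1+|s|)^{k_i}$ bound survive the expansion exactly as you say. For the estimate on $U$ both arguments hinge on the same lower comparison $1+|\xi+\eta|^2 \gtrsim 1+|\xi|^2+|\eta|^2$, derived from the factor-$3$ separation in $U$; the paper writes it as $1+|\xi+\eta|^2 \ge \tfrac{1}{3}(1+|\xi|^2+|\eta|^2)$ via $|\eta| < \tfrac{1}{3}\sqrt{|\xi|^2-8}$, while your inequalities give a slightly different constant, but the mechanism is identical. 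The case $\re s - m \ge 0$ is handled the same way in both.
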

\begin{proof} [Proof of Lemma \ref{technicalLemma4}]
The proof of representation formula (\ref{technicalLemma4Eq1}) follows the same as in Lemma \ref{technicalLemma1}, thus we will concentrate only on inequality.
We get  
\eqnsl{
\left| \frac{\partial^{m} \left[ (1+|\xi + \eta|^2)^{s} \right]}{\partial^{\alpha_1}_{\xi_1}\dots\partial^{\alpha_d}_{\xi_d} \partial^{\beta_1}_{\eta_1}\dots\partial^{\beta_d}_{\eta_d}}  \right|
& \le 
C (1 + |s|)^m \sum_{i=1}^{N_{m}} 
\frac{\n{1 + |\eta|^2 + |\xi|^2 }^{\re s + \frac{1}{2}\sum_{j=1}^{2d} \omega_{i,j}}}{(1+|\xi + \eta|^2)^{k_i}}.
}{technicalLemma4Eq3}
Let us observe that from $1+|\xi|^2 > 9 \left( 1 + |\eta|^2 \right)$ we can derive $-\frac{1}{3} \sqrt{|\xi|^2 - 8} < -|\eta|$. Thus we have
\eqns{
1 + |\xi + \eta|^2 
& = 1 + |\xi|^2 + |\eta|^2 + 2 \langle \xi , \eta \rangle
\geq 1 + |\xi|^2 + |\eta|^2 - 2 |\xi| |\eta| 
\\ & 
\geq 1 + |\xi|^2 + |\eta|^2 - \frac{2}{3} |\xi| \sqrt{|\xi|^2 - 8} 
\geq \frac{1}{3} \n{1 + |\xi|^2 + |\eta|^2}.
}
Thus we get 
\eqns{
\left| \frac{\partial^{m} \left[ (1+|\xi + \eta|^2)^{s} \right]}{\partial^{\alpha_1}_{\eta_1}\dots\partial^{\alpha_d}_{\eta_d} \partial^{\beta_1}_{\eta_1}\dots\partial^{\beta_d}_{\eta_d} }  \right|
& \le 
C (1 + |s|)^m \sum_{i=1}^{N_{m}} 
\n{1 + |\eta|^2 + |\xi|^2 }^{\re s - \frac{1}{2}(2k_i - \sum_{j=1}^{2d} \omega_{i,j})}.
}
Using the fact that $2k_i - \sum_{j=1}^{2d} \omega_{i,j} = m$, we get the desired inequality. The other case $1+|\xi|^2 < \frac{1}{9} \left( 1 + |\eta|^2 \right)$ is analogous. Now, let us assume that $\re s - m \geq 0$. Thus from (\ref{technicalLemma4Eq1}) we have
\eqns{ \hspace{-2mm}
\left|
\frac{\partial^{m} \left[ (1+|\xi + \eta|^2)^{s}\right]}{\partial^{\alpha_1}_{\xi_1}\dots\partial^{\alpha_d}_{\xi_d} \partial^{\beta_1}_{\eta_1}\dots\partial^{\beta_d}_{\eta_d}} 
\right|
\le C (1+|s|)^m
\sum_{i = 1}^{N} (1+|\xi + \eta|^2)^{\re s-k_i}
\n{1 + |\eta|^2 + |\xi|^2 }^{\frac{1}{2}\sum_{j=1}^{2d} \omega_{i,j}} .
}
As $k_i \le m$ we see that $\re s - k_i \geq 0$ and thus we have
\eqns{
\left|
\frac{\partial^{m} \left[ (1+|\xi + \eta|^2)^{s}\right]}{\partial^{\alpha_1}_{\xi_1}\dots\partial^{\alpha_d}_{\xi_d} \partial^{\beta_1}_{\eta_1}\dots\partial^{\beta_d}_{\eta_d}} 
\right|
\le C (1+|s|)^m
\sum_{i = 1}^{N} 
\n{1 + |\eta|^2 + |\xi|^2 }^{\re s - \frac{1}{2}(2k_i - \sum_{j=1}^{2d} \omega_{i,j})}. 
}
Again, using the fact that $2k_i - \sum_{j=1}^{2d} \omega_{i,j} = m$, we obtain the desired inequality.
\end{proof}

\end{document}